\renewcommand\vspace[1]{#1}
\renewcommand \thepart{}
\renewcommand \partname{}
\newcommand{\lyxdot}{_}
\theoremstyle{plain}
\newtheorem{lem}{\protect\lemmaname}
\theoremstyle{remark}
\newtheorem{rem}{\protect\remarkname}
\theoremstyle{plain}
\newtheorem{thm}{\protect\theoremname}
\theoremstyle{plain}
\newtheorem{prop}{\protect\propositionname}
\providecommand{\corollaryname}{Corollary}
\theoremstyle{plain}
\theoremstyle{plain}
\newtheorem{exple}{\protect\examplename}
\providecommand{\propositionname}{Proposition}
\newcommand\myquad{}
\newcommand\condspace{{\ifthenelse{\boolean{doublecolumn}}{}{\\}}}
\newcommand\condsep{{\ifthenelse{\boolean{doublecolumn}}{\vspace{-\topsep}}{}}}
\newcommand\myfrac[2]{\frac{#1}{#2}}
\newcommand\red[1]{\textcolor{red}{#1}}
\newcites{app}{References in appendix}
\providecommand{\lemmaname}{Lemma}
\providecommand{\remarkname}{Remark}
\providecommand{\theoremname}{Theorem}
\providecommand{\examplename}{Example}
\begin{document}

\global\long\def\inprod#1#2{\left\langle #1,#2\right\rangle }%

\global\long\def\inner#1#2{\langle#1,#2\rangle}%

\global\long\def\binner#1#2{\big\langle#1,#2\big\rangle}%

\global\long\def\Binner#1#2{\Big\langle#1,#2\Big\rangle}%

\global\long\def\norm#1{\lVert#1\rVert}%

\global\long\def\bnorm#1{\big\Vert#1\big\Vert}%

\global\long\def\Bnorm#1{\Big\Vert#1\Big\Vert}%

\global\long\def\setnorm#1{\Vert#1\Vert_{-}}%

\global\long\def\bsetnorm#1{\big\Vert#1\big\Vert_{-}}%

\global\long\def\Bsetnorm#1{\Big\Vert#1\Big\Vert_{-}}%


\global\long\def\brbra#1{\big(#1\big)}%
\global\long\def\Brbra#1{\Big(#1\Big)}%
\global\long\def\rbra#1{(#1)}%


\global\long\def\sbra#1{[#1]}%
\global\long\def\bsbra#1{\big[#1\big]}%
\global\long\def\Bsbra#1{\Big[#1\Big]}%


\global\long\def\cbra#1{\{#1\}}%
\global\long\def\bcbra#1{\big\{#1\big\}}%
\global\long\def\Bcbra#1{\Big\{#1\Big\}}%
\global\long\def\vertiii#1{\left\vert \kern-0.25ex  \left\vert \kern-0.25ex  \left\vert #1\right\vert \kern-0.25ex  \right\vert \kern-0.25ex  \right\vert }%

\global\long\def\matr#1{\bm{#1}}%

\global\long\def\til#1{\tilde{#1}}%

\global\long\def\wtil#1{\widetilde{#1}}%

\global\long\def\wh#1{\widehat{#1}}%

\global\long\def\mcal#1{\mathcal{#1}}%

\global\long\def\mbb#1{\mathbb{#1}}%

\global\long\def\mtt#1{\mathtt{#1}}%

\global\long\def\ttt#1{\texttt{#1}}%

\global\long\def\dtxt{\textrm{d}}%

\global\long\def\bignorm#1{\bigl\Vert#1\bigr\Vert}%

\global\long\def\Bignorm#1{\Bigl\Vert#1\Bigr\Vert}%

\global\long\def\rmn#1#2{\mathbb{R}^{#1\times#2}}%

\global\long\def\deri#1#2{\frac{d#1}{d#2}}%

\global\long\def\pderi#1#2{\frac{\partial#1}{\partial#2}}%

\global\long\def\limk{\lim_{k\rightarrow\infty}}%

\global\long\def\trans{\textrm{T}}%

\global\long\def\onebf{\mathbf{1}}%

\global\long\def\zerobf{\mathbf{0}}%

\global\long\def\zero{\bm{0}}%


\global\long\def\Euc{\mathrm{E}}%

\global\long\def\Expe{\mathbb{E}}%

\global\long\def\rank{\mathrm{rank}}%

\global\long\def\range{\mathrm{range}}%

\global\long\def\diam{\mathrm{diam}}%

\global\long\def\epi{\mathrm{epi} }%

\global\long\def\relint{\mathrm{relint} }%

\global\long\def\inte{\operatornamewithlimits{int}}%

\global\long\def\cov{\mathrm{Cov}}%

\global\long\def\argmin{\operatornamewithlimits{arg\,min}}%

\global\long\def\argmax{\operatornamewithlimits{arg\,max}}%

\global\long\def\tr{\operatornamewithlimits{tr}}%

\global\long\def\dis{\operatornamewithlimits{dist}}%

\global\long\def\sign{\operatornamewithlimits{sign}}%

\global\long\def\prob{\mathrm{Prob}}%

\global\long\def\spans{\textrm{span}}%

\global\long\def\st{\operatornamewithlimits{s.t.}}%

\global\long\def\dom{\mathrm{dom}}%

\global\long\def\prox{\mathrm{prox}}%

\global\long\def\for{\mathrm{for}}%

\global\long\def\diag{\mathrm{diag}}%

\global\long\def\and{\mathrm{and}}%

\global\long\def\st{\mathrm{s.t.}}%

\global\long\def\dist{\mathrm{dist}}%

\global\long\def\Var{\operatornamewithlimits{Var}}%

\global\long\def\raw{\rightarrow}%

\global\long\def\law{\leftarrow}%

\global\long\def\Raw{\Rightarrow}%

\global\long\def\Law{\Leftarrow}%

\global\long\def\vep{\varepsilon}%

\global\long\def\dom{\operatornamewithlimits{dom}}%

\global\long\def\tsum{{\textstyle {\sum}}}%

\global\long\def\Cbb{\mathbb{C}}%

\global\long\def\Ebb{\mathbb{E}}%

\global\long\def\Fbb{\mathbb{F}}%

\global\long\def\Nbb{\mathbb{N}}%

\global\long\def\Rbb{\mathbb{R}}%

\global\long\def\extR{\widebar{\mathbb{R}}}%

\global\long\def\Pbb{\mathbb{P}}%

\global\long\def\Mrm{\mathrm{M}}%

\global\long\def\Acal{\mathcal{A}}%

\global\long\def\Bcal{\mathcal{B}}%

\global\long\def\Ccal{\mathcal{C}}%

\global\long\def\Dcal{\mathcal{D}}%

\global\long\def\Ecal{\mathcal{E}}%

\global\long\def\Fcal{\mathcal{F}}%

\global\long\def\Gcal{\mathcal{G}}%

\global\long\def\Hcal{\mathcal{H}}%

\global\long\def\Ical{\mathcal{I}}%

\global\long\def\Kcal{\mathcal{K}}%

\global\long\def\Lcal{\mathcal{L}}%

\global\long\def\Mcal{\mathcal{M}}%

\global\long\def\Ncal{\mathcal{N}}%

\global\long\def\Ocal{\mathcal{O}}%

\global\long\def\Pcal{\mathcal{P}}%

\global\long\def\Scal{\mathcal{S}}%

\global\long\def\Tcal{\mathcal{T}}%

\global\long\def\Wcal{\mathcal{W}}%

\global\long\def\Xcal{\mathcal{X}}%

\global\long\def\Ycal{\mathcal{Y}}%

\global\long\def\Zcal{\mathcal{Z}}%

\global\long\def\i{i}%


\global\long\def\abf{\mathbf{a}}%

\global\long\def\bbf{\mathbf{b}}%

\global\long\def\cbf{\mathbf{c}}%

\global\long\def\dbf{\mathbf{d}}%

\global\long\def\fbf{\mathbf{f}}%

\global\long\def\lambf{\bm{\lambda}}%

\global\long\def\alphabf{\bm{\alpha}}%

\global\long\def\sigmabf{\bm{\sigma}}%

\global\long\def\thetabf{\bm{\theta}}%

\global\long\def\deltabf{\bm{\delta}}%

\global\long\def\sbf{\mathbf{s}}%

\global\long\def\lbf{\mathbf{l}}%

\global\long\def\ubf{\mathbf{u}}%

\global\long\def\vbf{\mathbf{v}}%

\global\long\def\wbf{\mathbf{w}}%

\global\long\def\xbf{\mathbf{x}}%

\global\long\def\ybf{\mathbf{y}}%

\global\long\def\zbf{\mathbf{z}}%

\global\long\def\Abf{\mathbf{A}}%

\global\long\def\Ubf{\mathbf{U}}%

\global\long\def\Pbf{\mathbf{P}}%

\global\long\def\Ibf{\mathbf{I}}%

\global\long\def\Ebf{\mathbf{E}}%

\global\long\def\Mbf{\mathbf{M}}%

\global\long\def\Qbf{\mathbf{Q}}%

\global\long\def\Lbf{\mathbf{L}}%

\global\long\def\Pbf{\mathbf{P}}%


\global\long\def\abm{\bm{a}}%

\global\long\def\bbm{\bm{b}}%

\global\long\def\cbm{\bm{c}}%

\global\long\def\dbm{\bm{d}}%

\global\long\def\ebm{\bm{e}}%

\global\long\def\fbm{\bm{f}}%

\global\long\def\gbm{\bm{g}}%

\global\long\def\hbm{\bm{h}}%

\global\long\def\pbm{\bm{p}}%

\global\long\def\qbm{\bm{q}}%

\global\long\def\rbm{\bm{r}}%

\global\long\def\sbm{\bm{s}}%

\global\long\def\tbm{\bm{t}}%

\global\long\def\ubm{\bm{u}}%

\global\long\def\vbm{\bm{v}}%

\global\long\def\wbm{\bm{w}}%

\global\long\def\xbm{\bm{x}}%

\global\long\def\ybm{\bm{y}}%

\global\long\def\zbm{\bm{z}}%

\global\long\def\Abm{\bm{A}}%

\global\long\def\Bbm{\bm{B}}%

\global\long\def\Cbm{\bm{C}}%

\global\long\def\Dbm{\bm{D}}%

\global\long\def\Ebm{\bm{E}}%

\global\long\def\Fbm{\bm{F}}%

\global\long\def\Gbm{\bm{G}}%

\global\long\def\Hbm{\bm{H}}%

\global\long\def\Ibm{\bm{I}}%

\global\long\def\Jbm{\bm{J}}%

\global\long\def\Lbm{\bm{L}}%

\global\long\def\Obm{\bm{O}}%

\global\long\def\Pbm{\bm{P}}%

\global\long\def\Qbm{\bm{Q}}%

\global\long\def\Rbm{\bm{R}}%

\global\long\def\Ubm{\bm{U}}%

\global\long\def\Vbm{\bm{V}}%

\global\long\def\Wbm{\bm{W}}%

\global\long\def\Xbm{\bm{X}}%

\global\long\def\Ybm{\bm{Y}}%

\global\long\def\Zbm{\bm{Z}}%

\global\long\def\lambm{\bm{\lambda}}%

\global\long\def\alphabm{\bm{\alpha}}%

\global\long\def\albm{\bm{\alpha}}%

\global\long\def\taubm{\bm{\tau}}%

\global\long\def\mubm{\bm{\mu}}%

\global\long\def\yrm{\mathrm{y}}%

\global\long\def\sgm{\texttt{SGD}}%
\global\long\def\sgd{\texttt{SGD}}%
\global\long\def\spl{\texttt{SPL}}%
\global\long\def\dspl{\texttt{DSPL}}%
\global\long\def\dsepl{\texttt{DSEPL}}%
\global\long\def\dsgd{\texttt{DSGD}}%
\global\long\def\dsegd{\texttt{DSEGD}}%
\global\long\def\dsbpl{\texttt{DSBPL}}%
\global\long\def\dsmd{\texttt{DSMD}}%

\global\long\def\assign{\coloneqq}%
\global\long\def\tmstrong#1{{\bf #1}}%
\global\long\def\tmem#1{#1}%
\global\long\def\tmop#1{#1}%

\global\long\def\tmverbatim#1{#1}%
\global\long\def\revised#1{{#1}}%
\global\long\def\changed#1{\textcolor{blue}{#1}}%

\newtheorem{definition}{Definition}
\newcommand{\TODO}[1]{\textbf{\textcolor{red}{TODO: #1}}}

\title{ {\bf \LARGE Delayed Stochastic Algorithms for Distributed Weakly Convex
Optimization}}

\setlength{\parindent}{0pt}

\author{Wenzhi Gao\thanks{Stanford University. Email: \texttt{gwz@stanford.edu}}
    \hspace{2.5cm}
Qi Deng\thanks{Shanghai University of Finance and Economics. Email: \texttt{qideng@sufe.edu.cn}}
}

\maketitle

\setlength{\parindent}{0pt}
\begin{abstract}
This paper studies delayed stochastic algorithms for weakly convex optimization in a distributed network with workers connected to a master node. 
Recently, Xu~et~al.~2022  showed that an inertial stochastic subgradient method   converges at a rate of $\mathcal{O}(\tau_{\text{max}}/\sqrt{K})$ which depends on the maximum information delay $\tau_{\text{max}}$. 
In this work, we show that the delayed stochastic subgradient method ($\texttt{DSGD}$) obtains a tighter convergence rate which depends on the expected delay $\bar{\tau}$. Furthermore, for an important class of composition weakly convex problems, we develop a new delayed stochastic prox-linear ($\texttt{DSPL}$) method in which the delays only affect the high-order term in the complexity rate and hence,  are negligible after a certain number of $\texttt{DSPL}$  iterations.  
In addition, we demonstrate the robustness of our proposed algorithms against arbitrary delays.  By incorporating a simple safeguarding step in both methods, we achieve convergence rates that depend solely on the number of workers, eliminating the effect of the delay. Our numerical experiments further confirm the empirical superiority of our proposed methods.

\end{abstract}

\section{Introduction}

In this paper, we consider the following stochastic optimization problem
\begin{equation}
\min_{x\in \Rbb^n}\psi(x)\assign\Ebb_{\xi\sim\Xi}[f(x,\xi)]+\omega(x),\label{eq:prob_main}
\end{equation}
where $f(x,\xi)$ is a nonconvex continuous function over $x$ and
$\xi$ is a random variable sampled from some distribution $\Xi$;
$\omega(x)$ is  lower-semicontinuous and  proximal-friendly. We assume that both $f(x,\xi)$ and $\omega(x)$ belong to a general class of nonsmooth nonconvex functions that exhibit weak convexity.  
Here, we say that a function $g(x)$ is $\kappa$-weakly convex if $g(x)+\tfrac{\kappa}{2}\norm x^{2}$
is convex for some $\kappa\ge0$.
Weakly convex optimization has attracted growing interest in machine learning in recent years, and we are particularly interested in a general type of weakly convex problems with the following composition structure \citep{drusvyatskiy2018efficiency}
\begin{equation}
f(x,\xi)=h(c(x,\xi)),\label{eq:composite}
\end{equation}
where $h$ is a convex Lipschitz function and $c(x,\xi)$ is smooth. Optimization in the above composition form is pervasive in applications
arising from machine learning and data science, including robust phase
retrieval \cite{duchi2019solving}, blind deconvolution~\cite{davis2019stochastic},
robust PCA and matrix completion~\cite{charisopoulos2021low}, among
others. 

Stochastic (sub)gradient method
and its proximal variants \cite{zinkevich2010parallelized, dean2012large, RN218, nemirovski2009,shalev2009stochastic} (all referred as {\sgm} in our paper) 
are arguably the most popular approaches for solving
problem~(\ref{eq:prob_main}). Typically, \sgm{} iteratively solves
$x^{k+1}=\argmin_{x}\;\{\inner{f^{\prime}(x^{k},\xi^{k})}{x-x^{k}}+\omega(x)+\tfrac{\gamma_{k}}{2}\norm{x-x^{k}}^{2}\},$
where $\xi^{k}$ is a random sample and $f'(x^{k},\xi^{k})$ denotes a subgradient of $f(x^{k},\xi^{k})$. 
However, despite its wide popularity in machine learning, the sequential and synchronous
nature of \sgm{} is not suitable for modern applications that require parallel processing
in multi-cores or over multiple machines.\condspace

\myquad To further improve  {\sgm} in parallel and distributed environments, recent work
considers a more practical asynchronous setting where the parameter
updates allow outdated gradient information. See \cite{Agarwal2011,sra2016adadelay,zheng2017asynchronous,Recht2011,lian2015asynchronous}.
In the asynchronous setting, it is crucial to know how the stale updates
based on delayed information affects convergence.
For smooth convex optimization, Agarwal and Duchi~\cite{Agarwal2011} show that
delayed stochastic gradient method (\dsgd) obtains a rate of $\Ocal({\sigma}/{\sqrt{K}}+{\tau_{\sigma^2}}/(\sigma^{2}K))$,
where $\tau_{\sigma^2}$ bounds of the second moment of random delays. {\dsgd} with adaptive stepsize has also been studied by \cite{mcmahan2014delay,sra2016adadelay,zheng2017asynchronous}
to achieve better empirical performance and the latest
work \cite{arjevani2020tight,stich2020error} further improves the
rate to $\Ocal({\sigma}/{\sqrt{K}}+{\tau_{\text{max}}}/{K})$. For general
smooth (nonconvex) problems,  the work~\cite{stich2020error} shows that
{\dsgd} converges to stationarity at a rate of $\Ocal({\sigma}/{\sqrt{K}}+{\tau_{\text{max}}}/{K})$.
In a follow-up study \cite{cohen2021asynchronous}, the authors propose
a more robust {\dsgd} whose rate depends on the average delay $\tau_{\text{avg}}$, rather than
the maximum delay. Recently, Koloskova et al.~\cite{koloskova2022sharper} develop a sharp analysis for asynchronous \sgm{} and then a simple delay-adaptive stepsize to achieve the best rate $\Ocal({\sigma}/{\sqrt{K}}+{\tau_{\text{avg}}}/{K})$.
Based on novel delay-adaptive stepsizes and virtual iterate-based analysis, a concurrent work \cite{mishchenko2022asynchronous} has established new convergence rates that depend on the number of workers rather than the delay of the gradient.  


Despite much progress in distributed smooth optimization,
it remains unclear how to develop efficient asynchronous algorithms
for nonsmooth and even nonconvex problems. For general nonsmooth convex problems, the pioneering work \cite{nedic2001distributed} has shown that  the asynchronous incremental subgradient method obtains an $\Ocal(\sqrt{{\tau_{\text{max}}}/{K}})$ convergence rate.
The aforementioned work~\citep{mishchenko2022asynchronous} shows that {\dsgd} achieves a delay-independent rate of $\Ocal(\sqrt{{m}/{K}})$, where $m$ is the number of workers. However, it is still unknown whether their key technique by telescoping the virtual iterates can be extended to composite nonconvex optimization.\condspace

For distributed weakly convex optimization, Xu et al.~\cite{xu2021distributed}
shows that Delayed Stochastic Extrapolated Subgradient (\texttt{DSEGD}), an inertial version of {\dsgd}, exhibits a convergence rate of~$\mathcal{O}((1+\tau_{\text{max}})/{\sqrt{K}}+{\tau_{\text{max}}^2}/{K})$, which has an undesired dependence on the maximum delay $\tau_{\text{max}}$. This issue is further exacerbated in real heterogeneous environments where the maximum delay is dictated by the slowest, or the ``straggler'' nodes.
It remains unclear 
\begin{center}
	\emph{1) whether such delay dependence in {\dsgd} is still improvable or not for weakly convex problems?}
\end{center}
Nevertheless, even if the rate is improvable in terms of $\tau$, there yet remains a fundamental challenge,  as the delay significantly influences the leading term $\mathcal{O}({1}/{\sqrt{K}})$ of the convergence rate. This contrasts with smooth distributed optimization, where the delay only affects a higher order term $\mathcal{O}({1}/{K})$ and hence is negligible in the long run.  
Such a performance gap highlights a substantial limitation of the prior study when nonsmoothness is present. Hence, it is natural to ask: For distributed weakly convex optimization, 

\begin{center}
\textit{2) can we design algorithms with a negligible penalty caused by delays?\\
	 3) Moreover, can we make convergence rates delay independent?~~~~~~~~~~~~~}
\end{center}

The goal of this  paper is to address the above three questions.

\paragraph{Contributions}
In this paper, we answer the above questions positively (Table \ref{tab:compare}). Our contributions are as follows. 
\begin{enumerate}[leftmargin=*,label={\textbf{\arabic*)}}]
	\item For distributed weakly convex optimization, we provide a sharp convergence analysis of {\dsgd} under statistical assumptions on the delays and obtain a rate of  $\mathcal{O} ( {\bar{\tau}}/{\sqrt{K}} +
    {\tau_{\sigma^2}}/{K} )$, where $\bar{\tau}, \tau_{\sigma^2}$ are respectively first and second moments of stochastic delays.  Our result significantly improves upon the previous $\mathcal{O} (
    {\tau_{\text{max}}}/{\sqrt{K}} + {\tau_{\text{max}}^2}/{K}
    )$ rate for {\dsgd} \cite{xu2021distributed}.
    \item For weakly convex problems with composition structure~\eqref{eq:composite}, we propose a new delayed stochastic prox-linear method ({\dspl}) which can exploit the structure \eqref{eq:composite} more effectively. Unlike SGD, the stochastic prox-linear algorithm (\cite{davis2018stochastic, deng2021minibatch})  partially linearizes
the inner function $c(\cdot,\xi)$ while retaining the outer function $h(\cdot)$.
Then it iteratively solves: $x^{k+1}={}\argmin_{x} \big\{ h\brbra{c(x^{k},\xi)+\inner{\nabla c(x^{k},\xi)}{x-x^{k}}} +\omega(x)+\tfrac{\gamma_{k}}{2}\norm{x-x^{k}}^{2} \big\}.$
 To the best of our knowledge, this is the first study of {\spl} in the asynchronous distributed setting. 
We show that the new \dspl{} method achieves a rate of  $\mathcal{O} ( {1}/{\sqrt{K}} + {\tau_{\sigma^2}}/{K} )$, which is consistently better than that of  \dsgd{} in terms of the dependence on delay.  Interestingly, our result implies that the delay is negligible when $K$ is sufficiently large.
    \item We propose a simple yet effective safeguarding step that skips the iteration when the delay is significantly large. This enhancement ensures that the rate depends only on the number of workers rather than on the delay explicitly. Specifically, in an environment of $m$ workers, we obtain an $\mathcal{O}({\sqrt{m}}/{\sqrt{K}} + {m}/{K})$ rate for {\dsgd} and $\mathcal{O} ( {1}/{\sqrt{K}} + {m^2}/{K} )$ for {\dspl}, making both methods robust to arbitrary delays. As per our knowledge, these are the first delay-independent rates for distributed nonsmooth nonconvex optimization. Prior to our work, delay-independent rates were only known for smooth or convex optimization \cite{mishchenko2022asynchronous}, which were derived through a distinctly different delay-adaptive stepsize strategy.
\end{enumerate}
\begin{table}[h]
\footnotesize
\centering
  \caption{\label{tab:compare} Rates of delayed algorithms for nonconvex optimization. $m$: the agent number; $\tau_{\text{max}}$: the maximum delay; $\bar{\tau}=\Ebb[\tau_k]$;
  $\tau_{\sigma^2}=\Ebb[\tau_k^2]$; $\tau_{\text{avg}}$ average over
  arbitrary delays.}
  \begin{tabular}{cccccc}
    \toprule
    Delay & Work & Setting & Problems / algorithms & Convergence rate\\
    \midrule
    & {\cite{xu2021distributed}} &  & $f + \omega$
    / \dsgd & $\mathcal{O} (
    \frac{\tau_{\text{max}}}{\sqrt{K}} + \frac{\tau_{\text{max}}^2}{K}
    )$\\
    Non-robust & \multirow{2}{*}{Ours} & weakly convex & $f + \omega$ / \dsgd
    & \red{$\mathcal{O} ( \frac{\bar{\tau}}{\sqrt{K}} +
    \frac{\tau_{\sigma^2}}{K} )$}\\
    &  &  & $h \circ c + \omega$ / {\dspl} & \red{$\mathcal{O} (
    \frac{1}{\sqrt{K}} + \frac{\tau_{\sigma^2}}{K} )$}\\    \midrule
    &
    {\cite{mishchenko2022asynchronous,koloskova2022sharper,cohen2021asynchronous}}
    & smooth nonconvex & $f$ / {\dsgd} & $\mathcal{O} (
    \frac{1}{\sqrt{K}} + \frac{m}{K} )$ or $\mathcal{O} (
    \frac{1}{\sqrt{K}} + \frac{\tau_{\text{avg}}}{K} )$ \\
    Robust& \multirow{2}{*}{Ours} & \multirow{2}{*}{weakly convex} & $f + \omega$ / {\dsgd} &
    \red{$\mathcal{O}(\frac{\sqrt{m}}{\sqrt{K}} + \frac{m}{K})$}\\
    &  &  & $h \circ c + \omega$ / {\dspl} & \red{$\mathcal{O} (
    \frac{1}{\sqrt{K}} + \frac{m^2}{K} )$}\\
    \bottomrule
  \end{tabular}
\end{table}
\paragraph{Structure of the paper}
Section~\ref{sec:Backgrounds} introduces the notations and problem setup.  Section~\ref{sec:dsmd} analyzes the delayed stochastic (proximal) subgradient method (\dsgd). Section~\ref{sec:dspl} develops the delayed stochastic prox-linear method ({\dspl}). Section~\ref{sec:robust} proposes a simple strategy to make the asynchronous algorithms robust to arbitrary delays.   Section~\ref{sec:Experiments}
conducts experiments to verify our theoretical results.
We draw conclusions in Section~\ref{sec:Conclusions} and leave all the proofs and further discussions in the appendix.
\section{Preliminaries}\label{sec:Backgrounds}

\paragraph{Notations}
We use $\| \cdot \|$ and $\langle \cdot, \cdot \rangle$ to denote the
Euclidean norm and inner product. The sub-differential of a function $f$ is
defined by $\partial f (x) \assign \{ v : f (y) \geq f (x) + \langle v, y - x
\rangle + o (\|x - y\|), y \rightarrow x \}$ and $f' (x) \in \partial f (x)$
denotes a subgradient. If $0 \in \partial f (x)$, then $x$ is called a
stationary point of $f$. The domain of $f$ is defined by $\dom f \assign
\{ x : f (x) < \infty \}$. At iteration $k$, we use $\Ebb_k[\cdot]$ to denote the expectation conditioned on past iterations $\{x^1, \ldots, x^{k-1} \}$. $\mathbb{I}\{\cdot\}$ denotes the 0-1 indicator function of an event. Given a delay sequence $\{\tau_k\}$, we write 
$\Delta_1 := \frac{1}{K} \sum_{k=1}^K \tau_k$ and $\Delta_2 := \frac{1}{K} \sum_{k=1}^K \tau_k^2$.

Our analysis will adopt the Moreau envelope as the potential function, a technique initially identified in the work~\cite{davis2019stochastic}. Let $f$ be a $\kappa$-weakly convex function. Given $\rho > \kappa$, the Moreau envelope and the associated proximal mapping of $f$
are respectively defined by \vspace{10pt}
\[f_{1/\rho}(x)\assign\min_{y}\left\{ f(y)+\tfrac{{\rho}}{2}\|x-y\|^{2}\right\} \quad \text{and} \quad\text{prox}_{f/\rho}(x)\assign\argmin_{y}\left\{ f(y)+\tfrac{{\rho}}{2}\|x-y\|^{2}\right\}.\]
By the optimality condition and convexity of $f(y)+\tfrac{{\rho}}{2}\|x-y\|^{2}$, 
$0\in\partial f\rbra{\text{prox}_{f/\rho}(x)}+\rho\rbra{\text{prox}_{f/\rho}(x)-x}.$
The Moreau envelope can be nicely interpreted as a smooth approximation
of the original function. Specifically, it can be shown that $f_{1/\rho}(x)$
is differentiable and its gradient is $\nabla f_{1/\rho}(x)=\rho(x-\text{prox}_{f/\rho}(x))$
(See \citep{davis2019stochastic}). Combining the above two relations,
we obtain  $\nabla f_{1/\rho}(x)\in\partial f\rbra{\text{prox}_{f/\rho}(x)}$. Therefore, the Moreau envelope can be used as a measure of approximate
stationarity: if $\norm{\nabla f_{1/\rho}(x)}\le\vep$, then $x$
is in the proximity (i.e. $\norm{x-\text{prox}_{f/\rho}(x)}\le\rho^{-1}\vep$)
of a near stationary point $\text{prox}_{f/\rho}(x)$ (i.e. $\dist(\partial f\rbra{\text{prox}_{f/\rho}(x)},0)\le\vep$). 


\subsection{Assumptions}

Throughout the paper, we make the following assumptions.
\condsep
\begin{enumerate}[leftmargin=*,itemsep=2pt,label=\textbf{A\arabic*:},ref=\rm{\textbf{A\arabic*}}]
\item (i.i.d. sample) We draw i.i.d. samples $\{
\xi^k \}$ from $\Xi$. \label{A1}
\item (Lipschitz continuity) $\omega(x)$ is $L_{\omega}$-Lipschitz continuous over its domain.
\label{A2}
\item (Weak convexity) $\omega$ is $\kappa$-weakly convex. \label{A3}
\item (Bounded moments) The distribution of the independent stochastic delays $\{ \tau_k
  \}$ has bounded first and second moments. i.e., $\mathbb{E} [\tau_k] \leq \bar{\tau} <
  \infty, \mathbb{E} [\tau_k^2] \leq {\tau_{\sigma^2}} <
  \infty, \forall k$. \label{A5} 
\end{enumerate}
\condsep
\begin{rem}
Assumptions \ref{A1} to \ref{A3} are typical in stochastic weakly convex optimization \cite{davis2019stochastic}, while \ref{A5} is common in distributed optimization  \citep{backstrom2019mindthestep, sra2016adadelay}.
Moreover, throughout our analysis, we regard $\{\tau_k\}$ as a pre-defined random sequence independent of data sampling and the algorithm \citep{sra2016adadelay, xu2021distributed}.
\end{rem}



\section{Delayed proximal subgradient method} \label{sec:dsmd}
In this section, we analyze the convergence rate of the delayed stochastic proximal subgradient method
({\dsgd}) for weakly convex optimization.  \dsgd{} is the workhorse of most applications and
is frequently analyzed in the literature on centralized distributed optimization.

\begin{algorithm}
\KwIn{$x^{1}$\;}
\For{k =\rm{ 1, 2,...}}{
Let $g^{k-\tau_k} \in \partial f(x^{k - \tau_k}, \xi^{k - \tau_k})$  be computed by a worker with delay $\tau_{k}$\;
In the master node update
\begin{align}
x^{k + 1} = \argmin_{x\in\Rbb^n} \, \{ \langle g^{k-\tau_k}, x - x^k \rangle+\omega (x) + \myfrac{\gamma_k}{2} \|x - x^k\|^2 \}. \label{eq:update-dsgm}
\end{align} 
}
\caption{Delayed stochastic proximal subgradient method\label{alg:dsgd}}
\end{algorithm}
We restrict $f$ to have a bounded subgradient and make the following extra assumption.
\begin{enumerate}[leftmargin=*,start=1,label=\textbf{B\arabic*:},ref=\rm{\textbf{B\arabic*}}]
\item $f (x, \xi)$ is $\lambda$-weakly convex, $\Ebb_\xi[ \|g\|^2 ]\leq L_f^2, g\in \partial f(x, \xi)$ for all $ x \in \dom \omega \subseteq \text{int}(\dom f)$ and all $\xi \sim \Xi$.\label{B1}
\end{enumerate}Now we are ready to present the convergence analysis of {\dsgd}. 
\myquad Our analysis relies on Moreau envelope, denoted as $\psi_{1 / \rho}(x^k)$, which serves as the
potential function. After assessing the descent of the potential function with delays separated as an error term, we derive the convergence result by bounding the stochastic delays. \condspace
The following lemma provides the descent property.
\begin{lem} \label{lem:dsmd-descent}
Let \begin{rm}$\hat{x}^k\coloneqq\textrm{prox}_{\psi/\rho}(x^k)$\end{rm}. Suppose \ref{A1}, \ref{A2}, \ref{A3} and {\ref{B1}} hold. If $\rho > 2 \lambda + \kappa,
  \gamma_k \geq \rho$, then
  \ifthenelse{\boolean{doublecolumn}}{
  \begin{align}
    & \myfrac{\rho (\rho - 2 \lambda - \kappa)}{2(\gamma_k - 2 \lambda - \kappa)} \| \hat{x}^k - x^k\|^2\\
    & \quad \leq{} \psi_{1 / \rho} (x^k) -\mathbb{E}_k [\psi_{1 / \rho} (x^{k +
    1})] \nonumber \\
    & \quad\quad + \myfrac{\rho \lambda}{\gamma_k - 2 \lambda - \kappa} \mathbb{E}_k [\|x^{k + 1} -
    x^{k - \tau_k} \|^2]  \nonumber \\
    & \quad\quad + \myfrac{2 \rho L_f}{\gamma_k - 2 \lambda - \kappa}
    \mathbb{E}_k [\|x^{k + 1} - x^{k - \tau_k} \|] . \nonumber
  \end{align}
  }{
    \begin{align}
    \myfrac{\rho (\rho - 2 \lambda - \kappa)}{2(\gamma_k - 2 \lambda - \kappa)} \| \hat{x}^k - x^k\|^2 
     & \leq{} \psi_{1 / \rho} (x^k) -\mathbb{E}_k [\psi_{1 / \rho} (x^{k +
    1})] - \myfrac{\rho(\gamma_k - \rho)}{2(\gamma_k - 2 \lambda - \kappa)} \mathbb{E}_k [\| x^{k + 1} - x^k
  \|^2]  \nonumber\\ 
    &  \quad + \myfrac{\rho \lambda}{\gamma_k - 2 \lambda - \kappa} \mathbb{E}_k [\|x^{k + 1} -
    x^{k - \tau_k} \|^2] + \myfrac{2 \rho L_f}{\gamma_k - 2 \lambda - \kappa}
    \mathbb{E}_k [\|x^{k + 1} - x^{k - \tau_k} \|]  \nonumber
  \end{align}
  }
\end{lem}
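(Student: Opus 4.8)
The plan is to mirror the Moreau-envelope potential analysis underlying Lemma~\ref{lem:dspl-descent}, swapping the prox-linear model for the subgradient model $\langle g^{k-\tau_k},\cdot\rangle$ and tracking where the weaker (merely first-order) accuracy of the linear model enters. Define $\hat{x}^k$ to be $\mathrm{prox}_{\psi/\rho}(x^k)$, so that $\psi_{1/\rho}(x^k)=\psi(\hat{x}^k)+\tfrac{\rho}{2}\|\hat{x}^k-x^k\|^2$ and $\|\nabla\psi_{1/\rho}(x^k)\|^2=\rho^2\|\hat{x}^k-x^k\|^2$. The first reduction is to test the Moreau envelope at $x^{k+1}$ against the point $\hat{x}^k$, giving $\mathbb{E}_k[\psi_{1/\rho}(x^{k+1})]\le\psi(\hat{x}^k)+\tfrac{\rho}{2}\mathbb{E}_k[\|\hat{x}^k-x^{k+1}\|^2]$; subtracting this from the identity for $\psi_{1/\rho}(x^k)$ shows that the claim follows once I prove the single ``contraction'' estimate $\mathbb{E}_k[\|\hat{x}^k-x^{k+1}\|^2]\le\tfrac{\gamma_k-\rho}{\gamma_k-2\lambda-\kappa}\|\hat{x}^k-x^k\|^2+\tfrac{2\lambda}{\gamma_k-2\lambda-\kappa}\mathbb{E}_k[\|x^{k+1}-x^{k-\tau_k}\|^2]+\tfrac{4L_f}{\gamma_k-2\lambda-\kappa}\mathbb{E}_k[\|x^{k+1}-x^{k-\tau_k}\|]$, which conveniently carries no function values; multiplying by $\tfrac{\rho}{2}$ turns these coefficients into exactly those in the statement.

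To establish the contraction, I would start from the optimality of $x^{k+1}$. Since $\omega$ is $\kappa$-weakly convex and the model is affine, the subproblem is $(\gamma_k-\kappa)$-strongly convex, so testing against $\hat{x}^k$ yields $\tfrac{\gamma_k-\kappa}{2}\|\hat{x}^k-x^{k+1}\|^2\le\langle g^{k-\tau_k},\hat{x}^k-x^{k+1}\rangle+\omega(\hat{x}^k)-\omega(x^{k+1})+\tfrac{\gamma_k}{2}\|\hat{x}^k-x^k\|^2-\tfrac{\gamma_k}{2}\|x^{k+1}-x^k\|^2$. I would then split $\langle g^{k-\tau_k},\hat{x}^k-x^{k+1}\rangle=\langle g^{k-\tau_k},\hat{x}^k-x^{k-\tau_k}\rangle+\langle g^{k-\tau_k},x^{k-\tau_k}-x^{k+1}\rangle$, bounding the first term by the $\lambda$-weak convexity of $f(\cdot,\xi^{k-\tau_k})$ (which produces $f(\hat{x}^k,\xi^{k-\tau_k})-f(x^{k-\tau_k},\xi^{k-\tau_k})+\tfrac{\lambda}{2}\|\hat{x}^k-x^{k-\tau_k}\|^2$) and the second by Cauchy--Schwarz with $\|g^{k-\tau_k}\|\le L_f$ (the first source of a linear $L_f\|x^{k+1}-x^{k-\tau_k}\|$ term). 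Using $\tfrac{\lambda}{2}\|\hat{x}^k-x^{k-\tau_k}\|^2\le\lambda\|\hat{x}^k-x^{k+1}\|^2+\lambda\|x^{k+1}-x^{k-\tau_k}\|^2$ and absorbing $\lambda\|\hat{x}^k-x^{k+1}\|^2$ into the left-hand side converts $\gamma_k-\kappa$ into $\gamma_k-2\lambda-\kappa$ and leaves the quadratic delay term $\lambda\|x^{k+1}-x^{k-\tau_k}\|^2$.

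Taking $\mathbb{E}_k$ and invoking \ref{A1}, I would replace the sampled values $f(\hat{x}^k,\xi^{k-\tau_k}),f(x^{k-\tau_k},\xi^{k-\tau_k})$ by the population values $f(\hat{x}^k),f(x^{k-\tau_k})$, so that $f(\hat{x}^k)+\omega(\hat{x}^k)=\psi(\hat{x}^k)=\psi_{1/\rho}(x^k)-\tfrac{\rho}{2}\|\hat{x}^k-x^k\|^2$; the $\|\hat{x}^k-x^k\|^2$ terms then combine to $\tfrac{\gamma_k-\rho}{2}\|\hat{x}^k-x^k\|^2$. To dispose of the leftover $-f(x^{k-\tau_k})-\mathbb{E}_k[\omega(x^{k+1})]-\tfrac{\gamma_k}{2}\mathbb{E}_k[\|x^{k+1}-x^k\|^2]$, I would use the envelope suboptimality at $x^k$ with test point $x^{k+1}$, namely $\psi_{1/\rho}(x^k)\le\mathbb{E}_k[f(x^{k+1})+\omega(x^{k+1})]+\tfrac{\rho}{2}\mathbb{E}_k[\|x^{k+1}-x^k\|^2]$, to cancel the $\omega(x^{k+1})$ term and a $\tfrac{\rho}{2}$-share of the quadratic, and then bound $\mathbb{E}_k[f(x^{k+1})]-f(x^{k-\tau_k})\le L_f\,\mathbb{E}_k[\|x^{k+1}-x^{k-\tau_k}\|]$ by Lipschitzness of the population $f=\mathbb{E}_\xi f(\cdot,\xi)$ (the second linear source), discarding the residual nonpositive $-\tfrac{\gamma_k-\rho}{2}\mathbb{E}_k[\|x^{k+1}-x^k\|^2]$. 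Dividing the resulting inequality by $\tfrac{\gamma_k-2\lambda-\kappa}{2}$ merges the two $L_f$ contributions into the coefficient $4L_f$ and completes the contraction estimate.

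I expect the main obstacle to be the probabilistic bookkeeping rather than the algebra. One must fix a filtration convention in which $\hat{x}^k,x^k,x^{k-\tau_k}$ are $\sigma_k$-measurable while the sample $\xi^{k-\tau_k}$ driving the current update is conditionally i.i.d., so that the unbiasedness identities $\mathbb{E}_k[f(\cdot,\xi^{k-\tau_k})]=f(\cdot)$ are valid at the fixed points $\hat{x}^k,x^{k-\tau_k}$ but fail precisely at $x^{k+1}$, which depends on $\xi^{k-\tau_k}$. The entire reason for routing the function values through the Moreau envelope is to isolate this one bad dependence and to decouple it, via the Lipschitz estimate $\mathbb{E}_k[f(x^{k+1})]-f(x^{k-\tau_k})\le L_f\mathbb{E}_k[\|x^{k+1}-x^{k-\tau_k}\|]$, so that it contributes only to the benign $O(\|x^{k+1}-x^{k-\tau_k}\|)$ delay terms; getting that decoupling right (and deferring all use of the delay moments to \ref{A5} in the subsequent theorem) is the delicate step.
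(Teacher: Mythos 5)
Your proposal is correct and is essentially the paper's own argument: the same two optimality/three-point inequalities (strong convexity of the proximal subproblem tested at $\hat{x}^k$, and suboptimality of $x^{k+1}$ in the Moreau-envelope problem at $x^k$), the same splitting of $\langle g^{k-\tau_k},\hat{x}^k-x^{k+1}\rangle$ through $x^{k-\tau_k}$ handled by weak convexity plus Cauchy--Schwarz with $\|g^{k-\tau_k}\|\le L_f$, the same Young-inequality absorption turning $\gamma_k-\kappa$ into $\gamma_k-2\lambda-\kappa$, and the same final descent step through $\mathbb{E}_k[\psi_{1/\rho}(x^{k+1})]\le\psi(\hat{x}^k)+\tfrac{\rho}{2}\mathbb{E}_k[\|\hat{x}^k-x^{k+1}\|^2]$. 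The only difference is organizational (you isolate the contraction estimate first and then apply it, where the paper sums the two relations up front), and your filtration bookkeeping matches the paper's implicit treatment.
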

\begin{rem}
	\textbf{Lemma \ref{lem:dsmd-descent}} shows that aside from the noise and delay-related terms, unless we are close to approximate 
	stationarity characterized by $\| \hat{x}^k - x^k\|^2$, there is always sufficient decrease in the potential function. Intuitively, if we take sufficiently large regularization $\gamma$ and bound the delays using \ref{A5}, convergence is almost immediate. Now we show convergence of {\dsgd} in \textbf{Theorem \ref{thm:dsmd}}.
\end{rem}
\begin{thm}\label{thm:dsmd}
Under the same conditions as \begin{rm}{\textbf{Lemma \ref{lem:dsmd-descent}}}\end{rm} as well as \ref{A5}, taking $\gamma_k \equiv \gamma =\rho + 4 \lambda + \kappa + \sqrt{K} / \alpha$
  for some $\alpha > 0$, letting $k^{\ast}$ be  chosen between 1 and
  $K$ uniformly at random, then
\begin{align}
  	& \mathbb{E} \bsbra{\| \nabla \psi_{1/\rho}(x^{k^\ast})\|^2} \leq \myfrac{2\rho}{
     \rho - 2 \lambda - \kappa} \Big[ \myfrac{(\rho + 2\lambda)D}{K} + \myfrac{D}{\sqrt{K} \alpha} + \myfrac{4 \rho L_f (L_f + L_{\omega}) \alpha}{\sqrt{K}} (\Delta_1 + 1) +
  \myfrac{8\rho \lambda (L_f + L_{\omega})^2 \alpha^2}{K} \Delta_2 \Big] \nonumber\label{thm1:statement-1},
  \end{align}
where $D = \psi_{1 / \rho} (x^1) -\inf_x \psi (x)$ and recall our notation $\Delta_1 = \myfrac{1}{K} \textstyle{\sum}_{k = 1}^K \tau_k, \Delta_2 = \myfrac{1}{K} \textstyle{\sum}_{k = 1}^K \tau_k^2$. 
\end{thm}

\begin{rem}
Note that $\alpha$ controls the trade-off between noise, delay and optimization. In practice we can set $\alpha$ as a hyper-parameter and tune it to improve performance.
\end{rem}

The bound for {\dsgd} states that
  \[ \mathbb{E} [\| \nabla \psi_{1/\rho}(x^{k^\ast})\|^2] =\mathcal{O}
     \left(\myfrac{1}{\sqrt{K}} + \myfrac{\Delta_1}{\sqrt{K}} + \myfrac{\lambda \Delta_2}{K} \right)~~ \text{or} ~~ \mathbb{E} [\| \nabla \psi_{1/\rho}(x^{k^\ast})\|^2] =\mathcal{O}
     \left( \myfrac{1}{\sqrt{K}}+\myfrac{\bar{\tau}}{\sqrt{K}} + \myfrac{\lambda \tau_{\sigma^2}}{K} \right), \]
if we take $\Ebb[\Delta_1] = \bar{\tau}$ and $\Ebb[\Delta_2] = \tau_{\sigma^2}$. Here $\Delta_1$ is the average delay and is considered ``robust'' in the distributed setting. However, there exists a second term $\lambda\Delta_2$ arising from $\lambda$-weak convexity. As we will show later, this term, in the worst case, can not be trivially bounded. Therefore, we now resort to our statistical assumption \ref{A5} and use the first and second moments to give a bound.

\section{Delayed stochastic prox-linear method\label{sec:dspl}\protect}

In this section, we present a delayed stochastic prox-linear method ({\dspl}) 
for the composition optimization problem with $f (x, \xi) = h (c (x, \xi))$, where $h(\cdot):\Rbb\rightarrow \Rbb$ is a convex nonsmooth function and $c(\cdot,\xi):\Rbb^n\rightarrow \Rbb$ is a smooth, potentially nonconvex function. 
For brevity, we denote $f_z (x,
\xi) = h (c (z, \xi) + \langle \nabla c (z, \xi), x - z \rangle)$, a partial linearization of the objective, and we use these two notations interchangeably to describe \dspl{}. 
We will show that this linearization scheme improves the accuracy of approximating
the objective, thus giving improved convergence rates in the presence of delay. It should be noted that while we assume $c(\cdot)$ to be a smooth function, our analysis can seamlessly extend to the case where $c$ is a smooth mapping with a bounded Jacobi matrix.

\begin{algorithm}
\KwIn{$x^{1}$\;}
\For{k =\rm{ 1, 2,...}}{
Let $c(x^{k-\tau_{k}},\xi^{k - \tau_k})$ and $\nabla c(x^{k-\tau_{k}},\xi^{k - \tau_k})$
be computed by a worker with delay $\tau_{k}$\;
In the master node update
\ifthenelse{\boolean{doublecolumn}}{
\begin{align} \label{eq:update-dspl}
x^{k+1}={}&\argmin_x \,\big\{ f_{x^{k - \tau_k}}(x, \xi^{k - \tau_k}) \\
& \quad\quad\quad\quad + \omega (x) +  \myfrac{\gamma_k}{2} \|x - x^k\|^2  \nonumber
  \big\}\end{align}
}{
\begin{align} \label{eq:update-dspl}
x^{k+1}={}&\argmin_{x\in\Rbb^n} \, \big\{ f_{x^{k - \tau_k}}(x, \xi^{k - \tau_k})  + \omega (x) +  \myfrac{\gamma_k}{2} \|x - x^k\|^2  
  \big\}\end{align}
}
}
\caption{Delayed stochastic prox-linear method\label{alg:dspl}}
\end{algorithm}

\myquad We describe \dspl{} in \textbf{Algorithm~\ref{alg:dspl}}.  {\dspl} can be deployed in a multi-agent network where all the
workers are connected to a master server.  We assume the workers
access random samples $\xi$ and compute the function value $c (x, \xi)$ and
gradient $\nabla c (x, \xi)$. At the $k$-th iteration, the master node
receives a delayed value $c (x^{k - \tau_k}, \xi^{k - \tau_k})$ and gradient
$\nabla c (x^{k - \tau_k}, \xi^{k - \tau_k})$ of an earlier point $x^{k -
\tau_k}$. Next, it performs a proximal update~(\ref{eq:update-dspl}) to obtain
the next iterate $x^{k + 1}$. 
\textbf{Figure~\ref{fig:architecture}} illustrates the operational dynamics of \dspl{} and contrast it with \dsgd{}. 
Throughout the rest of this paper, we assume
that the proximal update is easy to compute. To derive the convergence results of {\dspl}, we now formally state the assumptions on $h$ and $c$.
\begin{enumerate}[leftmargin=*,start=1,label=\textbf{C\arabic*:},ref=\rm{\textbf{C\arabic*}}]
\item $h(x)$ is convex and $L_h$-Lipschitz continuous over its domain; $c(x, \xi)$ has $C(\xi)$-Lipschitz continuous gradient and is $L_c(\xi)$-Lipschitz continuous over $\dom \omega$. Moreover, we assume that $\Ebb_\xi[C(\xi)^2] \leq C^2, \Ebb_\xi[L_c(\xi)^2] \leq L_c^2$. \condspace\label{C1}
\end{enumerate}
\begin{rem} \label{rem:5}The assumption of Lipschitz continuity for $c$ is quite prevalent in the literature of weakly convex optimization \cite{davis2019stochastic,drusvyatskiy2018efficiency,duchi2018stochastic}. However, it may not hold if $\dom \omega$ is unbounded and $c$ is nonlinear, such as in the case of quadratic functions. To circumvent this problem, we introduce the relative Lipschitzian property and extend the method to the Bregman proximal iteration. This involves replacing the Euclidean regularization term $\myfrac{1}{2}\|x - x_k\|^2$ in \eqref{eq:update-dspl} with the divergence $V_d(x, x^k)$, which is generated by a strongly convex function $d(\cdot)$. The Bregman \dspl{} encompasses Algorithm~\ref{alg:dspl} as a special case; the details are provided in the appendix for a thorough examination.
\condspace
\end{rem}

Under the above assumptions,  $f_z (x, \xi)$ satisfies the following
 desirable properties.
\begin{prop}[Properties of $f_z(x, \xi)$]\label{prop:property-dspl}\
  \begin{enumerate}[leftmargin=25pt, itemsep=0pt,label=\textbf{P\arabic*:},ref=\rm{\textbf{P\arabic*}}]
    \item $f_z(x, \xi)$ is convex for all $ x, z \in \dom \omega$, and every $ \xi \sim \Xi$. \label{P1}
    \item $|f_{z}(x,\xi)-f(x, \xi)|\leq\myfrac{L_{h}C(\xi)}{2}\|x-z\|^{2}$ ~for all $ x,z\in \dom \omega$ and $\xi\sim\Xi$. \label{P2}
    \item $f_z (x, \xi) - f_z (y, \xi) \leq L_h L_c(\xi) \|y - x\|$~~for all $ x,
       y, z \in \dom \omega$ and all $\xi \sim \Xi$ \label{P3}\condspace
  \end{enumerate}
\end{prop}

From \textbf{Proposition \ref{prop:property-dspl}}, we see $f(x, \xi)$ is $L_hC(\xi)$-weakly convex since the error between $f(x, \xi)$ and a convex function is bounded by a quadratic function. Furthermore, we know that $f(x)$ is $L_h L_c$ Lipschitz-continuous and $L_hC$-weakly convex after taking expectation.
%
For a unified analysis, we take $L_f = L_hL_c, \lambda = L_h C$ and use these constants to present the results. The following lemma
characterizes the descent property for our potential function in {\dspl}.
\begin{lem} \label{lem:dspl-descent}
Suppose \ref{A1}, \ref{A2}, \ref{A3} and \ref{C1} hold, if $\rho > 2 \lambda + \kappa, \gamma_k \geq \rho$, then
    \begin{align*}
    \myfrac{\rho (\rho - 2 \lambda - \kappa)}{2(\gamma_k - 2 \lambda - \kappa)} \|\hat{x}^k - x^k\|^2
    & \leq{} \psi_{1 / \rho} (x^k) -\mathbb{E}_k [\psi_{1 / \rho} (x^{k
    + 1})] - \myfrac{\rho(\gamma_k - \rho)}{2(\gamma_k - 2 \lambda - \kappa)} \mathbb{E}_k[\| x^{k + 1} - x^k \|^2]\\ 
    & \quad + \myfrac{2 \rho L_f^2}{(\gamma_k - \kappa)(\gamma_k -
  2 \lambda - \kappa)}   + \myfrac{3 \rho \lambda}{2 (\gamma_k - 2 \lambda - \kappa)}
    \mathbb{E}_k [\| x^{k + 1} - x^{k - \tau_k} \|^2].
  \end{align*}
\end{lem}
Similar to the analysis of {\dsgd}, we bound the delays to give the convergence result.
\begin{thm} \label{thm:dspl}
Under the same conditions as \begin{rm} \textbf{Lemma \ref{lem:dspl-descent}}\end{rm} as well as \ref{A5}, taking $\gamma_k \equiv \gamma = \rho + 6 \lambda + \kappa + \sqrt{K} / \alpha$ for some $\alpha > 0$, letting $k^*$ be uniformly chosen between $1$ and $K$, then
\begin{align}
 & \mathbb{E} [\| \nabla \psi_{1/\rho}(x^{k^\ast})\|^2] \leq \myfrac{2\rho}{
       \rho - 2 \lambda - \kappa} \Big[ \myfrac{(\rho + 4\lambda) D}{K} + \myfrac{D}{\sqrt{K} \alpha} + \myfrac{2
  \rho L_f^2\alpha}{\sqrt{K} } + \myfrac{6 \rho  \lambda (L_f + L_{\omega})^2
  \alpha^2}{K} \Delta_2 \Big], \nonumber
\end{align}
where $D = \psi_{1 / \rho} (x^1) - \inf_x \psi (x)$. 
\end{thm}
If we take $\Ebb[\Delta_2] = \tau_{\sigma^2}$, then Theorem~\ref{thm:dspl} implies that
  \[ \mathbb{E} [\| \nabla \psi_{1/\rho}(x^{k^\ast})\|^2] = \mathcal{O}
     ( \myfrac{1}{\sqrt{K}} + \myfrac{\lambda \Delta_2}{K} )~~ \text{or} ~~ \mathbb{E} [\| \nabla \psi_{1/\rho}(x^{k^\ast})\|^2] =\mathcal{O}
     ( \myfrac{1}{\sqrt{K}} + \myfrac{\lambda \tau_{\sigma^2}}{K}).\]
Compared to {\dsgd},  the delays for {\dspl} appear only in a higher-order term with respect to $K$. Different from {\dsgd} where $\lambda$ only characterizes weak convexity, $\lambda$ for {\dspl} also represents the quadratic upper-bound on $f(x, \xi)$ in \ref{P2}, which is not 0 even if the problem is convex.
     
\paragraph{DSPL vs. DSGD}
\begin{figure*}[h]
\centering{}\includegraphics[scale=0.33]{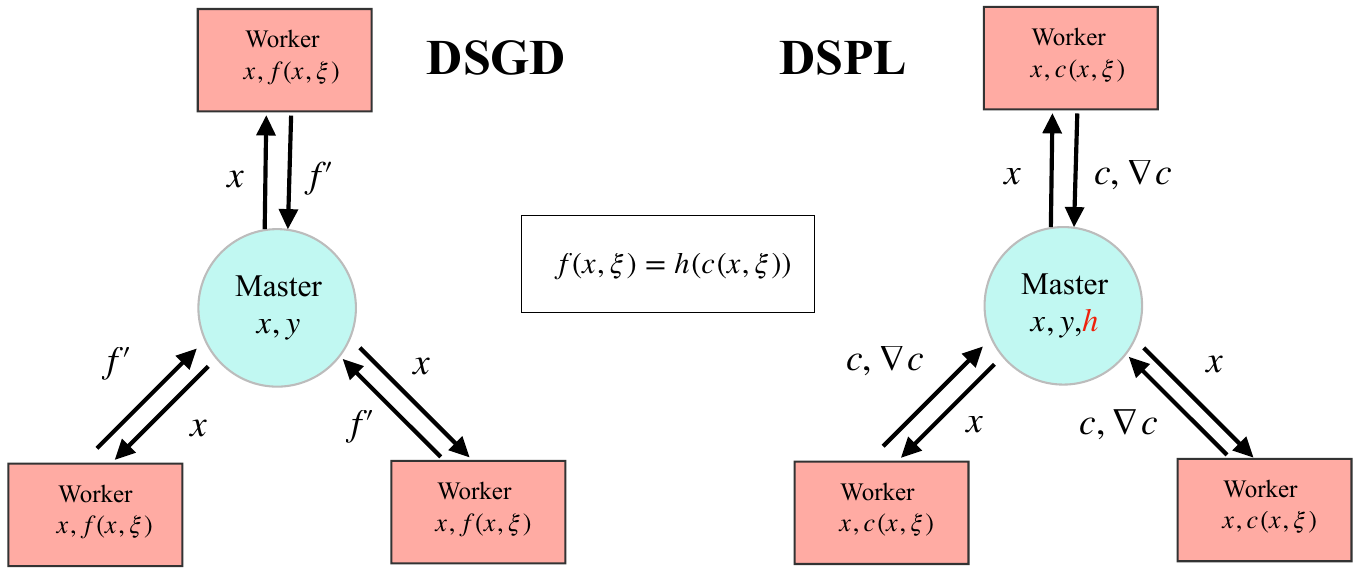}\caption{{\dsgd} and {\dspl} in a master-worker architecture.}\label{fig:architecture}
\end{figure*}
We give some further insights on the behavior of {\dspl} and {\dsgd} for the composition function~\eqref{eq:composite}.
Intuitively, as the algorithm converges, we have
$\lim_{k \raw 0} \norm{x^k - x^{k + 1}} = 0$ a.s.  When $x^k\approx x^{k-\tau_k}$, {\dspl} enjoys an increasingly stable estimation of the proximal mapping~(\ref{eq:update-dspl}), as the influence of delay is diminishing and the error is mainly driven by stochastic sampling.
The same conclusion holds on smooth \dsgd{} due to the Lipschitz continuity of $\nabla f$.
On the other hand, when {\dsgd} is applied for a nonsmooth problem, the master node receives an out-of-date subgradient $f'$ from the worker and solves \eqref{eq:update-dsgm}.
Since  $f(x,\xi)$ is nonsmooth, the subgradient $f'(x^{k-\tau_k}, \xi)$ may differ
significantly from $f'(x^{k}, \xi)$ even when the sequence $\{x^k\}$ converges.  Hence, \dsgd{} will constantly suffer from delay during all the updates~\eqref{eq:update-dsgm}.

\paragraph{DSPL with momentum} We also remark that the momentum technique from {\dsgd} can be extended to {\dspl}, which gives us the same $\mathcal{O} (
    {\tau_{\text{max}}}/{\sqrt{K}} + {\tau_{\text{max}}^2}/{K}
    )$ convergence rate as in \cite{xu2021distributed}. We refer the interested readers to the Appendix \ref{sec:dsepl}.

When $f$ is smooth, the analysis of \dspl{} can be adapted to yield a comparable convergence rate for the proximal stochastic gradient method for minimizing $\psi(\cdot)$.
\begin{thm}\label{coro:dsmd}
Suppose all the assumptions in \begin{rm}{\textbf{Lemma \ref{lem:dsmd-descent}}}\end{rm} and \ref{A5} hold, and that $f$ is $\lambda$-smooth. Let $\gamma_k \equiv \gamma = \rho + 6 \lambda + \kappa + \sqrt{K} / \alpha$ for some $\alpha > 0$, then $\mathbb{E} [\|\nabla \psi_{1/\rho}(x^{k^\ast})\|^2] = \mathcal{O}
     ( \myfrac{1}{\sqrt{K}} + \myfrac{\lambda \Delta_2}{K} )$.
\end{thm}

So far we have spent two sections analyzing the two algorithms so that enough intuition can be established. All these serve for our ultimate goal: making {\dsgd} and {\dspl} robust to arbitrary delays.
\section{Weakly convex optimization robust to arbitrary delays}\label{sec:robust}

This section proposes robust variants of {\dsgd}
and {\dspl}, for which the explicit delays are eliminated from the convergence rate.
What we will do is reduce the impact of delay to \textit{the number of
agents in the network}. Moving forward, we substitute  \ref{A5} with \ref{D1}.

\begin{enumerate}[leftmargin=*,start=1,label=\textbf{D\arabic*:},ref=\rm{\textbf{D\arabic*}}]
\item The distributed environment has $m$ workers. \condspace\label{D1}
\end{enumerate}

The previously established results have provided sufficient intuition on how delays impact our algorithms. In view of \textbf{Theorem \ref{thm:dsmd}} and \textbf{Theorem \ref{thm:dspl}}, we have isolated delay-dependent $\mathcal{O} (
{\Delta_1}/{\sqrt{K}} )$ and $\mathcal{O} ( {\Delta_2}/{K}
)$ in the proof. Although ${\Delta_1}/{\sqrt{K}}$ seems larger, it turns out that $\Delta_2$ stands in our way. The following lemma shows $\Delta_1 = \frac{1}{K} \sum_{k = 1}^K
\tau_k$ is bounded by $m$.

\begin{lem} \label{lem:counting}
  In a distributed environment of $m$ workers $\Delta_1 = \frac{1}{K} \sum_{k
  = 1}^K \tau_k \leq m - 1 \leq m$.
\end{lem}

Given \textbf{Lemma \ref{lem:counting}}, we know ${\Delta_1}/{\sqrt{K}} =\mathcal{O}
( {m}/{\sqrt{K}} )$ and we can replace dependency on $\Delta_1$ by $m$.
Now we consider $\Delta_2 = \frac{1}{K} \sum_{k = 1}^K \tau_k^2$. Even if we have a larger denominator $K$ neutralizing the effect of $\Delta_2$, the following example shows
that in the worst case, $\Delta_2$ can be of $\mathcal{O} (K)$ and result in
$\mathcal{O} (1)$ error.
\begin{exple}[Why $\Delta_2$ hurts performance]
  Given sequence of delays $\{ \tau_k \}$ such that $\tau_k = 0, k \leq K - 1, \tau_K = K$. Then $\Delta_2 = K$ and ${\Delta_2}/{K} = 1$.
\end{exple}
The example tells that delays of $\mathcal{O} (K)$ ruin our
convergence. In other words, to recover an overall $\mathcal{O}
( {1}/{\sqrt{K}} )$ convergence rate, we need
${\Delta_2}/{K} =\mathcal{O} ({1}/{\sqrt{K}} ) \Rightarrow \sum_{k = 1}^K \tau_k^2 =\mathcal{O} (K^{3 / 2})$. The next lemma provides a hint for our algorithm design.

\begin{lem}\label{lem:sequence}
  If a sequence of nonnegative integers $\{ \tau_k \}$ satisfy
  $\sum_{k = 1}^K \tau_k \leq m K$, then given $T \geq 0$,
  \begin{enumerate}
    \item if $\tau_k \leq T$ for all $k$, then $\sum_{k = 1}^K \tau_k^2 \leq m KT$;
    
    \item $\sum_{k = 1}^K \mathbb{I} \{ \tau_k \leq T \}
    \geq K - m KT^{-1}$.
  \end{enumerate}
\end{lem}

\textbf{Lemma \ref{lem:sequence}} tells us two facts about a nonnegative integer sequence of length $K$
with sum bounded by $\mathcal{O} (K)$: \textbf{1)}. if we restrict the
elements to be less than $\mathcal{O} ( T )$, then $\sum_{k
= 1}^K \tau_k^2 \leq \mathcal{O} (KT)$. \textbf{2)}. there will be
$\Omega (K - mKT^{-1})$ elements bounded by $T$.  Back to our context, this implies \textbf{1)}. To reduce
${\Delta_2}/{K}$ to $\mathcal{O} ( {1}/{\sqrt{K}} )$, we
can discard the iterations of delays greater than $T = \mathcal{O} (\sqrt{K})$.
\textbf{2)}. We skip no more than $\mathcal{O} ( \sqrt{K} )$
iterations and optimization works with $\mathcal{O} ( K - \sqrt{K} ) = \mathcal{O} (K)$ iterations left.

\ifthenelse{\boolean{arxiv}}{
Having established the foundational understanding, we outline the main steps in Algorithm~\ref{alg:safeguard}. It is based on the aforementioned intuition and incorporates a ``safeguarding'' step to discard inaccurate information, which could potentially hurt the convergence performance. As the above argument on the accumulated delay is independent of any specific algorithm, the safeguarding strategy can be applied to both \dsgd{} and \dspl{}. To make our parameter setting more general, in our analysis we consider $T = r^{-1}mK^\beta, r > 0, \beta \geq 0$.
\begin{algorithm}[H] \label{alg:safeguard}
\KwIn{$x^{1},T= r^{-1} m K^\beta$\;}
\For{k $=$ \rm{1, 2,...}}{
\eIf{$\tau_k \leq T$}{
Update using \eqref{eq:update-dsgm} for \dsgd{} or \eqref{eq:update-dspl} for \dspl{}
}{ $x^{k + 1} = x^k$ }
}
\caption{Safeguarded \dsgd{}/\dspl{}}
\end{algorithm}

}{\setlength{\columnsep}{1cm}
\begin{multicols}{2}
Having established the foundational understanding, we outline the main steps in \textbf{Algorithm \ref{alg:safeguard}}. It is based on the aforementioned intuition and incorporates a ``safeguarding'' step to discard inaccurate information  which could potentially hurt the convergence performance. As the above argument on the accumulated delay is independent of any specific algorithm, the safeguarding strategy can be applied to both \dsgd{} and \dspl{}. To make our parameter setting more general, in our analysis we consider $T = r^{-1}mK^\beta, r > 0, \beta \geq 0$.

\columnbreak
\begin{algorithm}[H] \label{alg:safeguard}
\KwIn{$x^{1},T= r^{-1} m K^\beta$\;}
\For{k $=$ \rm{1, 2,...}}{
\eIf{$\tau_k \leq T$}{
Update with\\~~~~ \eqref{eq:update-dsgm} for \dsgd{} or \eqref{eq:update-dspl} for \dspl{}
}{ $x^{k + 1} = x^k$ }
}
\caption{Safeguarded \dsgd{}/\dspl{}}
\end{algorithm}
\end{multicols}
}

%
Intuitively, under worst-case scenarios, \textbf{Algorithm \ref{alg:safeguard}} will perform after $K$ iterations in a manner similar to its non-safeguarded counterpart. However, the maximum delay will be capped at $r^{-1}mK^\beta$, and the iteration count will be $K(1-rK^{-\beta})$.\\
\begin{thm}[Safeguarded {\dsgd}]\label{thm:guarddsgd}
Under the same conditions as \begin{rm}{\textbf{Lemma \ref{lem:dsmd-descent}}}\end{rm} as well as \ref{D1}, taking \textbf{1)} $\beta > 0, K> r^{1/\beta}$ or \textbf{2)} $\beta = 0, r < 1$, then letting $\gamma_k \equiv \gamma = \frac{\sqrt{K}}{\alpha \sqrt{\eta}} + \rho + \kappa + 4 \lambda, \eta = 1 + \frac{r}{K^\beta - r}$ for some $\alpha > 0$ and $k^{\ast}$ be uniformly chosen between iterations where $\tau_k \leq T = r^{-1}mK^\beta$,
\begin{align}
& \mathbb{E} [\| \nabla \psi_{1 / \rho} (x^{k^\ast}) \|^2] \leq\myfrac{2\rho}{\rho - 2 \lambda - \kappa} \Big[ \myfrac{\eta (\rho + 2\lambda) 
  D}{K} + \myfrac{\sqrt{\eta} D}{\sqrt{K} \alpha} + \myfrac{4 \eta^{3 / 2} \rho
  L_f (L_f + L_{\omega}) m \alpha}{\sqrt{K}} + \myfrac{8 \eta^2 \rho \lambda (L_f +
  L_{\omega})^2 m^2 \alpha^2}{rK^{1-\beta}} \Big], \nonumber
  \end{align}
where $D = \psi_{1 / \rho} (x^1) - \inf_x \psi (x)$.
\end{thm}

\begin{thm}[Safeguarded {\dspl}]\label{thm:guarddspl}
Under the same conditions as \begin{rm}{\textbf{Lemma \ref{lem:dspl-descent}}}\end{rm} as well as \ref{D1}, taking \textbf{1)} $\beta > 0, K> r^{1/\beta}$ or \textbf{2)} $\beta = 0, r < 1$, then letting $\gamma_k \equiv \gamma = \frac{\sqrt{K}}{\alpha \sqrt{\eta}} + \rho + \kappa + 6 \lambda, \eta = 1 + \frac{r}{K^\beta - r}$ for some $\alpha > 0$ and $k^{\ast}$ be uniformly chosen between iterations where $\tau_k \leq T = r^{-1}mK^\beta$,
\begin{align}
& \mathbb{E} [\| \nabla \psi_{1 / \rho} (x^{k^\ast}) \|^2]
  \leq\myfrac{2\rho}{\rho - 2 \lambda - \kappa} \Big[ \myfrac{\eta (\rho + 4\lambda) D}{K} + \myfrac{\sqrt{\eta} D}{\sqrt{K}
  \alpha} + \myfrac{2 \sqrt{\eta} \rho L_f \alpha}{\sqrt{K}} + \myfrac{6 \eta^2 \rho \lambda
  (L_f + L_{\omega})^2 m^2 \alpha^2}{rK^{1-\beta}} \Big]. \nonumber
  \end{align}
where $D = \psi_{1 / \rho} (x^1) - \inf_x \psi (x)$.
\end{thm}

\begin{rem}
\textbf{Theorem \ref{thm:guarddsgd}} and \textbf{\ref{thm:guarddspl}} show that by employing a safeguarding step, both {\dsgd} and {\dspl} can achieve delay-independent rates. 
It is also interesting to see how the choice of $\beta$ and $r$ affects performance.
 To recover a convergence rate of $\mathcal{O}(K^{-1/2})$, it is sufficient to have $\beta \leq 1/2$.
 If we set $\beta > 0$, then we skip $rK^{-\beta}$ of all the iterations and incur a penalty of up to $\eta^{3/2}$ in {\dsgd} and $\eta^{1/2}$ in {\dspl}. However, this loss becomes negligible for large $K$, as  $\eta = 1 + \frac{r}{K^\beta - r} \rightarrow 1$.
 Alternatively, if we set $\beta = 0$,  {\dsgd} achieves $\mathcal{O}({\sqrt{m}}/{\sqrt{K}} + {m}/{K})$ rate with $\alpha = 1/\sqrt{m}$ while {\dspl} yields a rate of $\mathcal{O}({1}/{\sqrt{K}} + {m^2}/{K})$ with $\alpha = 1$.  This setting aligns with  \cite{cohen2021asynchronous, koloskova2022sharper} and achieves optimal rate on $m$.
However, the penalty from $\eta > 1$ is non-negligible and adversely affects the overall convergence rate by a constant factor. 
 Therefore, we can strike a balance in practice by choosing  $\beta $ in $(0, 1/2]$ and allow delays of up to $\mathcal{O}(K^\beta)$.
\end{rem} 




\section{Experiments\label{sec:Experiments}}
This section performs numerical experiments on the {robust phase
retrieval} problem to demonstrate the efficiency of our methods. 
Given a measuring matrix $A\in\mathbb{R}^{m\times n}$ and a set of
observations $b_{i}\approx|\langle a_{i},\hat{x}\rangle|^{2},1\le i\le m$ ($m$ in this section represents the number
of samples),
robust phase retrieval aims to recover the true signal $\hat{x}$ from
\[
 \min_{x\in\mathbb{R}^{n}}\frac{1}{m}\sum_{i=1}^{m}|\langle a_{i},x\rangle^{2}-b_{i}| + \delta_{\{x:\|x\| \leq M\}},
\]
where $\delta_S$ denotes the set indicator function and the $\ell_{1}$ loss function improves robustness. Our experiment contains three parts. The first part profiles algorithms in an asynchronous environment simulated via MPI; our second experiment runs sequentially with simulated delays from common distributions; our last experiment also runs in the simulated environment and demonstrates the effectiveness of safeguarding step under adversarially chosen delays.
\subsection{Experiment setup}
\textbf{Synthetic data}. For the synthetic data, we take $m=300,n=100$
in the experiments of simulated delay and $m=1500,n=500$ in the asynchronous
environment. Data generation follows the setup of {\cite{deng2021minibatch}},
where, given some $\kappa\geq1$, we compute
$A=QD,Q\in\mathbb{R}^{m\times n},q_{ij}\sim\mathcal{N}(0,1)$ and
$D=\tmop{\text{{diag}}}(d),d\in\mathbb{R}^{n},d_{i}\in[1/\kappa,1]$ for all $i$.
Then we generate a true signal $\hat{x}\sim\mathcal{N}(0,I)$ and
obtain the measurements $b$ using formula $b_{i}=\langle a_{i},\hat{x} \rangle^{2}$.
Last we randomly choose $p_{\text{fail}}$-fraction of the measurements
and add $\mathcal{N}(0,25)$ to them to simulate data corruption.

\textbf{Real-life data}. The real-life data is generated from \tmverbatim{\texttt{zipcode}}
dataset, where we vectorize a 16$\times$16 hand-written digit
from {\cite{hastie2009elements}} and use it as the signal. The
measuring matrix $A$ comes from a normalized Hadamard matrix $H\in\mathbb{R}^{256\times256}$: we generate three diagonal matrices $S_{j}=\tmop{\text{{diag}}}(s_{j}),j=1,2,3$; each element of $s_j \in\Rbb^{256}$ is taken from $\{-1,1\}$
randomly and we let $A=H[S_{1},S_{2},S_{3}]^{\trans}\in\mathbb{R}^{768\times256}$.
Finally $p_{\text{fail}}$-fraction of the observations are set 0.
\begin{enumerate}[label=\textbf{\arabic*)}, ref=\textbf{\arabic*)}, leftmargin=*]
\item \textbf{Dataset}. In the asynchronous environment, we keep up with
{\cite{xu2021distributed}} setting $\kappa=1,p_{\text{fail}}=0$
and in the simulated environment, we follow {\cite{deng2021minibatch}}
setting $\kappa\in\{1,10\}$ and $p_{\text{fail}}\in\{0.2,0.3\}$.
\item \textbf{Initial point and radius}. Synthetic data: we generate $x'\sim\mathcal{N}(0,I_{ n})$
and start from $x^{1}=\frac{x'}{\|x'\|}$; \tmverbatim{\texttt{zipcode}}
data: we generate $x'\sim\mathcal{N}(\hat{x},I_{n})$ and take $x^{1}=10x'$. $M = 1000\|x^1\|$.
\item \textbf{Stopping criterion}. We run algorithms for 400 epochs ($K=400m$).
In the asynchronous environment,
algorithms run until reaching the maximum iteration. In the simulated environment,
algorithms stop if $f \leq 1.5f(\hat{x})$. 
When $f$ contains corrupted measurements, $f(\hat{x})>0$.
\item \textbf{Stepsize}. We set $\gamma=\sqrt{K/\alpha}$,
where $\alpha\in\{0.1,0.5,1.0\}$ in the asynchronous environment,
$\alpha\in[10^{-2},10^{1}]$ for synthetic data
and $\alpha\in[10^{1},10^{2}]$ for the \tmverbatim{\texttt{zipcode}} dataset.
\item \textbf{Simulated delay}. In the simulated environment, we generate
$\tau_{k}$ from two common distributions from literature,
which are geometric $\mathcal{G}(p)$ and Poisson
$\mathcal{P}(\lambda)$ {\cite{zhang2015staleness}}. After the delay is generated, it is truncated by twice the mean of the distribution. \label{simu-delay}
\item \textbf{Adversarial delay}. We let delay happen at the last iteration of each epoch and use the information of $x^1$ to update. Safeguarding parameter is set to $T = 0.1 \sqrt{K}$.
\item \textbf{Trade-off between computation and communication}. In the
asynchronous environment, the numerical linear algebra on the worker
uses a raw implementation (not importing package) to balance the cost
of gradient computation and communication.
\condsep
\end{enumerate}
\condsep
\subsection{Asynchronous environment}
Our first experiment runs in an asynchronous environment
implemented by MPI \texttt{Python} interface and is profiled on an \texttt{Intel(R)
Xeon(R) CPU E5-2640 v4 @ 2.40GHz} machine with 10 cores and 20 threads. This experiment runs on a single machine to verify our theoretical finding rather than to test the algorithm's real performance on a specific distributed architecture.

\begin{figure}[h]
\centering
\includegraphics[scale=0.19]{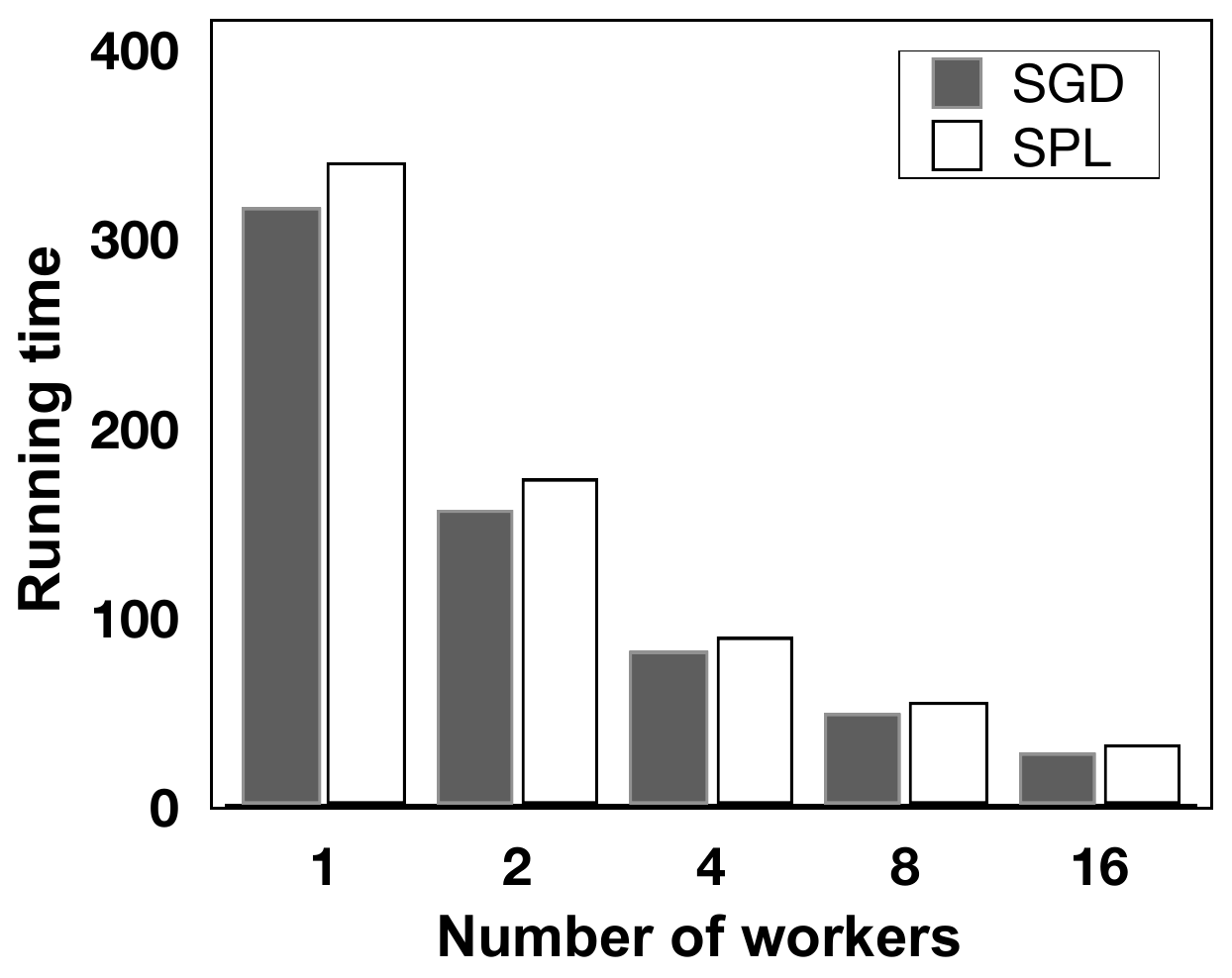}\qquad{}
\includegraphics[scale=0.185]{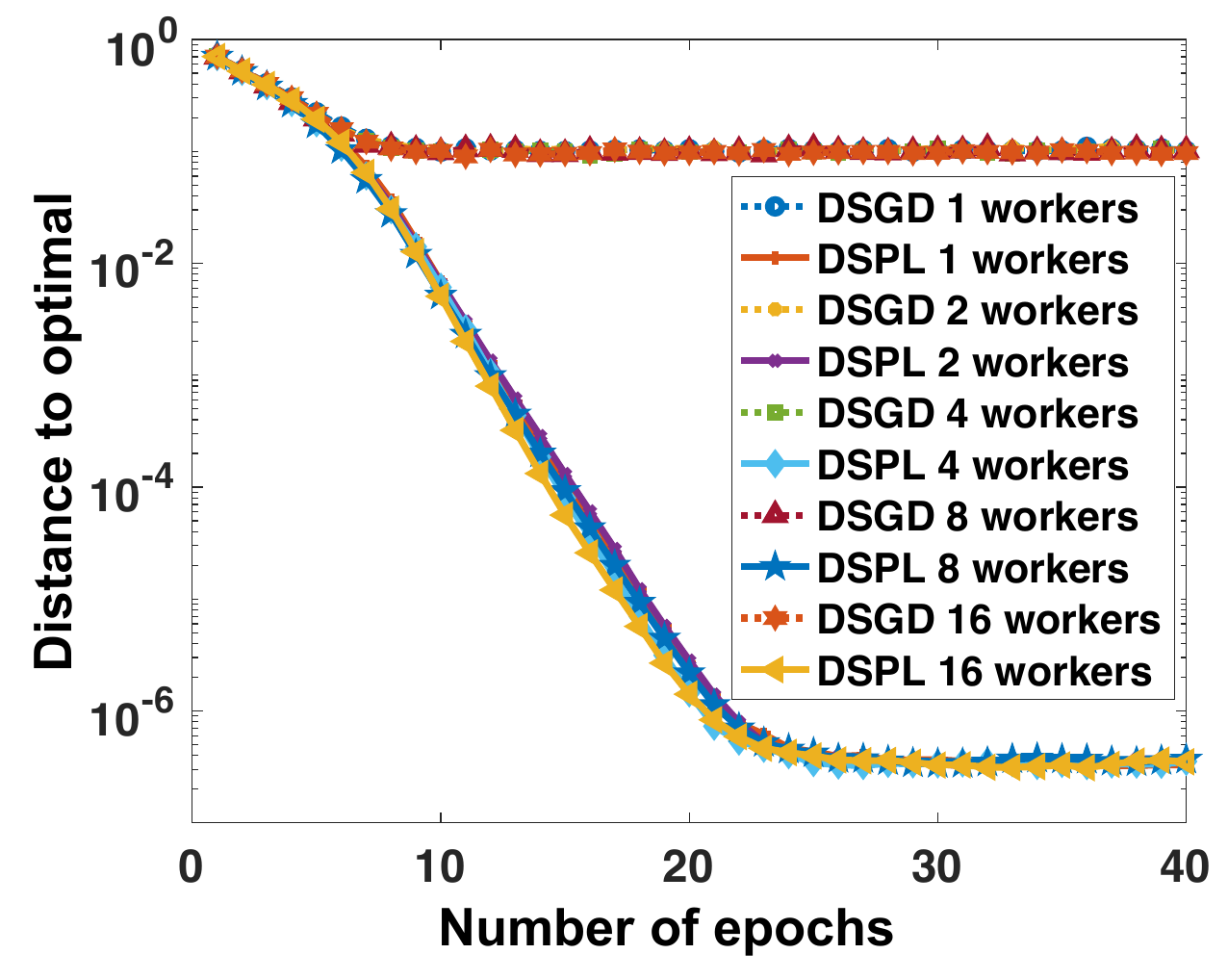}\qquad{}
\includegraphics[scale=0.185]{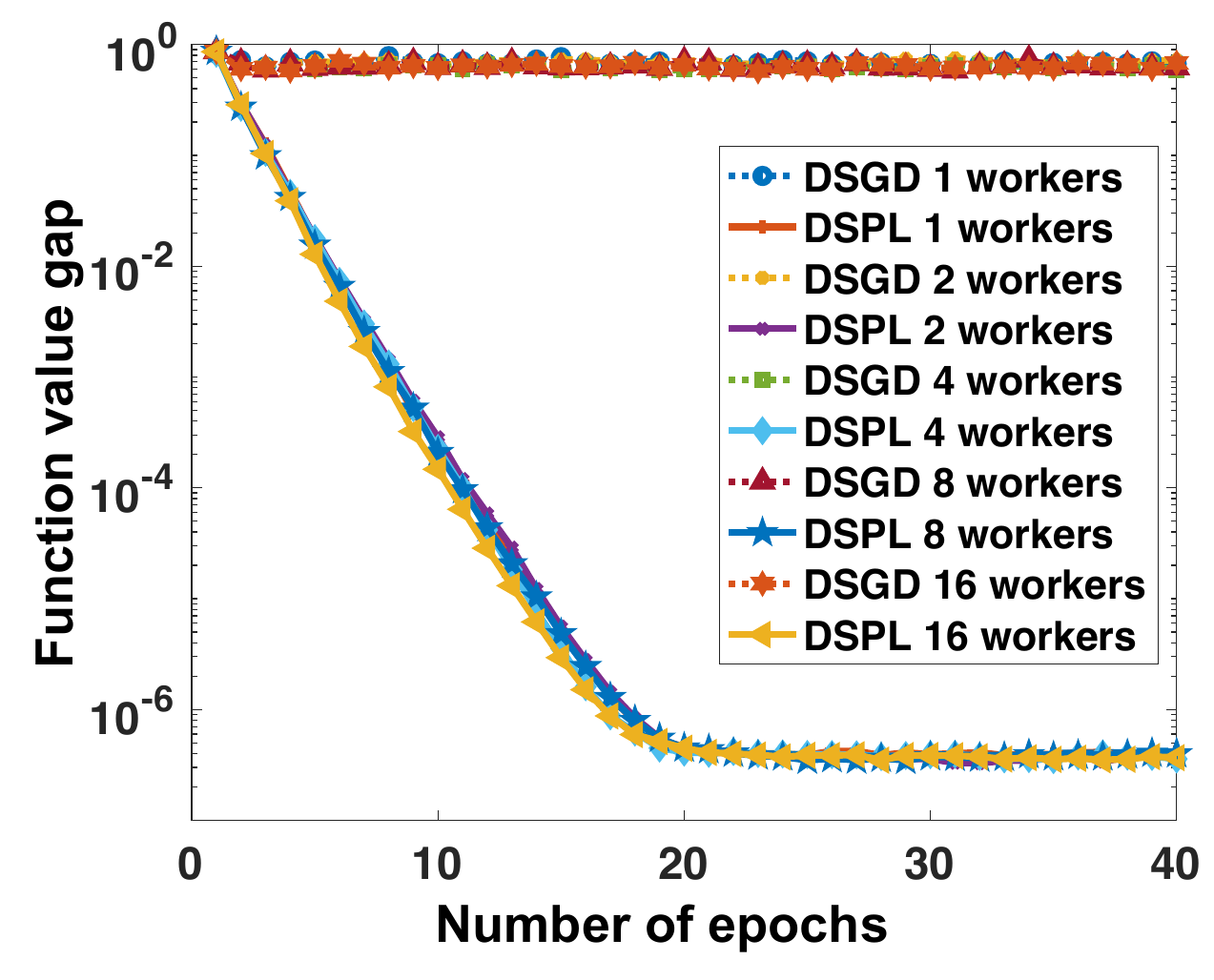}\caption{First: speedup in time and the number
of workers. Second: progress of $\|x^{k}-\hat{x}\|$ in the first 40
epochs given $\alpha=0.1$. Third: $f(x^{k})-f(\hat{x})$
in the first 40 epochs given $\alpha=0.5$. For more details about the two figures on the right, please refer to Figure \ref{fig1-split} in the appendix. \label{fig1}}
\end{figure}

The first figure plots the wall-clock time (in seconds) for {\dsgd} and {\dspl} 
to complete 400 epochs when the number of workers increases. 
It is observed that both algorithms exhibit speed-up with more workers and
note that {\dspl}
takes more time than {\dsgd} due to the need to pass the function value to the master and slightly more expensive updates. But as the second and the third figures suggest, this extra cost is justified by the superior convergence: 
in the first several epochs {\dspl} reaches a high accuracy of $10^{-6}$
in both function value and distance to the optimal solution, while
{\dsgd} stagnates at a relatively low-accuracy
solution of $10^{-2}$. These observations suggest that {\dspl}
offers better convergence behavior than the methods based
on subgradients. Finally, our experiments suggest both {\dsgd} and {\dspl} are not sensitive to the increase in the number of workers when there are relatively few workers.
\subsection{Simulated environment}
The second part of our experiment compares the performance between {\dsgd} and {\dspl} and is based on the simulated delay, where the algorithm runs sequentially and the gradient information is computed from the previous iterates.
%

\begin{figure*}[!htb]
\begin{centering}
\includegraphics[scale=0.2]{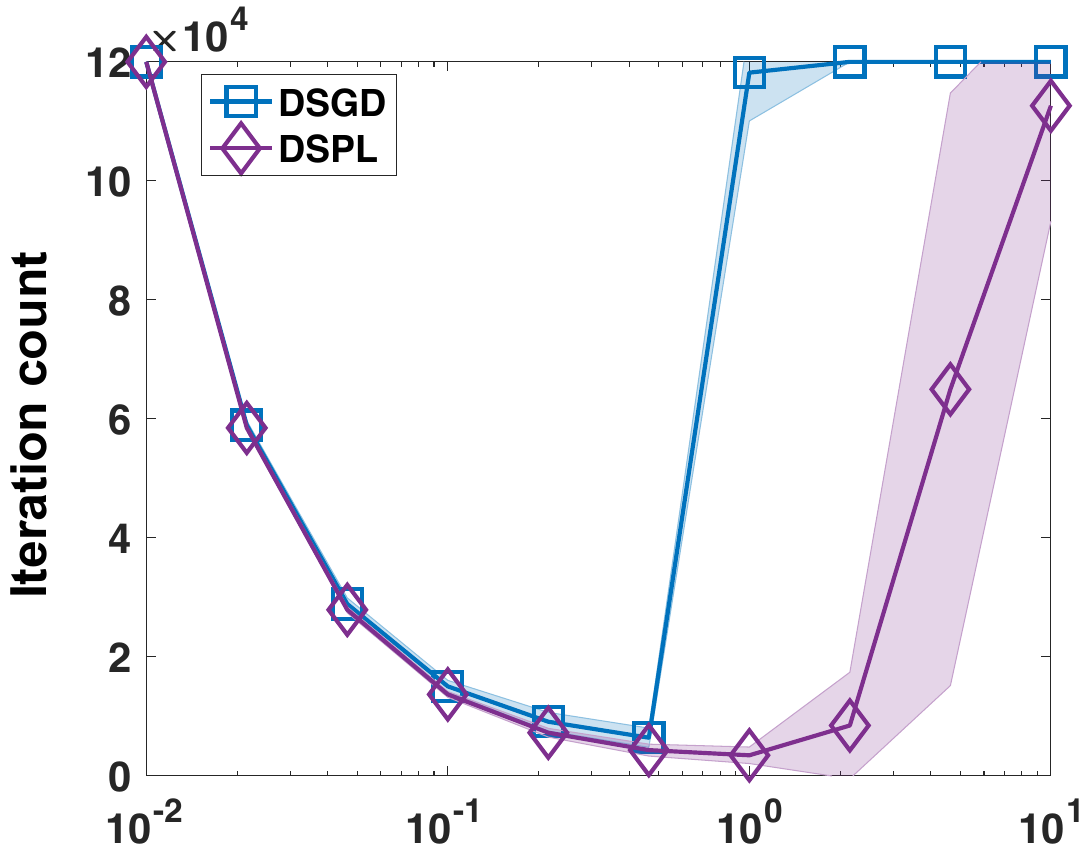}
\includegraphics[scale=0.2]{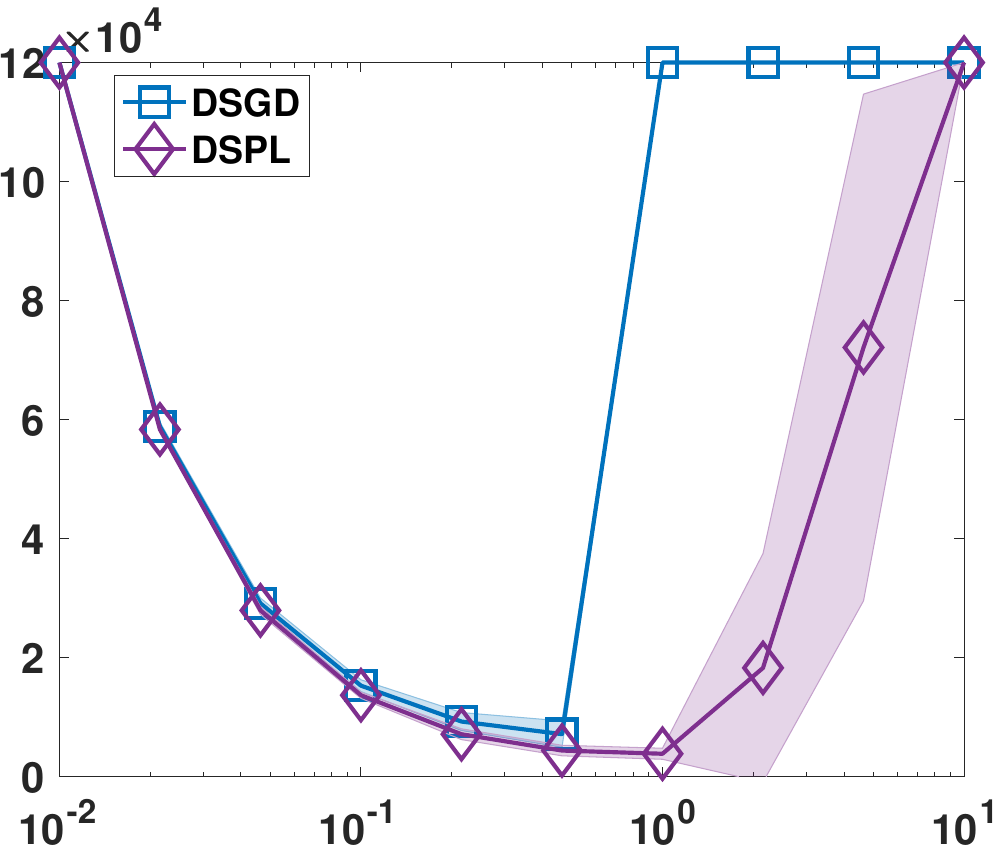}
\includegraphics[scale=0.2]{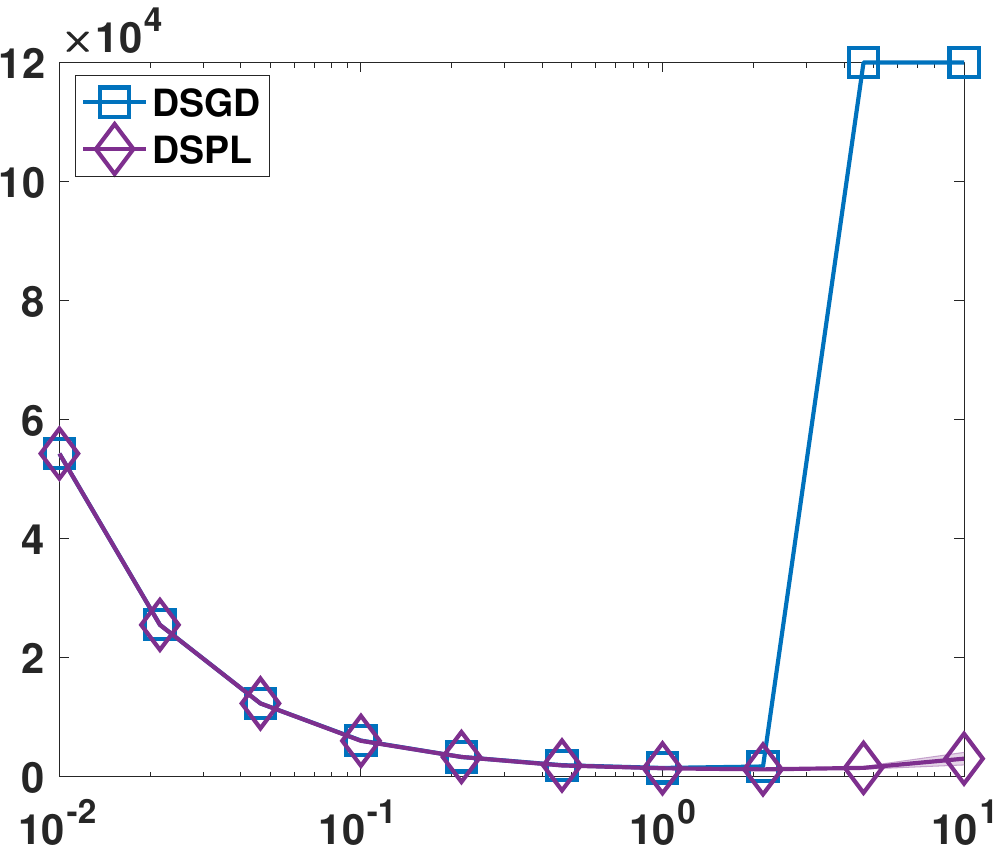}\includegraphics[scale=0.2]{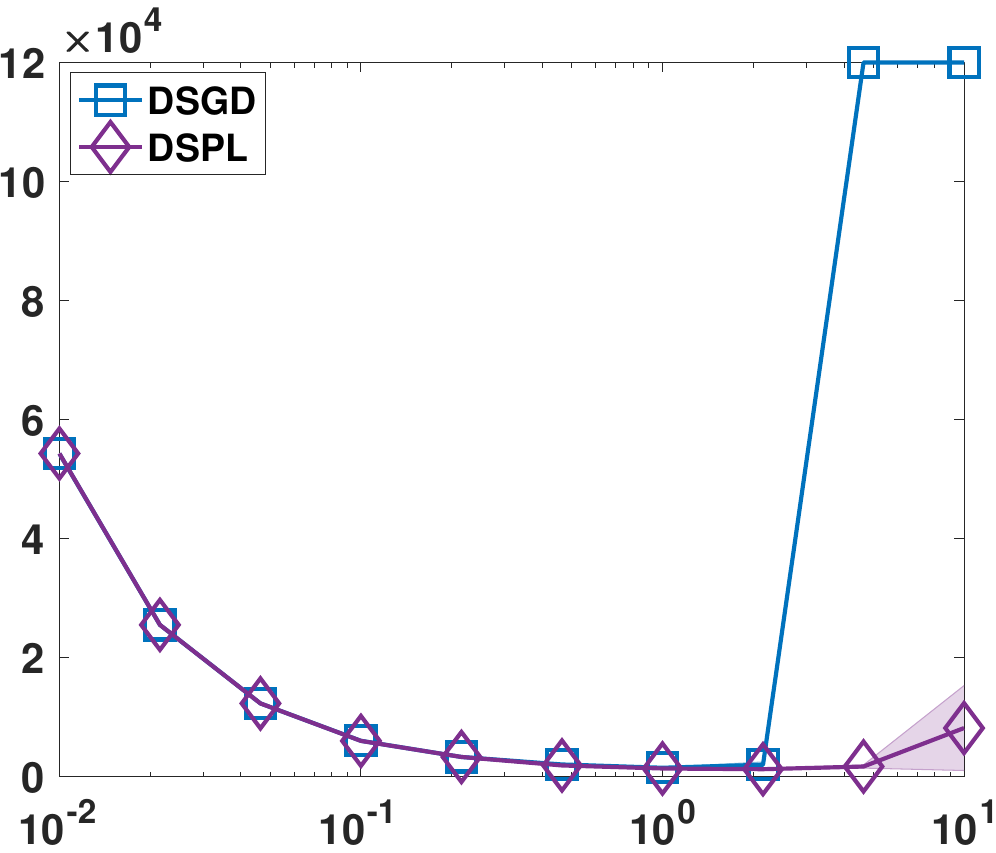}
\par\end{centering}
\caption{\label{fig3}Left two: Geometric delay $(\kappa,p_{\text{{fail}}})=(1,0.3),2p^{-1}\in\left\{ 28,47\right\}$;
right two: Poisson delay $(\kappa,p_{\text{{fail}}})=(10,0.2),2\lambda\in\left\{ 28,47\right\} $. x-axis: parameter $\alpha$; y-axis: number of iterations. The definition of delay is given in experiment setup \ref{simu-delay}. }
\end{figure*}

Figure~\ref{fig3} plots the number of iterations for each
algorithm to converge under different datasets and delay parameters. We see that in spite of delays, {\dspl} admits a wider range of stepsize parameters ensuring convergence than {\dsgd}, and the performance slightly deteriorates as the delay increases.

\subsection{Adversarial delay}
The final experiment verifies the efficacy of our safeguarding step when delays are introduced in an adversarial setting. We evaluate the performance of {\dsgd}, {\dspl}, both with and without the safeguarding step, under the delay patterns we have generated.

\begin{figure*}[!htb]
\begin{centering}
\includegraphics[scale=0.2]{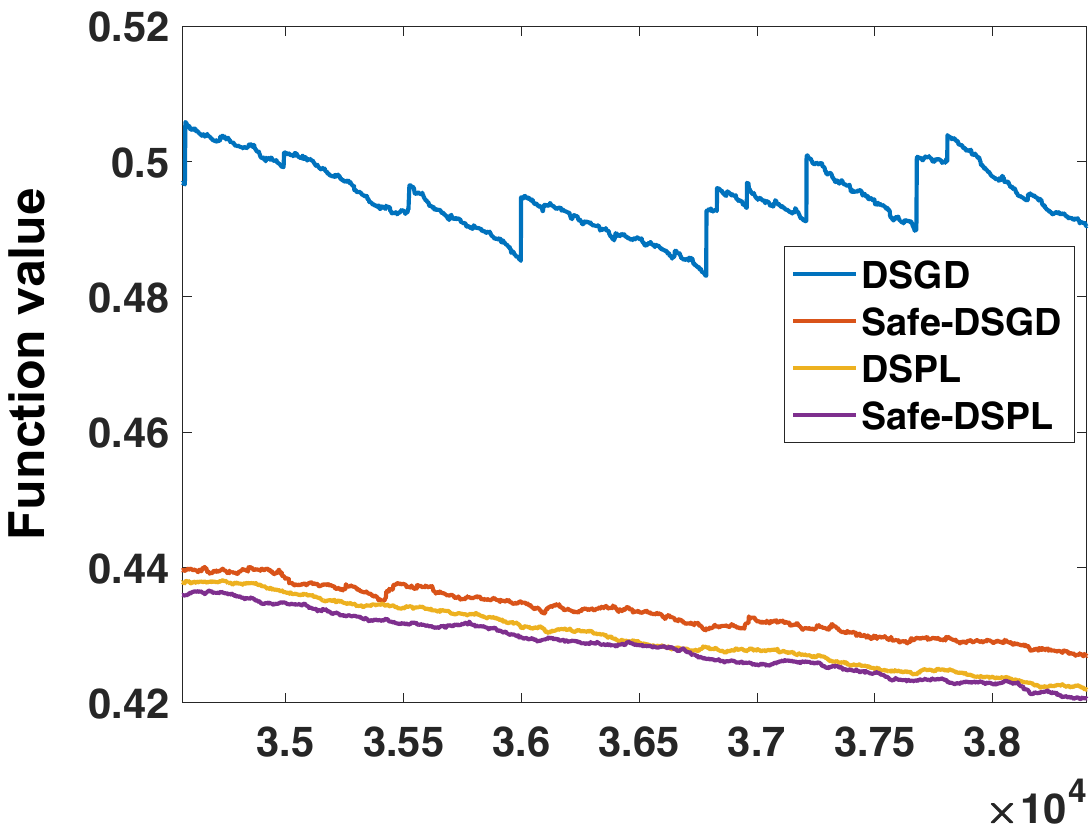}
\includegraphics[scale=0.2]{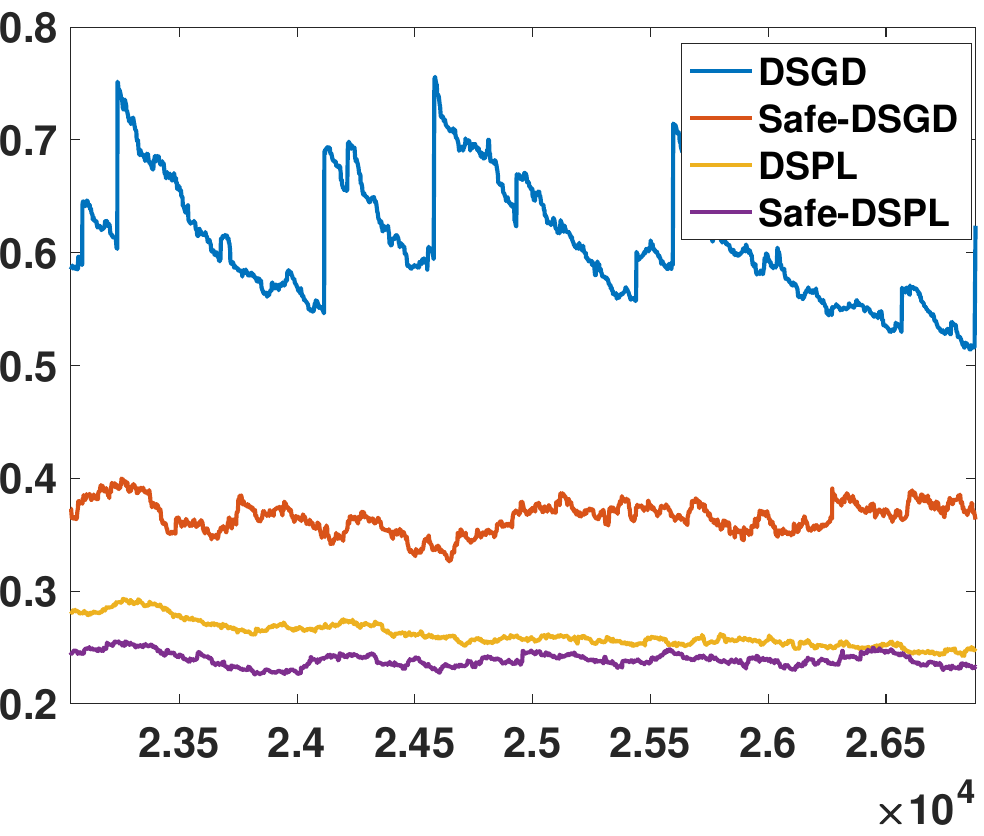}
\includegraphics[scale=0.2]{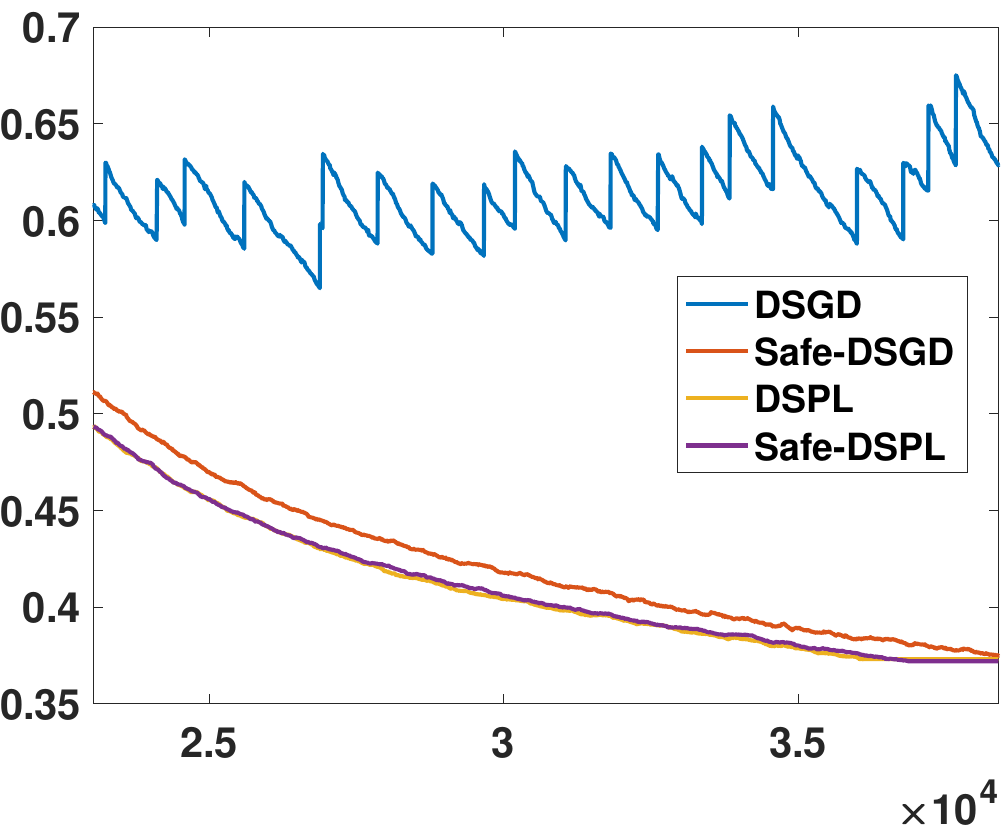}
\includegraphics[scale=0.195]{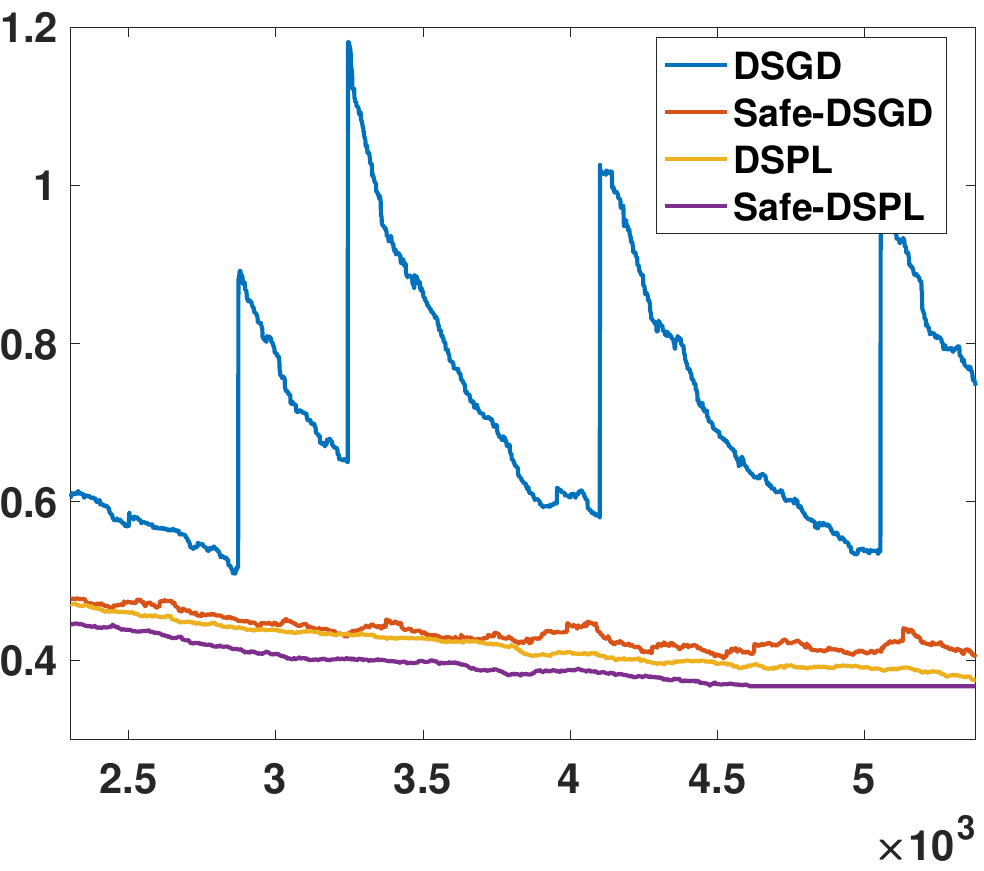}
\par\end{centering}
\caption{\label{fig4} From left to right: \texttt{zipcode} data, $p_{\text{{fail}}}=0.2, \alpha \in \{10, 100\}$, $p_{\text{{fail}}}=0.3, \alpha \in \{10, 100\}$. x-axis: iteration count; y-axis: $f(x^k)$.}
\end{figure*}
Figure~\ref{fig4} clearly illustrates the superior performance of our method incorporating the safeguarding step.  As suggested by our theory, {\dsgd} is notably sensitive to delays of $\mathcal{O}(K)$. However, upon discarding outdated information, {\dsgd} exhibits much greater robustness. Interestingly, even under our adversarial setup, the performance of {\dspl} remains acceptable, albeit slightly inferior to its safeguarded version, which aligns well with our theoretical findings.

\section{Conclusions \label{sec:Conclusions}}

We offer a sharp analysis of delayed stochastic algorithms for weakly convex optimization, discussing the widely utilized {\dsgd} method and introducing the novel {\dspl} method for problems with a composition structure. Through careful examination of delay factors, we propose a straightforward safeguarding approach to eliminate the effect of delays, and instead derive bounds depending on  the number of agents in the distributed environment. This makes our algorithms resilient to arbitrary delays. A promising future direction is the application of these prox-linear methods in more diverse distributed settings, such as decentralized networks.

\section{Acknowledgement}
The authors thank the anonymous reviewers for their constructive suggestions. This research is partially supported by National Natural Science Foundation of China (NSFC-72150001, 11831002).

\renewcommand \thepart{}
\renewcommand \partname{}

\bibliographystyle{plain}
\bibliography{ref.bib}


\doparttoc
\faketableofcontents
\part{}

\newpage
\appendix
\addcontentsline{toc}{section}{Appendix}
\part{Appendix} 
\parttoc

\paragraph{Structure of the appendix}
In this paper, we strive to provide a comprehensive treatment of delayed stochastic algorithms for stochastic weakly convex optimization in both theoretical and practical aspects.

 \textbf{1) Theoretically}, we choose to employ a more general Bregman divergence context in our proof. This broader context covers the primary results for the Euclidean setting discussed in the main section. Specifically, this context will be invoked in our analysis for {\dspl} method to accommodate real-life applications of prox-linear method and by relaxing the Lipschitz condition \ref{C1}.

 \textbf{2) Practically}, we provide a detailed discussion on how to implement the {\dspl} algorithm, both in Euclidean and Bregman context. We specifically address the selection of the Bregman divergence kernel and propose efficient subroutines to solve proximal subproblems. Additionally, in view of  the wide usage of momentum techniques in stochastic optimization, we analyze \dsepl{}, the momentum variant of \dspl{}. While this is not the central contribution of our paper, we believe it represents an important step for making \dspl{} applicable to real-world problems.

\begin{figure*}[h]
\centering
\includegraphics[scale=0.5]{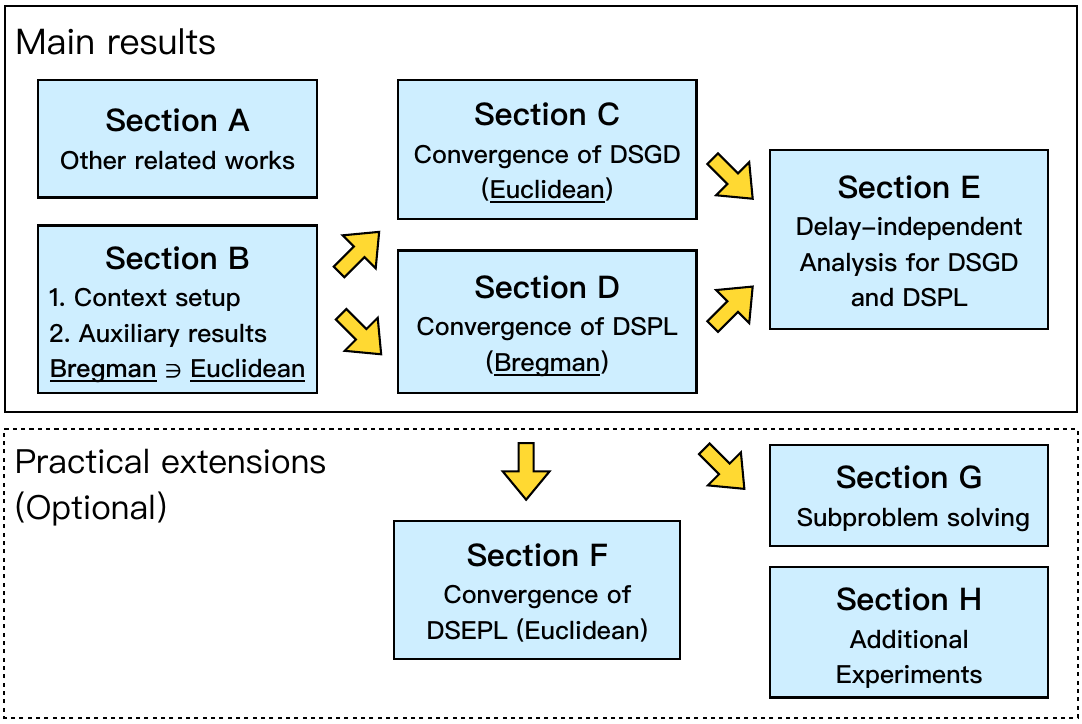}
\caption{Structure of the appendix}
	
\end{figure*}
The appendix is organized as follows. 

\underline{The first five sections} address the theoretical aspects of delayed stochastic algorithms.

In Section~\ref{app:related}, we first conduct an extended review of other related works.
In Section~\ref{app:intro}, we introduce the Bregman context with some auxiliary results.
In Section~\ref{app:dsmd}, we perform convergence analysis for the {\dsgd} method, and
 Section~\ref{app:dspl} presents a more general Bregman \dspl{} that relaxes the Lipschitzness assumption.  
Section~\ref{app:independent} presents the safeguarded variant of our algorithms that are robust to arbitrary delays.

\underline{The last three sections} are devoted to more practical aspects.

In Section~\ref{sec:dsepl}, we present the convergence analysis of {\dsepl}, namely, the momentum variant of {\dspl} using variable extrapolation.
Section~\ref{app:subproblem} discusses how to construct the kernel and solve the Bregman proximal subproblems. 
Finally, Section~\ref{app:exp} displays  additional experiment results.
\section{More on the related works} \label{app:related}

First, we review the literature on stochastic weakly-convex optimization. 
The Prox-Linear (PL) method \cite{lewis2016proximal,drusvyatskiy2018efficiency,fletcher1982model}
has received more attention lately.
The seminal work \cite{davis2019stochastic}
conducts a novel complexity analysis using the Moreau envelope as the
potential function. They show that {\sgm} (and many other stochastic
algorithms) achieves an $\Ocal(\tfrac{1}{\varepsilon^{4}})$ complexity
in terms of convergence to the proximity of approximate stationary
points. \cite{li2019incremental} analyzes an incremental subgradient
method for finite-sum problems.  \cite{zhang2021stochastic}
extends the prox-linear algorithm to the finite-sum setting and provides
analysis based on variance-reduction. {\sgm} with momentum is studied
in \cite{mai2020convergence} and \cite{deng2021minibatch}
reveals that {\spl} can be accelerated by minibatching even for nonsmooth objective functions. \cite{deng2021minibatch}
also employs heavy-ball momentum to further improve the model-based
optimization.

\myquad There is substantial literature on distributed and asynchronous optimization.
We refer to the seminal work \cite{bertsekas2015parallel}. In addition to 
the aforementioned asynchronous issues, another important research
direction concerns reducing the network communication cost by gradient
compression. For example, see~\cite{karimireddy2019error}. Besides the centralized setting, there is also growing interest in decentralized
algorithms. Recently, \cite{chen2021distributed} presents a decentralized
{\sgm} in the network where nodes exchange parameters and subgradients
locally. However, their method requires all the nodes to update synchronously
and only proves asymptotic convergence of {\sgm}. 
An extensive study of decentralized optimization is beyond our work. Therefore, we refer interested readers to \cite{lian2017can,lian2018asynchronous,zeng2018nonconvex} for more recent advances.
\section{Bregman context and auxiliary results} \label{app:intro}

We initiate the theoretical discussion by introducing the Bregman context, which includes the concepts of  Bregman divergence, relative Lipschitzian property, and Bregman Moreau envelope. Following the provision of requisite definitions, we present some auxiliary results that will be used frequently in our analysis. 

\subsection{Context setup}

\paragraph{Bregman divergence and relative Lipschitzness}
Given a Legendre function $d(\cdot)$
{\citeapp{rockafellar1970convexdup}} that is proper, closed, strictly convex and
essentially smooth, it induces the Bregman divergence defined by
\[ V_d (x, y) \assign d (x) - d (y) - \langle \nabla d (y), x - y \rangle \]
and $d$ is called the kernel of $V_d$. Since $d$ is strictly convex, we
know $V_d (x, y) \geq 0$ and the equality holds if and only if $x = y$. 
The notion of Bregman divergence greatly enhances the coverage of the
algorithms and has recently been adopted in the context of weakly convex
optimization {\citeapp{zhang2018convergencedup,davis2018stochasticdup}} to
tackle a broader class of nonsmooth nonconvex optimization problems. In this paper, we mainly leverage the concept of relative Lipschitzian property to extend the analysis of delayed
stochastic algorithms. First, we give its formal definition. 
\begin{definition}[Relative Lipschitzian property]
\citeapp{davis2018stochasticdup} A function $f$ satisfies $L$-relative Lipschitzian property to kernel $d$ if for all $x, y \in \dom d$, 
  \begin{equation}
  	f (x) - f (y) \leq L \sqrt{2 V_d (y, x)}. \label{eqn:rel-lip}
  \end{equation}
\end{definition}


{\tmstrong{Bregman proximal mapping and Bregman envelope}}

As in conventional weakly convex optimization, we need an approximate measure
of stationarity in the non-Euclidean setting. A natural choice is the Bregman
envelope {\citeapp{davis2018stochasticdup}}. Assume that $d$ is 1-strongly convex, then for any $\rho > 0$, the Bregman envelope of $f$ with respect to kernel $d$ is
\[ \psi^d_{1 / \rho} (x) \assign \min_y  \{ f (y)  + \rho V_d (y, x) \} \]
and the Bregman proximal mapping is represented using
\[ \prox_{\psi / \rho}^d (x) \assign \argmin_y  \{ f (y) + \rho V_d (y,
   x) \} . \]
We sometimes refer to $\prox_{\psi / \rho}^d (x)$ as $\hat{x}$
and proximal distance $\mathbb{E} [V_d (\hat{x}^k, x^k)]$ has been proven a proper measure of the convergence of
stochastic algorithms {\citeapp{davis2018stochasticdup} under specific choice of $d$ and $\rho$.

\paragraph{Connection with Euclidean geometry}
One advantage of using Bregman context is that it subsumes the Euclidean case. If we take $d(x) = \frac{1}{2}\|x\|^2$, then $V_d(x, y) = \frac{1}{2}\|x - y\|^2$ and relative Lipschitzian property becomes standard Lipschitz condition.  Particularly we have the following relation between $V_d(\hat{x}, x)$ and Moreau envelope.
\[ V_d (\hat{x}^k, x^k) = \frac{1}{2}\| \hat{x}^k - x^k\|^2 = \frac{1}{2\rho^2} \| \nabla \psi^d_{1 / \rho} (x) \|^2. \] When the context is clear, we will use $\psi = \psi^d$ directly if $d(x) = \frac{1}{2} \| x \|^2$. 
 In the next section, we present several useful auxiliary results that appear frequently in the proof.
\subsection{Auxiliary lemmas} \label{app:aux}

In this section, we present all the auxiliary lemmas that will be used in the
proof of our main results. While some of these lemmas are widely recognized in the field, we reproduce them here to ensure that our work remains self-contained. To begin, we introduce the well-known three-point lemma.

\begin{lem}[Three point lemma] \label{lem:three-point} Let $f$ be a closed convex function and define
  \[ z = \argmin_x \{ f (x) + \gamma V_d (x, y) \} . \]
  for some $y \in \dom d$. Then we have
  \[ f (z) + \gamma V_d (z, y) \leq f (x) + \gamma V_d (x, y) - \gamma V_d (x, z), \forall x \in
     \dom d. \]\
\end{lem}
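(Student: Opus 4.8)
The plan is to derive the inequality directly from the first-order optimality condition at $z$ together with the convexity of $f$, and then convert the resulting linear term into Bregman divergences via the three-point identity. First I would record the optimality condition: since $z$ minimizes the convex function $x \mapsto f(x) + \gamma V_d(x, y)$, and the regularizer $V_d(\cdot, y)$ is differentiable on $\inte \dom d$ (because $d$ is Legendre, hence essentially smooth, so any minimizer lies in the interior), the subdifferential sum rule yields a subgradient $g \in \partial f(z)$ with $g + \gamma \nabla_x V_d(z, y) = 0$. Using the definition $V_d(x, y) = d(x) - d(y) - \inner{\nabla d(y)}{x - y}$, one has $\nabla_x V_d(z, y) = \nabla d(z) - \nabla d(y)$, so that $-g = \gamma(\nabla d(z) - \nabla d(y))$.

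Next I would invoke the convexity of $f$: for every $x \in \dom d$,
\[ f(x) \geq f(z) + \inner{g}{x - z}, \]
which rearranges to $f(z) \leq f(x) - \inner{g}{x - z} = f(x) + \gamma \inner{\nabla d(z) - \nabla d(y)}{x - z}$ after substituting the expression for $g$.

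The key algebraic step is the three-point identity for the Bregman divergence,
\[ \inner{\nabla d(z) - \nabla d(y)}{x - z} = V_d(x, y) - V_d(x, z) - V_d(z, y), \]
which follows by expanding each $V_d$ term from its definition and checking that all the $d$-values cancel, leaving precisely the linear term on the left. Substituting this into the previous inequality and moving $\gamma V_d(z, y)$ to the left-hand side yields the claimed bound $f(z) + \gamma V_d(z, y) \leq f(x) + \gamma V_d(x, y) - \gamma V_d(x, z)$.

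I do not anticipate a genuinely hard step here, as this is a standard lemma. The only point requiring care is justifying that $z \in \inte \dom d$ so that $\nabla d(z)$ is well-defined and the sum rule applies cleanly; this is exactly where essential smoothness of the Legendre kernel is used, guaranteeing that the minimizer cannot sit on the boundary where $\nabla d$ blows up. Everything else is a direct consequence of convexity and the expansion of the Bregman divergence.
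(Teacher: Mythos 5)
Your proof is correct. Note that the paper itself gives no proof of this lemma: it is stated in the appendix's ``Auxiliary Lemmas'' section with the remark that such results are well-known and reproduced only for completeness. Your argument --- first-order optimality at $z$ combined with the subdifferential sum rule, convexity of $f$, and the three-point identity $\inner{\nabla d(z)-\nabla d(y)}{x-z}=V_d(x,y)-V_d(x,z)-V_d(z,y)$ --- is exactly the standard derivation (going back to Chen--Teboulle-style analyses of Bregman proximal steps), and the algebra checks out. Your attention to why $z$ lies in $\inte \dom d$ (essential smoothness of the Legendre kernel forcing $\norm{\nabla d}$ to blow up at the boundary) is the right technical point to flag; the only further pedantic caveat one could add is that the sum rule and the interiority argument implicitly assume $\dom f \cap \inte\dom d \neq \emptyset$, a constraint qualification the paper (like most of this literature) leaves tacit.
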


The following lemma bounds, for a deterministic function, the proximal step by the 
regularization term and relative Lipschitzian property.

\begin{lem}[Bounding the proximal step] \label{lem:bound-iter-diff-determ}
Given the divergence $V_d$ generated by some 1-strongly convex function $d$. Let the function $f(x)$ satisfy the relative Lipschitzian property \eqref{eqn:rel-lip}. For any $\gamma > 0$ such that $f + \gamma d$ is strongly convex, if we define
  \[ x^+ = \argmin_y \{ f (y) + \gamma V_d (y, x) \}, \]
  then
  \begin{equation}
  	\sqrt{V_d (x^+, x)} \leq \sqrt{2}\gamma^{- 1} L \label{iter-diff-1}
  \end{equation}
  and
  \begin{equation}
  	\| x^+ - x \| \leq 2 \gamma^{- 1} L. \label{iter-diff-2} 
  \end{equation}
\end{lem}

\begin{proof}
	By the optimality of $x^+$ and relative Lipschizian property, we have
\[ f (x^+) + \gamma V_d (x^+, x) \leq f (x) \] 
and
\begin{equation}
	\gamma V_d (x^+, x) \leq f (x) - f (x^+) \leq L \sqrt{2V_d (x^+, x)}.  \label{iter-diff-3}
\end{equation}\\
Dividing both sides of \eqref{iter-diff-3} by $\sqrt{V_d (x^+, x)}$ gives \eqref{iter-diff-1}; 
\eqref{iter-diff-2} uses $V_d (x^+, x) \geq
\frac{1}{2} \| x^+ - x \|^2$.
\end{proof}

\begin{lem}[Bounding the proximal step in expectation] \label{lem:bound-iter-diff}
Given the divergence $V_d$ generated by some 1-strongly convex function $d$. Let convex stochastic function $f(x, \xi)$ satisfy the relative Lipschitzian property  \eqref{eqn:rel-lip} with $L=L(\xi)$ for $\xi\sim\Xi$. For any $\gamma > 0$, if we define
  \[ x^+ = \argmin_y \{ f (y, \xi) + \gamma V_d (y, x) \}, \]
  then
  \begin{equation}
  \tfrac{1}{\sqrt{2}}\Ebb_{\xi\sim\Xi}[\|x^+-x\|]	\leq \Ebb_{\xi\sim \Xi} [\sqrt{V_d (x^+, x)}] \leq \sqrt{2}\gamma^{- 1} \Ebb_{\xi\sim \Xi}[L(\xi)] \label{iter-diff-1}
  \end{equation}
  and
  \begin{equation}
  	\Ebb_{\xi\sim \Xi}[\| x^+ - x \|^2] \leq 4 \gamma^{- 2} \Ebb_{\xi\sim \Xi}[L(\xi)^2]. \label{iter-diff-2} 
  \end{equation}
\end{lem}

\begin{proof}
We can apply \textbf{Lemma \ref{lem:bound-iter-diff-determ}}. Then we have, for each $\xi\sim\Xi$, that  
\[\sqrt{V_d (x^+, x)} \leq \sqrt{2}\gamma^{- 1} L (\xi) \text{\quad and \quad} V_d (x^+, x) \leq 2\gamma^{-2} L^2 (\xi).\] Taking expectation with respect to $\xi$, the fact that $V_d(x^+,x)\geq \frac{1}{2}\|x^+-x\|^2$ completes the proof
\end{proof}

\section{Convergence Analysis of \dsgd} \label{app:dsmd} 
In this section, we present the convergence analysis of {\dsgd}. 
The proof follows a standard inexact potential reduction scheme and is done in the Euclidean setup, where \textbf{Lemma \ref{lem:three-point}} and \textbf{Lemma \ref{lem:bound-iter-diff}} hold for $d = \frac{1}{2}\|\cdot\|^2$.
\subsection{Proof of Lemma \ref{lem:dsmd-descent}}
Since $\rho > \lambda + \kappa$ and $\gamma_k \geq \rho$, by three-point lemma we get the following
two relations
\begin{align}
	& \langle g^{k - \tau_k}, x^{k + 1} \rangle + \omega (x^{k + 1}) +
  \frac{\gamma_k}{2} \| x^{k + 1} - x^k \|^2 \nonumber \\ 
  \leq{} & \langle g^{k - \tau_k}, 
  \hat{x}^k \rangle + \omega (\hat{x}^k) + \frac{\gamma_k}{2} \| \hat{x}^k -
  x^k \|^2 - \frac{\gamma_k - \kappa}{2} \| \hat{x}^k - x^{k + 1} \|^2  \label{dsmd-three-point-1}
\end{align}
\begin{align}
  f (\hat{x}^k) + \omega (\hat{x}^k) + \frac{\rho}{2} \| \hat{x}^k - x^k \|^2
  & \leq f (x^{k + 1}) + \omega (x^{k + 1}) + \frac{\rho}{2} \| x^{k + 1} -
  x^k \|^2  \label{dsmd-three-point-2}
\end{align}
Summing up \eqref{dsmd-three-point-1} and \eqref{dsmd-three-point-2} and taking expectation, we have
\begin{align}
  & \frac{\gamma_k - \rho}{2} \mathbb{E}_k [\| x^{k + 1} - x^k \|^2] -
  \frac{\gamma_k - \rho}{2}  \| \hat{x}^k - x^k \|^2 + \frac{\gamma_k -
  \kappa}{2} \mathbb{E}_k [\| \hat{x}^k - x^{k + 1} \|^2]   \nonumber\\
  \leq{} & \Ebb_{\xi^{k-\tau_k}}[\langle g^{k - \tau_k},
  \hat{x}^k - x^{k - \tau_k} \rangle]  - f (\hat{x}^k) +\mathbb{E}_k [f (x^{k + 1})] 
  -\mathbb{E}_k [\langle g^{k - \tau_k}, x^{k + 1} - x^{k - \tau_k} \rangle] 
  \nonumber \\
  ={} & \Ebb_{\xi^{k-\tau_k}}[\langle g^{k - \tau_k},
  \hat{x}^k - x^{k - \tau_k} \rangle] + f(x^{k - \tau_k}) - f (\hat{x}^k) \nonumber \\
  & + \mathbb{E}_k [f (x^{k + 1})] - f(x^{k - \tau_k})
  -\mathbb{E}_k [\langle g^{k - \tau_k}, x^{k + 1} - x^{k - \tau_k} \rangle] 
  \nonumber\\
  \leq{} & \frac{\lambda}{2} \| \hat{x}^k - x^{k - \tau_k} \|^2 + 2 L_f \Ebb_k [\| x^{k
  + 1} - x^{k - \tau_k} \|]  \label{dsmd-sum-ineq}
\end{align}
where \eqref{dsmd-sum-ineq} follows from $\lambda$-weak convexity of $f (x, \xi)$ and \ref{B1} (using \citeapp{davis2019stochasticdup}). Re-arranging the terms, we have
\begin{align}
  & \frac{\gamma_k - \kappa}{2} \mathbb{E}_k [\| \hat{x}^k - x^{k + 1} \|^2] \nonumber \\
  \nonumber 
  \leq{} &\frac{\gamma_k - \rho}{2} \| \hat{x}^k - x^k \|^2 + \frac{\lambda}{2}
  \| \hat{x}^k - x^{k - \tau_k} \|^2 \\ 
  & + 2 L_f \mathbb{E}_k [\|x^{k + 1} - x^{k -
  \tau_k} \|] - \frac{\gamma_k - \rho}{2} \mathbb{E}_k [\| x^{k + 1} - x^k
  \|^2] \nonumber\\
  \leq{} & \frac{\gamma_k - \rho}{2} \| \hat{x}^k - x^k \|^2 + \lambda
  \mathbb{E}_k [\|x^{k + 1} - x^{k - \tau_k} \|^2] + \lambda \mathbb{E}_k [\|
  \hat{x}^k - x^{k + 1} \|^2] \nonumber \\
  &  + 2 L_f \mathbb{E}_k [\|x^{k + 1} - x^{k -
  \tau_k} \|- \frac{\gamma_k - \rho}{2} \mathbb{E}_k [\| x^{k + 1} - x^k
  \|^2] \label{dsmd-ineq-cauchy}
\end{align}
where \eqref{dsmd-ineq-cauchy} follows by Cauchy's inequality $\|a + b\|^2 \leq 2\|a\|^2
+ 2 \|b\|^2$. Now we re-arrange the terms and divide both sides by $\gamma_k - 2
\lambda - \kappa$ to derive
\begin{align}
  & \mathbb{E}_k [\| \hat{x}^k - x^{k + 1} \|^2] \nonumber\\
  \leq{} & \frac{\gamma_k - \rho}{\gamma_k - 2 \lambda - \kappa}  \| \hat{x}^k -
  x^k \|^2 + \frac{2 \lambda}{\gamma_k - 2 \lambda - \kappa} \mathbb{E}_k
  [\|x^{k + 1} - x^{k - \tau_k} \|^2] \nonumber \\
  & + \frac{4 L_f}{\gamma_k - 2 \lambda -
  \kappa} \mathbb{E}_k [\|x^{k + 1} - x^{k - \tau_k} \|]- \frac{\gamma_k - \rho}{\gamma_k - 2 \lambda - \kappa} \mathbb{E}_k [\| x^{k + 1} - x^k
  \|^2] \nonumber\\
  ={} & \| \hat{x}^k - x^k \|^2 - \frac{\rho - 2 \lambda - \kappa}{\gamma_k - 2
  \lambda - \kappa}  \| \hat{x}^k - x^k \|^2 + \frac{2 \lambda}{\gamma_k - 2
  \lambda - \kappa} \mathbb{E}_k [\|x^{k + 1} - x^{k - \tau_k} \|^2] \nonumber \\
  & + \frac{4
  L_f}{\gamma_k - 2 \lambda - \kappa} \mathbb{E}_k [\|x^{k + 1} - x^{k -
  \tau_k} \|] -\frac{\gamma_k - \rho}{\gamma_k - 2 \lambda - \kappa} \mathbb{E}_k [\| x^{k + 1} - x^k
  \|^2]. \label{dsmd-distreduce}
\end{align}
Finally, we measure the reduction in potential function $\psi_{1
/ \rho} (x^k)$ and successively deduce that
\begin{align}
  & \mathbb{E}_k [\psi_{1 / \rho} (x^{k + 1})] \nonumber\\
  ={} & \mathbb{E}_k [f (\hat{x}^{k + 1}) + \omega (\hat{x}^{k + 1}) +
  \frac{\rho}{2} \| \hat{x}^{k + 1} - x^{k + 1} \|^2] \nonumber\\
  \leq{} & \mathbb{E}_k [f (\hat{x}^k) + \omega (\hat{x}^k) + \frac{\rho}{2} \|
  \hat{x}^k - x^{k + 1} \|^2] \nonumber\\
  \leq{} & \mathbb{E}_k [f (\hat{x}^k) + \omega (\hat{x}^k) + \frac{\rho}{2} \|
  \hat{x}^k - x^k \|^2] - \frac{\rho (\rho - 2 \lambda - \kappa)}{2 (\gamma_k - 2
  \lambda - \kappa)}  \| \hat{x}^k - x^k \|^2 \label{dsmd-potreduce} \nonumber \\
  & + \frac{\rho \lambda}{\gamma_k - 2 \lambda - \kappa} \mathbb{E}_k
  [\|x^{k + 1} - x^{k - \tau_k} \|^2] + \frac{2 \rho L_f}{\gamma_k - 2 \lambda
  - \kappa} \mathbb{E}_k [\|x^{k + 1} - x^{k - \tau_k} \|] \\
  & -\frac{\rho(\gamma_k - \rho)}{2(\gamma_k - 2 \lambda - \kappa)} \mathbb{E}_k [\| x^{k + 1} - x^k
  \|^2] \nonumber\\
  ={} & \psi_{1 / \rho} (x^k) - \frac{\rho (\rho - 2 \lambda - \kappa)}{2
  (\gamma_k - 2 \lambda - \kappa)}  \| \hat{x}^k - x^k \|^2 + \frac{\rho \lambda}{\gamma_k - 2 \lambda - \kappa} \mathbb{E}_k [\|x^{k
  + 1} - x^{k - \tau_k} \|^2]\nonumber\\
  &  + \frac{2 \rho L_f}{\gamma_k - 2 \lambda -
  \kappa} \mathbb{E}_k [\|x^{k + 1} - x^{k - \tau_k} \|] -\frac{\rho(\gamma_k - \rho)}{2(\gamma_k - 2 \lambda - \kappa)} \mathbb{E}_k [\| x^{k + 1} - x^k
  \|^2] . \nonumber
\end{align}
where relation \eqref{dsmd-potreduce} plugs \eqref{dsmd-distreduce} in. A simple re-arrangement completes the proof.

\subsection{Proof of Theorem \ref{thm:dsmd}}
	By the descent property revealed in \textbf{Lemma \ref{lem:dsmd-descent}}, we multiply both sides of the inequality  by $\gamma_k - 2\lambda - \kappa$, telescope over $k = 1, \ldots, K$ and deduce that
\begin{align}
  & \frac{\rho (\rho - 2 \lambda - \kappa)}{2} \mathbb{E} [\| \hat{x}^{k^{\ast}} -
  x^{k^{\ast}} \|^2] \nonumber\\
  ={} & \frac{\rho (\rho - 2 \lambda - \kappa)}{2 K}  \sum_{k = 1}^K \mathbb{E}
  [\| \hat{x}^k - x^k \|^2] \nonumber\\
  \leq{} & \frac{\gamma - 2 \lambda - \kappa}{K}  \{\psi_{1 / \rho} (x^1)
  -\mathbb{E} [\psi_{1 / \rho} (x^{K + 1})]\} - \frac{\rho (\gamma -
  \rho)}{2 K}  \sum_{k = 1}^K \mathbb{E} [\|x^{k + 1} - x^k \|^2] \nonumber\\
  & + \frac{\rho}{K}  \sum_{k = 1}^K \mathbb{E}  [\lambda \|x^{k + 1} -
  x^{k - \tau_k} \|^2 + 2 L_f \|x^{k + 1} - x^{k - \tau_k} \|] \nonumber\\
  \leq{} & \frac{(\gamma - 2 \lambda - \kappa) D}{K} - \frac{\rho (\gamma -
  \rho)}{2 K}  \sum_{k = 1}^K \mathbb{E} [\|x^{k + 1} - x^k \|^2] \nonumber\\
  & + \frac{\rho}{K}  \sum_{k = 1}^K \mathbb{E}  [- \frac{\gamma - \rho}{2}
  \|x^{k + 1} - x^k \|^2 + \lambda \|x^{k + 1} - x^{k - \tau_k} \|^2 + 2 L_f
  \|x^{k + 1} - x^{k - \tau_k} \|],  \label{eqn:thm1-1}
\end{align}
where \eqref{eqn:thm1-1} is due to $\psi (x^{\ast}) \leq \mathbb{E}_k
[\psi_{1 / \rho} (x^{K + 1})]$.  Now it remains to bound the error from the
stochastic delays. 
Let's first consider applying $\|a + b\|^2 \leq 2\|a\|^2 + 2\|b\|^2$ to get
\begin{equation}
	\sum_{k = 1}^K \mathbb{E} [\|x^{k + 1} - x^{k - \tau_k} \|^2] \leq 2
   \sum_{k = 1}^K \mathbb{E} [\|x^k - x^{k - \tau_k} \|^2] + 2 \sum_{k =
   1}^K \mathbb{E} [\|x^{k + 1} - x^{k } \|^2] \label{eqn:thm1-4}
\end{equation}
and we can bound the first term using
\begin{align}
  \sum_{k = 1}^K \mathbb{E} [\| x^k - x^{k - \tau_k} \|^2] ={} & \sum_{k = 1}^K
  \mathbb{E} \Big[ \Big\| \sum_{l = 1}^{\tau_k} x^{k + 1 - l} - x^{k - l}
  \Big\|^2 \Big] \nonumber \\
  \leq{} & \sum_{k = 1}^K \tau_k \sum_{l = 1}^{\tau_k} \mathbb{E} [\| x^{k + 1 -
  l} - x^{k - l} \|^2]  \label{eqn:thm1-2-1}  \\
  \leq{} & \frac{4(L_f + L_{\omega})^2}{\gamma^2} \sum_{k =
  1}^K \tau_k^2, \label{eqn:thm1-2} 
\end{align}

where \eqref{eqn:thm1-2-1} uses the fact $\|\sum_{i=1}^ka_i\|^2 \leq k \sum_{i=1}^k \|a_i\|^2$, \eqref{eqn:thm1-2} invokes \textbf{Lemma \ref{lem:bound-iter-diff}} since $\langle g, x \rangle + \omega$ is $(\|g\| + L_\omega)$ Lipschitz continuous with $\Ebb_\xi[\|g\|^2] \leq L_f^2$, and we let $x^{k - j} = x^1$ if $k \geq j$. 
On the other hand, we bound the first-order terms by
\begin{align}
  \sum_{k = 1}^K \mathbb{E} [\|x^{k + 1} - x^{k - \tau_k} \|] \leq{} & \sum_{k = 1}^K  \sum_{l = 0}^{\tau_k} \mathbb{E} [\|x^{k - l + 1} -
  x^{k - l} \|] \nonumber\\
  \leq{} & \sum_{k = 1}^K 2\gamma^{- 1} (L_f + L_{\omega}) (\tau_k + 1) \label{eqn:thm1-3} \\
  ={} & 2\gamma^{- 1} (L_f + L_{\omega}) \big( K + \sum_{k = 1}^K \tau_k \big),
  \nonumber
\end{align}
where \eqref{eqn:thm1-3} again invokes \textbf{Lemma \ref{lem:bound-iter-diff}}. Plugging the bound back, we deduce that
\begin{align}
  &\frac{\rho (\rho - 2 \lambda - \kappa)}{2} \mathbb{E} [\|
  \hat{x}^{k^{\ast}} - x^{k^{\ast}} \|^2] \nonumber\\
 \leq{} & \frac{(\gamma - 2 \lambda - \kappa) D}{K} + \frac{\rho}{K}  \sum_{k =
  1}^K \mathbb{E}  [- \frac{\gamma - \rho}{2} \|x^{k + 1} - x^k \|^2 +
  \lambda \|x^{k + 1} - x^{k - \tau_k} \|^2 + 2 L_f \|x^{k + 1} - x^{k -
  \tau_k} \|] \nonumber\\
 \leq{} & \frac{(\gamma - 2 \lambda - \kappa) D}{K}  + \frac{4 \rho L_f (L_f + L_{\omega})}{\gamma K}
  \sum_{k = 1}^K (\tau_k + 1)+ \frac{8\rho \lambda (L_f + L_{\omega})^2}{\gamma^2 K}
  \sum_{k = 1}^K \tau_k^2  \label{dsmd-descent-lemma}\\
 \leq{} & \frac{(\rho + 2\lambda) D}{K} + \frac{D}{\sqrt{K} \alpha}  + \frac{4 \rho L_f (L_f + L_{\omega})
  \alpha}{\sqrt{K}} \Big( \frac{1}{K} \sum_{k = 1}^K \tau_k + 1\Big) +
  \frac{8\rho \lambda (L_f + L_{\omega})^2 \alpha^2}{K} \Big( \frac{1}{K}
  \sum_{k = 1}^K \tau_k^2 \Big) \nonumber
\end{align}
where \eqref{dsmd-descent-lemma} is by $\gamma = \sqrt{K} / \alpha + \rho + 4 \lambda + \kappa $ and we cancel the first summation term in \eqref{eqn:thm1-4} with $-\sum_{k=1}^K \frac{\gamma - \rho}{2}\|x^{k + 1} - x^k\|^2$ since $\gamma \geq \rho + 4 \lambda$.  The proof
is complete after dividing both sides by $\rho (\rho - 2 \lambda - \kappa)$ and using the fact that $\frac{1}{2}\| \hat{x}^k - x^k\|^2 = \frac{1}{2\rho^2} \| \nabla \psi_{1 / \rho} (x) \|^2$.

\section{Convergence analysis of {\dspl}} \label{app:dspl}

 In this section, we introduce a more general \dspl{} associated with Bregman divergence and establish its convergence analysis. After proving the result in our Bregman context, the Euclidean results in Section \ref{sec:dspl} follows immediately as a special case.

 \paragraph{Why use Bregman divergence?}
 Recall that in \ref{C1} we assume Lipschitz continuity of both $c$ and $\nabla c$, which does not hold without assuming a bounded set. The main issue here is to expect a Lipschitz smooth function to also be Lipschitz continuous. Using the Bregman context, we can relax such Lipschitz continuity to relative Lipschitzian property, so that we do not necessarily need the bounded set assumption.

\subsection{Preliminaries and analysis of Bregman \dspl}\label{app:rate-dpspl}
In this section, we present the convergence results of {\dspl}.
As we mentioned in \textbf{Remark \ref{rem:5}}, we replace $\frac{1}{2}\|\cdot\|^2$ regularization by divergence $V_d$ and summarize the update in \textbf{Algorithm \ref{alg:dspl-bregman}}.

\begin{algorithm} \label{alg:dspl-bregman}
\KwIn{$x^{1}$\;}
\For{k =\rm{ 1, 2,...}}{
Let $c(x^{k-\tau_{k}},\xi^{k - \tau_k})$ and $\nabla c(x^{k-\tau_{k}},\xi^{k - \tau_k})$
be computed by a worker with delay $\tau_{k}$\;

In the master node update
\begin{equation} \label{eq:update-dspl-bregman}
x^{k+1}=\argmin_x \left\{ h \brbra{c (x^{k-\tau_{k}}, \xi^{k-\tau_{k}}) + \inner{\nabla c
  (x^{k-\tau_{k}}, \xi^{k-\tau_{k}})}{x - x^k}} + \omega (x) +  \gamma_k V_d(x, x_k) 
  \right\}\end{equation}
}
\caption{A delayed stochastic prox-linear method with Bregman proximal updates\label{alg:dspl-bregman}}
\end{algorithm}

Now we overload the assumptions used in Section \ref{sec:dspl}.

\begin{enumerate}[leftmargin=*,label=\textbf{A\arabic*$'$:},ref=\rm{\textbf{A\arabic*$'$}}]
\item (i.i.d. sample) It is possible to draw i.i.d. samples $\{
\xi^k \}$ from $\Xi$. \label{Ap1}
\item (Relative Lipschitzian property) $\omega(x)$ satisfies relatively Lipschitzian property \eqref{eqn:rel-lip} \label{Ap2}
\[\omega (x) - \omega (y) \leq L_{\omega} \sqrt{2 V_d (y, x)} \]
\item (Weak convexity) $\omega(x)$ is $\kappa$-weakly convex. \label{Ap3}
\item (Strongly-convex kernel) Kernel function $d$ is 1-strongly convex. \label{Ap4}
\item (Bounded moment) The distribution of the independent stochastic delays $\{ \tau_k
  \}$ has bounded first and second moments. i.e., $\mathbb{E} [\tau_k] \leq \bar{\tau} < \infty, \mathbb{E} [\tau_k^2] \leq {\tau_{\sigma^2}} <
  \infty$, for all $k$. \label{Ap5}
\end{enumerate}
\begin{enumerate}[start=1,label=\textbf{C\arabic*$'$:},ref=\rm{\textbf{C\arabic*$'$}},leftmargin=*]
\item $h(x)$ is convex and $L_h$-Lipschitz continuous; $c(x, \xi)$ has $C(\xi)$-Lipschitz continuous gradient, for all $ \xi \sim \Xi$. Moreover, during the algorithm the stochastic function $f_z(x, \xi), z \in \dom d$ is $L_f(\xi)$-relative Lipschitzian to $d$ for all $\xi\sim \Xi$. Moreover, we assume $\Ebb_\xi[C(\xi)^2] \leq C^2, \Ebb_\xi[L_c(\xi)^2] \leq L_c^2$.\label{Cp1}
\end{enumerate}

\begin{rem}
	It can be seen that we added extra assumptions on the kernel and relaxed the Lipschitz continuity of $c$ in \ref{Cp1} and $\omega$ in \ref{Ap2}. Specially if $c$ is $L_c(\xi)$-Lipschitz continuous,  we have $f_z(x, \xi)$ is $L_h L_c(\xi)$-Lipschitz continuous since
	\begin{align*}
		 & f_z(x, \xi) - f_z(y, \xi) \\ 
		={} &  h(c(z, \xi) + \langle \nabla c(z, \xi), x - z \rangle) - h(c(z, \xi) + \langle \nabla c(z, \xi), y - z \rangle) \\
		\leq{} & L_h |\langle\nabla c(z, \xi),  x-y \rangle|\\
		\leq{} & L_hL_c(\xi) \|x - y\|
	\end{align*}
\end{rem}
With the above assumptions, we can further derive a more general version of \textbf{Proposition \ref{prop:property-dspl}}.
\begin{prop}[\textbf{Proposition \ref{prop:property-dspl}} in the Bregman setting]\ifthenelse{\boolean{doublecolumn}}{}{}\ \label{prop:property-dspl-bregman}
Let $\{f_z(\cdot, \xi)\}$ be the sequence of stochastic functions queried during the {\dspl} algorithm, then
\begin{enumerate}[leftmargin=0pt, itemsep=0pt,leftmargin=*, label=\textbf{P\arabic*$'$:},ref={\textbf{P\arabic*$'$}}]
   \item (Convexity) $f_z (x, \xi)$ is convex, for all $x \in \dom d$ and $\xi
  \sim \Xi$.\label{Pp1}
  \item (Two-sided approximation) $|f_z (x, \xi) - f (x, \xi) | \leq \frac{L_h
  C(\xi)}{2}  \|x - z\|^2$, for all $ x \in \dom d$ and all $ \xi \sim \Xi$.\label{Pp2}
  \item (Relative Lipschitzian property)\label{Pp3}
  $f_z (x, \xi) - f_z (y, \xi) \leq L_f(\xi)  \sqrt{2 V_d (y, x)} \text{~~for all } x,
     y \in \dom d$ and all $\xi \sim \Xi$.
  \end{enumerate}
\end{prop}

As we did in Section \ref{sec:dspl}, we take $\lambda = L_h C$ and use these constants to present the results. Our analysis adopts the conventional potential reduction 
framework but a more careful treatment of stochastic noise is needed to improve the convergence result.
The next lemma bounds the stochastic noise of the algorithm and is key to our analysis of {\dspl}.

\begin{lem}[Stability of the stochastic iteration] \label{lem:stability}
	Assume that the assumptions \ref{Ap1} to \ref{Ap4} hold. If $f_x(\cdot, \xi)$ is convex and satisfies \textbf{Proposition \ref{prop:property-dspl-bregman}} , then the proximal iteration 
	\[x^{k + 1} = \argmin_x  \{ f_{x^{k-\tau_k}} (x, \xi^{k - \tau_k}) + \omega(x) + \gamma_k V_d(x, z^k) \}\]
	satisfies
   \label{lem2}
  \[ \left| \mathbb{E}_k\left\{  \mathbb{E}_{\xi} \left[ f_{x^{k - \tau_k}}
     (x^{k + 1}, \xi) \right] - f_{x^{k - \tau_k}} (x^{k + 1},
     \xi^{k - \tau_k})  \right\}\right|  \leq \frac{2L_f^2}{\gamma - \kappa},  \]
  {\tmem{for any $\gamma_k > \kappa$}}.
\end{lem}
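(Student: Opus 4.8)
The plan is to read the left-hand side as a one-sample algorithmic-stability (generalization) gap and to control it through the interplay between strong convexity of the proximal subproblem and the relative Lipschitzness of the model. Write $\zeta = \xi^{k-\tau_k}$, $w = x^{k-\tau_k}$ and $z = z^k$, so that $x^{k+1} = \argmin_x\{f_w(x,\zeta)+\omega(x)+\gamma_k V_d(x,z)\}$ and the target is $\Delta_k := \mathbb{E}_k\{\mathbb{E}_\xi[f_w(x^{k+1},\xi)]-f_w(x^{k+1},\zeta)\}$; conditioning on $\sigma_k$ fixes $w$ and $z$, and by \ref{Ap1} the model-sample $\zeta$ is then a fresh i.i.d.\ draw. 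First I would introduce an independent copy $\tilde\zeta$ of $\zeta$ together with its ghost iterate $\tilde x^{k+1} = \argmin_x\{f_w(x,\tilde\zeta)+\omega(x)+\gamma_k V_d(x,z)\}$. Since $\mathbb{E}_\xi[f_w(x^{k+1},\xi)] = \mathbb{E}_{\tilde\zeta}[f_w(x^{k+1},\tilde\zeta)]$ and $\tilde\zeta$ is independent of $x^{k+1}$, we get $\Delta_k=\mathbb{E}_k[f_w(x^{k+1},\tilde\zeta)-f_w(x^{k+1},\zeta)]$; exchangeability of $(\zeta,\tilde\zeta)$ then lets me relabel and write $\Delta_k$ symmetrically, giving $2\Delta_k = \mathbb{E}_k[S]$ with $S := [f_w(\tilde x^{k+1},\zeta)-f_w(x^{k+1},\zeta)] + [f_w(x^{k+1},\tilde\zeta)-f_w(\tilde x^{k+1},\tilde\zeta)]$.

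The heart of the argument is a two-sided estimate on $S$. For the lower bound, note that $F(x):=f_w(x,\zeta)+\omega(x)+\gamma_k V_d(x,z)$ is $(\gamma_k-\kappa)$-strongly convex relative to $d$: $f_w(\cdot,\zeta)$ is convex by \ref{Pp1}, $\omega$ is $\kappa$-weakly convex by \ref{Ap3}, and $d$ is $1$-strongly convex by \ref{Ap4}, so $F-(\gamma_k-\kappa)d$ is convex whenever $\gamma_k>\kappa$. Evaluating the strong-convexity inequality of $F$ at $\tilde x^{k+1}$, and that of the $\tilde\zeta$-subproblem at $x^{k+1}$, then adding, cancels the common $\omega+\gamma_k V_d$ terms and leaves $S\ge(\gamma_k-\kappa)(a+b)\ge 0$, where $a:=V_d(x^{k+1},\tilde x^{k+1})$ and $b:=V_d(\tilde x^{k+1},x^{k+1})$; in particular $\Delta_k\ge 0$. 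For the upper bound, \ref{Pp3} gives $S\le L_f\sqrt{2a}+L_f\sqrt{2b}\le 2L_f\sqrt{a+b}$. Combining the two estimates yields $\sqrt{a+b}\le 2L_f/(\gamma_k-\kappa)$ --- a stability bound exactly paralleling \textbf{Lemma \ref{lem:bound-iter-diff}} under $\gamma\mapsto\gamma_k-\kappa$, $L\mapsto L_f$ --- whence $S\le 4L_f^2/(\gamma_k-\kappa)$ and $\Delta_k = \tfrac12\mathbb{E}_k[S]\le 2L_f^2/(\gamma_k-\kappa)$, which with $\Delta_k\ge 0$ gives the absolute-value bound.

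I expect the main obstacle to be the symmetrization bookkeeping rather than any individual inequality. One must argue carefully that, conditionally on $\sigma_k$, the sample driving the model is genuinely a fresh i.i.d.\ draw, so that the ghost sample $\tilde\zeta$ and ghost iterate $\tilde x^{k+1}$ are legitimate, $\tilde\zeta$ is independent of $x^{k+1}$, and the identity $2\Delta_k=\mathbb{E}_k[S]$ is exact. A secondary subtlety is that, because $V_d$ is asymmetric, the sharp constant $2L_f^2/(\gamma_k-\kappa)$ is obtained only by symmetrizing the final Lipschitz step across both iterates (i.e.\ bounding the averaged quantity $S$) rather than applying relative Lipschitzness one-sidedly, which would lose a factor of $\sqrt2$. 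Specializing to the Euclidean kernel, where $a=b=\tfrac12\|x^{k+1}-\tilde x^{k+1}\|^2$, recovers the stated constant transparently.
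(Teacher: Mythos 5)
Your proof is correct, and its engine is the same as the paper's: a ghost iterate driven by an independent copy of the model sample, the two strong-convexity (three-point) inequalities added at the crossed points so that the $\omega$ and $\gamma_k V_d$ terms cancel, and relative Lipschitzness \ref{Pp3} to close the loop, yielding $\sqrt{V_d(x^{k+1},\tilde{x}^{k+1})+V_d(\tilde{x}^{k+1},x^{k+1})}\le 2L_f/(\gamma_k-\kappa)$, which is exactly the paper's bound \eqref{stability:max-bound} for its resampled map $\mathcal{A}(z,x,\cdot)$. Where you genuinely depart from the paper is the final probabilistic accounting, and your version is tighter on two counts. First, the paper rewrites the conditional gap as a double integral and bounds the pointwise difference $|f_z(\mathcal{A}(z,x,\xi'),\xi)-f_z(\mathcal{A}(z,x,\xi),\xi)|$ one-sidedly by $L_f\max\bigl\{\sqrt{V_d},\sqrt{V_d}\bigr\}$; applied literally, \ref{Pp3} puts a factor $\sqrt{2}$ in front of that maximum, so the paper's route as written delivers $2\sqrt{2}L_f^2/(\gamma_k-\kappa)$ unless that factor is silently dropped. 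Your symmetrization --- exchangeability of $(\zeta,\tilde{\zeta})$ giving $2\Delta_k=\mathbb{E}_k[S]$, with the paired sum bounded by $S\le 2L_f\sqrt{a+b}$ --- recovers the stated constant $2L_f^2/(\gamma_k-\kappa)$ exactly, and it yields $\Delta_k\ge 0$ as a by-product, so the absolute value comes for free. Second, you justify the modulus $\gamma_k-\kappa$ correctly: the subproblem objective is only $(\gamma_k-\kappa)$-strongly convex relative to $d$ once the $\kappa$-weak convexity of $\omega$ (\ref{Ap3}) is absorbed into the $1$-strongly convex kernel (\ref{Ap4}), whereas the paper invokes its three-point lemma (\textbf{Lemma \ref{lem:three-point}}, stated for convex objectives) with modulus $\gamma_k$ on a nonconvex objective and only afterwards relaxes $2L_f/\gamma_k\le 2L_f/(\gamma_k-\kappa)$. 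Both discrepancies are immaterial at the level of convergence rates, but your write-up is the one that the stated assumptions actually license, with the sharp constant.
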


Now we can get the lemmas that match our main results in a Bregman context.

\begin{lem}[\textbf{Lemma \ref{lem:dspl-descent}} in the Bregman setting] \label{lem:dspl-descent-bregman}
Suppose \ref{Ap1},\ref{Ap2},\ref{Ap3},\ref{Ap4} and \ref{Cp1} hold, if $\rho > 2 \lambda + \kappa, \gamma_k \geq \rho$,
  then
  \begin{align*}
    \frac{\rho (\rho - 2 \lambda - \kappa)}{\gamma_k - 2 \lambda - \kappa} V_d (\hat{x}^k, x^k)
    & \leq{} \psi^{d}_{1 / \rho} (x^k) -\mathbb{E}_k[\psi^{d}_{1 / \rho} (x^{k
    + 1})] - \frac{\rho(\gamma_k - \rho)}{2(\gamma_k - 2 \lambda - \kappa)} \mathbb{E}_k[\| x^{k + 1} - x^k \|^2]\\
  & \quad + \frac{3 \rho \lambda}{2 (\gamma_k - 2 \lambda - \kappa)} \mathbb{E}_k[\|
  x^{k + 1} - x^{k - \tau_k} \|^2] + \frac{2 \rho L_f^2}{(\gamma_k - \kappa)(\gamma_k -
  2 \lambda - \kappa)}
  \end{align*}
\end{lem}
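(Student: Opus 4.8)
The plan is to run the standard Moreau/Bregman-envelope potential-reduction argument anchored at $\hat{x}^k \assign \prox_{\psi/\rho}^d(x^k)$, routing every stochastic and delay error through the auxiliary lemmas already established. Two envelope facts drive the proof: the defining identity $\psi^d_{1/\rho}(x^k) = \psi(\hat{x}^k) + \rho V_d(\hat{x}^k, x^k)$, and the suboptimality bound $\psi^d_{1/\rho}(x^{k+1}) \leq \psi(\hat{x}^k) + \rho V_d(\hat{x}^k, x^{k+1})$ obtained by plugging the feasible but suboptimal point $\hat{x}^k$ into the minimization defining $\psi^d_{1/\rho}(x^{k+1})$. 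Subtracting the two reduces the per-step change of the potential to controlling $\rho V_d(\hat{x}^k, x^{k+1})$, so the whole lemma hinges on bounding $V_d(\hat{x}^k, x^{k+1})$ from above by $V_d(\hat{x}^k, x^k)$ plus manageable error.

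To bound $V_d(\hat{x}^k, x^{k+1})$ I would apply the three-point lemma (Lemma \ref{lem:three-point}) to the update \eqref{eq:update-dspl-bregman} with comparison point $\hat{x}^k$. Since the model $f_{x^{k-\tau_k}}(\cdot, \xi)$ is convex by \ref{Pp1} while $\omega$ is only $\kappa$-weakly convex, I would absorb the weak-convexity defect into the regularizer using that the kernel $d$ is $1$-strongly convex (\ref{Ap4}), converting $\gamma_k V_d$ into an effective $(\gamma_k - \kappa)$-strongly-convex term; this yields
\[
f_{x^{k-\tau_k}}(x^{k+1}, \xi^{k-\tau_k}) + \omega(x^{k+1}) + (\gamma_k - \kappa) V_d(\hat{x}^k, x^{k+1}) \leq f_{x^{k-\tau_k}}(\hat{x}^k, \xi^{k-\tau_k}) + \omega(\hat{x}^k) + \gamma_k V_d(\hat{x}^k, x^k) - \gamma_k V_d(x^{k+1}, x^k).
\]
I would then replace the two model values by the true objective $\psi$ via the two-sided approximation \ref{Pp2}, paying $\tfrac{\lambda}{2}\|\hat{x}^k - x^{k-\tau_k}\|^2$ on the $\hat{x}^k$ side and $\tfrac{\lambda}{2}\|x^{k+1} - x^{k-\tau_k}\|^2$ on the $x^{k+1}$ side.

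The core probabilistic step is to take $\mathbb{E}_k$ and replace the single-sample quantity $f_{x^{k-\tau_k}}(x^{k+1}, \xi^{k-\tau_k})$ by its in-expectation value; because $x^{k+1}$ is itself computed from $\xi^{k-\tau_k}$ this introduces a bias that is exactly what the stability estimate (Lemma \ref{lem:stability}) controls, contributing the noise term of the form $\tfrac{2\rho L_f^2}{\gamma_k(\gamma_k - 2\lambda - \kappa)}$ (the $\hat{x}^k$-side term is bias-free by the i.i.d. assumption \ref{Ap1}). To tidy the geometry I would use $\|\hat{x}^k - x^{k-\tau_k}\|^2 \leq 2\|\hat{x}^k - x^{k+1}\|^2 + 2\|x^{k+1} - x^{k-\tau_k}\|^2$, turning the $\hat{x}^k$-side delay error into an additional $\|x^{k+1} - x^{k-\tau_k}\|^2$ contribution (raising the total coefficient to $\tfrac{3\lambda}{2}$) together with a $\|\hat{x}^k - x^{k+1}\|^2 \leq 2 V_d(\hat{x}^k, x^{k+1})$ piece; moving the latter to the left is precisely what shifts the effective modulus from $\gamma_k - \kappa$ to $\gamma_k - 2\lambda - \kappa$, matching every denominator in the statement.

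The step I expect to be the main obstacle is the final consolidation: extracting the clean stationarity gain $\tfrac{\rho(\rho - 2\lambda - \kappa)}{\gamma_k - 2\lambda - \kappa} V_d(\hat{x}^k, x^k)$ with exactly that denominator. Feeding the crude bound on $V_d(\hat{x}^k, x^{k+1})$ back through the envelope difference naively tends to produce a denominator $\gamma_k - 2\lambda - \kappa + \rho$, so to recover the stated constant I would need to retain the negative proximal term $-\gamma_k V_d(x^{k+1}, x^k)$ and exploit the $(\rho-\lambda-\kappa)$-strong convexity of $y \mapsto \psi(y) + \rho V_d(y, x^k)$ around its minimizer $\hat{x}^k$, rather than discarding these terms. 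Orchestrating the two-sided-approximation errors, the weak-convexity corrections, and this strong-convexity gain through a single chain so that everything collapses to the claimed inequality is the delicate bookkeeping; by contrast, the measurability issues created by the delayed sample $\xi^{k-\tau_k}$ are fully handled by Lemma \ref{lem:stability} and assumption \ref{Ap1}.
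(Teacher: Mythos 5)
Your proposal is correct and follows essentially the same route as the paper's proof: the three-point lemma at comparison point $\hat{x}^k$ with the $\kappa$-weak convexity of $\omega$ absorbed into the $1$-strongly convex kernel, the two-sided approximation \ref{Pp2} together with Lemma \ref{lem:stability} for the delayed-sample bias, Young's inequality to convert $\tfrac{\lambda}{2}\|\hat{x}^k-x^{k-\tau_k}\|^2$ into the $\tfrac{3\lambda}{2}$ delay coefficient plus a $2\lambda V_d(\hat{x}^k,x^{k+1})$ term moved to the left (producing the $\gamma_k-2\lambda-\kappa$ modulus), and finally pairing with the $\rho$-prox relation at $\hat{x}^k$ so that $\rho V_d(x^{k+1},x^k)$ is absorbed by the retained $-\gamma_k V_d(x^{k+1},x^k)$ before feeding the result through the envelope suboptimality bound. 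The only, immaterial, difference is in that last pairing: where you invoke the $(\rho-\lambda-\kappa)$-strong convexity of $y\mapsto\psi(y)+\rho V_d(y,x^k)$, the paper uses only plain minimality of $\hat{x}^k$ for that objective, the extra strong-convexity term being a nonnegative quantity one may simply discard.
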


\begin{thm}[\textbf{Theorem \ref{thm:dspl}} in the Bregman setting] \label{thm:dspl-bregman}
Under the same conditions as \textbf{Lemma \ref{lem:dspl-descent-bregman}}, as well as \ref{Ap5},
taking $\gamma_k \equiv \gamma = \rho + 6 \lambda + \kappa + \sqrt{K} / \alpha$ for some $\alpha > 0$, letting $k^*$ be uniformly chosen between 1 and $K$, then
\[ \mathbb{E} [V_d (\hat{x}^{k^{\ast}}, x^{k^{\ast}})] \leq \frac{1}{\rho
   (\rho - 2 \lambda - \kappa )} \Bigg[ \frac{(\rho + 4\lambda) D}{K} + \frac{D}{\sqrt{K} \alpha} + \frac{2
  \rho L_f^2\alpha}{\sqrt{K} } + \frac{6 \rho  \lambda (L_f + L_{\omega})^2
  \alpha^2}{K} \Delta_2\Bigg], \]
where $D = \psi^{d}_{1 / \rho} (x^1) - 
\inf_x \psi (x)$. 
\end{thm}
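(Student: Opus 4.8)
The plan is to start from the per-step descent inequality of \textbf{Lemma \ref{lem:dspl-descent-bregman}}, fix the constant choice $\gamma_k \equiv \gamma = 2\lambda + \kappa + \sqrt{K}/\alpha$ so that $\gamma - 2\lambda - \kappa = \sqrt{K}/\alpha$, take total expectations (via the tower property), sum over $k = 1, \dots, K$, and convert the resulting average into a bound at the uniformly random index $k^\ast$. Two structures must line up. The envelope differences $\psi^d_{1/\rho}(x^k) - \mathbb{E}_k[\psi^d_{1/\rho}(x^{k+1})]$ telescope to $\psi^d_{1/\rho}(x^1) - \mathbb{E}[\psi^d_{1/\rho}(x^{K+1})]$, and since $\psi^d_{1/\rho}(x) = \min_y\{\psi(y) + \rho V_d(y,x)\} \geq \psi(x^\ast)$ this is at most $D = \psi^d_{1/\rho}(x^1) - \psi(x^\ast)$. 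The remaining two error terms are constant in $k$ once the delay term is bounded uniformly, so summing merely multiplies them by $K$, while the left-hand side becomes $\tfrac{\rho(\rho-2\lambda-\kappa)}{\gamma-2\lambda-\kappa}\sum_k \mathbb{E}[V_d(\hat{x}^k,x^k)] = \tfrac{\rho(\rho-2\lambda-\kappa)K}{\gamma-2\lambda-\kappa}\,\mathbb{E}[V_d(\hat{x}^{k^\ast},x^{k^\ast})]$.

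The main obstacle is controlling the delay term $\mathbb{E}_k[\|x^{k+1} - x^{k-\tau_k}\|^2]$, where both the moment assumption and the prox-linear structure enter. First I would write $x^{k+1} - x^{k-\tau_k} = \sum_{j=k-\tau_k}^{k}(x^{j+1} - x^j)$, a sum of $\tau_k + 1$ consecutive proximal steps. Each step solves a subproblem whose objective $f_{x^{j-\tau_j}}(\cdot, \xi^{j-\tau_j}) + \omega(\cdot)$ is convex and, by \ref{Pp3} together with \ref{Ap2}, is $(L_f + L_\omega)$-relatively Lipschitz to $d$; hence \textbf{Lemma \ref{lem:bound-iter-diff}} yields $\|x^{j+1} - x^j\| \leq 2\gamma^{-1}(L_f + L_\omega)$. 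Applying the triangle inequality (equivalently Cauchy--Schwarz across the $\tau_k+1$ summands) gives $\|x^{k+1} - x^{k-\tau_k}\|^2 \leq 4\gamma^{-2}(L_f+L_\omega)^2(\tau_k+1)^2$, and taking expectation with \ref{Ap5} produces the crucial factor $\mathbb{E}[(\tau_k+1)^2] = \mathbb{E}[\tau_k^2] + 2\mathbb{E}[\tau_k] + 1 \leq \tau_{\sigma^2} + 2\bar\tau + 1$. This is precisely the step where the delay enters only through its first two moments rather than through a worst-case $\tau$, and it is the technical heart of the result.

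With the delay term bounded by $4\gamma^{-2}(L_f+L_\omega)^2(\tau_{\sigma^2}+2\bar\tau+1)$, I would substitute into the summed inequality and divide through by $\tfrac{\rho(\rho-2\lambda-\kappa)K}{\gamma-2\lambda-\kappa}$, factoring out $\tfrac{1}{\rho(\rho-2\lambda-\kappa)}$. Plugging $\gamma - 2\lambda - \kappa = \sqrt{K}/\alpha$ turns the telescoped term into $\tfrac{D}{\alpha\sqrt{K}}$, and the crude bound $\gamma^{-1} \leq \alpha/\sqrt{K}$ (valid since $\gamma \geq \sqrt{K}/\alpha$) converts the noise term $\tfrac{2\rho L_f^2}{\gamma}$ into $\tfrac{2\rho L_f^2 \alpha}{\sqrt{K}}$ and the delay term into a quantity of order $\tfrac{\rho\lambda(L_f+L_\omega)^2(\tau_{\sigma^2}+2\bar\tau+1)\alpha^2}{K}$, yielding the stated bound (the constant in the last term tracks the exact inequalities used in the telescoping step). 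The Euclidean \textbf{Theorem \ref{thm:dspl}} then follows by specializing $d(x) = \tfrac{1}{2}\|x\|^2$, for which $V_d(\hat{x}^k, x^k) = \tfrac{1}{2\rho^2}\|\nabla\psi_{1/\rho}(x^k)\|^2$, so that $\mathbb{E}[V_d]$ rescales into $\mathbb{E}[\|\nabla\psi_{1/\rho}\|^2]$ and the prefactor $\tfrac{1}{\rho(\rho-2\lambda-\kappa)}$ becomes $\tfrac{2\rho}{\rho-2\lambda-\kappa}$.
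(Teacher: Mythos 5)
Your proposal is correct and follows essentially the same route as the paper's own proof: telescoping the descent inequality of \textbf{Lemma \ref{lem:dspl-descent-bregman}} with constant $\gamma$, decomposing $x^{k+1}-x^{k-\tau_k}$ into consecutive proximal steps each bounded via \textbf{Lemma \ref{lem:bound-iter-diff}} with $L = L_f+L_\omega$, and then using the moment identity $\mathbb{E}[(\tau_k+1)^2] \le \tau_{\sigma^2}+2\bar\tau+1$ before substituting $\gamma - 2\lambda - \kappa = \sqrt{K}/\alpha$. The only deviation is bookkeeping: your careful per-step bound $\|x^{j+1}-x^j\|^2 \le 4\gamma^{-2}(L_f+L_\omega)^2$ combined with the Cauchy--Schwarz factor $(j+1)$ yields a delay constant of $6\rho\lambda$ rather than the paper's $3\rho\lambda$ (the paper's own chain appears to drop a factor of $2$ at this step), which you correctly flag as a constant-tracking issue rather than a structural one.
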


\textbf{\underline{Proof Proposition \ref{prop:property-dspl-bregman}}}\\

	$f_z (x, \xi)$ inherits convexity from $h$. The other properties hold by
\begin{align*}
  & | f (x, \xi) - f_y (x, \xi) |\\
   = {} &  | h (c (x, \xi)) - h (c (y, \xi) + \langle \nabla c (y, \xi), x - y
  \rangle) |\\
  \leq{} & L_h | c (x, \xi) - c (y, \xi) - \langle \nabla c (y, \xi), x - y
  \rangle |\\
  \leq{} &  \frac{L_h C}{2} \| x - y \|^2
\end{align*}
and \ref{Pp3} is by \ref{Cp1}.


\textbf{\underline{Proof of Lemma \ref{lem:stability}}}\\

Without loss of generality, we consider the following proximal mapping
\[ \mathcal{A} (z, x, \xi)
\assign \argmin_w  \{ f_z (w, \xi) + \omega(w) + \gamma V_d (w, x) \}, \]
where $\mathcal{A}$ denotes the proximal mapping from two past iterates $z,x$ and a given
sample $\xi \sim \Xi$. Then we invoke the three-point lemma to get, for any $ u \in
\dom d$ that
\begin{align}
  & f_z (\mathcal{A} (z, x, \xi), \xi) + \omega(\mathcal{A} (z, x, \xi)) + \gamma V_d (\mathcal{A} (z, x, \xi), x) \nonumber \\
   & \quad \leq f_z (u, \xi) + \omega(u) + \gamma V_d (u, x) - (\gamma - \kappa) V_d (u, \mathcal{A} (z, x, \xi)) \label{stability:three-point-1}
\end{align}
Similarly, given some $\xi' \sim \Xi, v\in
\dom d$, we have
\begin{align}
  & f_z (\mathcal{A} (z, x, \xi'), \xi') + \omega(\mathcal{A} (z, x, \xi')) + \gamma V_d (\mathcal{A} (z, x,
  \xi'), x) \nonumber \\
  & \quad \leq{} f_z (v, \xi') + \omega(v)+ \gamma V_d (v, x) - (\gamma - \kappa) V_d (v, \mathcal{A} (z, x,
  \xi')) \label{stability:three-point-2}.
\end{align}

Letting $u=\mathcal{A} (z, x, \xi')$ in \eqref{stability:three-point-1} and $v=\mathcal{A} (z, x, \xi)$ in \eqref{stability:three-point-2}, we sum the two relations up and get
\begin{align}
  & (\gamma - \kappa)[V_d (\mathcal{A} (z, x, \xi'), \mathcal{A} (z, x, \xi)) + V_d
  (\mathcal{A} (z, x, \xi), \mathcal{A} (z, x, \xi'))] \nonumber \\
  \leq{} & f_z (\mathcal{A} (z, x, \xi), \xi') - f_z (\mathcal{A} (z, x,
  \xi'), \xi') + f_z (\mathcal{A} (z, x, \xi'), \xi) - f_z (\mathcal{A} (z, x,
  \xi), \xi) \nonumber \\
  \leq{} & L_f [ \sqrt{2V_d (\mathcal{A} (z, x, \xi'), \mathcal{A} (z, x,
  \xi))} + \sqrt{2V_d (\mathcal{A} (z, x, \xi), \mathcal{A} (z, x, \xi'))}
  ] \label{stability:Lipschitz} \\
  \leq{} & 2 L_f [ \sqrt{V_d (\mathcal{A} (z, x, \xi'), \mathcal{A} (z, x,
  \xi)) + V_d (\mathcal{A} (z, x, \xi), \mathcal{A} (z, x, \xi'))} ] \label{stability:Jensen}
\end{align}

where \eqref{stability:Lipschitz} is by Lipschitzness from \ref{Ap2}, \ref{Pp3} and \eqref{stability:Jensen} uses the relation $\sqrt{a} + \sqrt{b} \leq \sqrt{2} \sqrt{a + b}$. Then this implies  
\begin{align}
&\max \{ \sqrt{V_d (\mathcal{A} (z, x, \xi'), \mathcal{A} (z, x,
   \xi))}, \sqrt{V_d (\mathcal{A} (z, x, \xi), \mathcal{A} (z, x, \xi'))}
   \} \nonumber \\ 
    & \leq{} \sqrt{V_d (\mathcal{A} (z, x, \xi'), \mathcal{A} (z, x,
   \xi)) + V_d (\mathcal{A} (z, x, \xi), \mathcal{A} (z, x, \xi')) }
  \leq \frac{2L_f}{\gamma - \kappa}. \label{stability:max-bound}
\end{align}
Last we successively deduce that
\begin{align}
   & | \mathbb{E}_{\xi'} [ \mathbb{E}_{\xi} [f_z (\mathcal{A} (z, x, \xi'),
  \xi) - f_z (\mathcal{A} (z, x, \xi'), \xi')] ] | \nonumber \\
   ={}& \textstyle \left |\int_{\xi' \sim \Xi} \int_{\xi \sim \Xi} f_z (\mathcal{A} (z, x,
  \xi'), \xi) - f_z (\mathcal{A} (z, x, \xi'), \xi') d \mu_{\xi} d
  \mu_{\xi'} \right| \nonumber \\
={}& \textstyle \left|\int_{\xi' \sim \Xi} \int_{\xi \sim \Xi} f_z (\mathcal{A} (z, x,
  \xi'), \xi) d \mu_{\xi} d \mu_{\xi'} - {\int_{\xi' \sim \Xi}} f_z
  (\mathcal{A} (z, x, \xi'), \xi') d \mu_{\xi'} \right|  \nonumber \\
   ={}& \textstyle\left| \int_{\xi' \sim \Xi} \int_{\xi \sim \Xi} f_z (\mathcal{A} (z, x,
  \xi'), \xi) d \mu_{\xi} d \mu_{\xi'} - {\int_{\xi \sim \Xi}} f_z
  (\mathcal{A} (z, x, \xi), \xi) d \mu_{\xi} \right|\nonumber  \\
   ={}& \textstyle\left| \int_{\xi' \sim \Xi} \int_{\xi \sim \Xi} f_z (\mathcal{A} (z, x,
  \xi'), \xi) - f_z (\mathcal{A} (z, x, \xi), \xi) d \mu_{\xi} d \mu_{\xi'} \right|\nonumber \\
   \leq{}&\textstyle \int_{\xi' \sim \Xi} \int_{\xi \sim \Xi} \left| f_z (\mathcal{A} (z, x,
  \xi'), \xi) - f_z (\mathcal{A} (z, x, \xi), \xi) \right| d \mu_{\xi} d \mu_{\xi'} \label{stability:Jensen-2}\\ %
   \leq{}& \textstyle \int_{\xi' \sim \Xi} \int_{\xi \sim \Xi} L_f \cdot \max \big\{
  \sqrt{V_d (\mathcal{A} (z, x, \xi), \mathcal{A} (z, x, \xi'))}, \\
  & \qquad\qquad\qquad\qquad\qquad\quad \sqrt{V_d
  (\mathcal{A} (z, x, \xi'), \mathcal{A} (z, x, \xi))} \big\} d \mu_{\xi} d
  \mu_{\xi'}\\
   \leq{}&\textstyle \int_{\xi' \sim \Xi} \int_{\xi \sim \Xi} \frac{2L_f^2}{\gamma -
  \kappa} d \mu_{\xi} d \mu_{\xi'} = \frac{2L_f^2}{\gamma - \kappa},\label{stability:result}
\end{align}
where the third equality holds since $\xi$ and $\xi'$ are from the same distribution; \eqref{stability:Jensen-2} follows from Jensen's inequality and \eqref{stability:result} uses \eqref{stability:max-bound}. Plugging in $z = x^{k - \tau_k}, x = z^k, \xi' = \xi^{k - \tau_k}$ and
$\gamma = \gamma_k$ into \eqref{stability:result} completes the proof.


\textbf{\underline{Proof of Lemma \ref{lem:dspl-descent-bregman}}}\\

First, we have, by the three-point lemma and optimality of $\hat{x}^k$, that
\begin{align*}
  & f_{x^{k - \tau_k}} (x^{k + 1}, \xi^{k - \tau_k}) + \omega(x^{k+1}) + \gamma_k V_d (x^{k + 1},
  x^k) \\
  & \leq{} f_{x^{k - \tau_k}} (\hat{x}^k, \xi^{k - \tau_k}) + \omega(\hat{x}^k) + \gamma_k
  V_d (\hat{x}^k, x^k) - (\gamma_k -\kappa) V_d (\hat{x}^k, x^{k + 1})
\end{align*}
and that
\[	f (\hat{x}^k) +  \omega(\hat{x}^k) + \rho V_d (\hat{x}^k, x^k) \leq f (x^{k + 1}) + \omega({x}^{k + 1}) +  \rho V_d
  (x^{k + 1}, x^k)\]
Summing the above two relations and taking expectations, we have
\begin{align}
  & (\gamma_k - \rho) \mathbb{E}_k[V_d (x^{k + 1}, x^k)] - (\gamma_k -
  \rho) V_d (\hat{x}^k, x^k) + (\gamma_k - \kappa) \mathbb{E}_k[V_d (\hat{x}^k, x^{k +
  1})] \nonumber \\
  \leq{} &  \Ebb_{\xi^{k-\tau_k}}[f_{x^{k - \tau_k}} (\hat{x}^k, \xi^{k - \tau_k})] - f (\hat{x}^k)
  +\mathbb{E}_k[f (x^{k + 1})] -\mathbb{E}_k[ f_{x^{k - \tau_k}} (x^{k
  + 1}, \xi^{k - \tau_k})] \nonumber \\
  = {} & \Ebb_{\xi^{k-\tau_k}}[f_{x^{k - \tau_k}} (\hat{x}^k, \xi^{k - \tau_k})] - f (\hat{x}^k)
  +\mathbb{E}_k[f (x^{k + 1})] -\mathbb{E}_k[ \mathbb{E}_{\xi}[
  f_{x^{k - \tau_k}} (x^{k + 1}, \xi)]] \nonumber \\
  & +\mathbb{E}_k[ \mathbb{E}_{\xi}[ f_{x^{k - \tau_k}} (x^{k
  + 1}, \xi)]] -\mathbb{E}_k[ f_{x^{k - \tau_k}} (x^{k +
  1}, \xi^{k - \tau_k})] \nonumber \\
  \leq{} &  \frac{\lambda}{2} \| x^{k - \tau_k} - \hat{x}^k \|^2 +
  \frac{\lambda}{2} \mathbb{E}_k[\| x^{k + 1} - x^{k - \tau_k} \|^2] +
  \frac{2L_f^2}{\gamma_k - \kappa}, \label{dspl-sum-ineq}
\end{align}
where \eqref{dspl-sum-ineq} uses \ref{P2} and \textbf{Lemma \ref{lem:stability}}. Next we lower-bound
the left-hand side using the 1-strong convexity of kernel $d$
\begin{align}
  &  \frac{\gamma_k - \rho}{2} \mathbb{E}_k[\| x^{k + 1} - x^k \|^2] -
  (\gamma_k - \rho) V_d (\hat{x}^k, x^k) + (\gamma_k - \kappa) \mathbb{E}_k[V_d
  (\hat{x}^k, x^{k + 1})] \nonumber \\
  \leq{} & (\gamma_k - \rho) \mathbb{E}_k[V_d (x^{k + 1}, x^k)] - (\gamma_k
  - \rho) V_d (\hat{x}^k, x^k) + (\gamma_k - \kappa) \mathbb{E}_k[V_d (\hat{x}^k, x^{k +
  1})] .
\end{align}
Re-arranging the terms,
\begin{align}
  & (\gamma_k - \kappa) \mathbb{E}_k[V_d (\hat{x}^k, x^{k + 1})]+ \frac{\gamma_k - \rho}{2} \mathbb{E}_k[\| x^{k + 1} - x^k \|^2]\\
  \leq{} &  (\gamma_k - \rho) V_d (\hat{x}^k, x^k) + \frac{\lambda}{2}
  \mathbb{E}_k[\| x^{k + 1} - x^{k - \tau_k} \|^2] + \frac{\lambda}{2} \|
  x^{k - \tau_k} - \hat{x}^k \|^2 + \frac{2L_f^2}{\gamma_k - \kappa} \nonumber \\
  \leq{} &  (\gamma_k - \rho) V_d (\hat{x}^k, x^k) + \frac{3 \lambda}{2}
  \mathbb{E}_k[\| x^{k + 1} - x^{k - \tau_k} \|^2] + \lambda \mathbb{E}_k[\|
  x^{k + 1} - \hat{x}^k \|^2] + \frac{2L_f^2}{\gamma_k - \kappa} \label{bregman:5} \\
  ={}  & (\gamma_k - \rho) V_d (\hat{x}^k, x^k) + \frac{3 \lambda}{2}
  \mathbb{E}_k[\| x^{k + 1} - x^{k - \tau_k} \|^2] + \frac{2L_f^2}{\gamma_k - \kappa} + 2 \lambda \mathbb{E}_k[V_d (\hat{x}^k,
  x^{k + 1})]\nonumber\\
  &  + \lambda \mathbb{E}_k[\| x^{k + 1} - \hat{x}^k \|^2 - 2 V_d
  (\hat{x}^k, x^{k + 1})]\nonumber\\
   \leq{} & (\gamma_k - \rho) V_d (\hat{x}^k, x^k) + \frac{3 \lambda}{2}
  \mathbb{E}_k[\| x^{k + 1} - x^{k - \tau_k} \|^2] + \frac{2L_f^2}{\gamma_k - \kappa} + 2 \lambda \mathbb{E}_k[V_d (\hat{x}^k,
  x^{k + 1})] \label{bregman:6}
\end{align}
where the \eqref{bregman:5} uses $\| a + b \|^2 \leq
2 \| a \|^2 + 2 \| b \|^2$ and \eqref{bregman:6} again follows by $V_d
(\hat{x}^k, x^{k + 1}) \geq \frac{1}{2} \| x^{k + 1} - \hat{x}^k \|^2$. Now we
re-arrange the terms and divide both sides by $\gamma_k - 2 \lambda - \kappa$ to get
\begin{align}
 &  \mathbb{E}_k[V_d (\hat{x}^k, x^{k + 1})] \nonumber \\
  \leq{} & \frac{\gamma_k -
  \rho}{\gamma_k - 2 \lambda - \kappa} V_d (\hat{x}^k, x^k) - \frac{\gamma_k - \rho}{2(\gamma_k - 2 \lambda - \kappa)} \mathbb{E}_k[\| x^{k + 1} - x^k \|^2] \nonumber \\
  & + \frac{2L_f^2}{(\gamma_k - \kappa)(\gamma_k -  2 \lambda - \kappa)} + \frac{3 \lambda}{2
  (\gamma_k - 2 \lambda - \kappa)} \mathbb{E}_k[\| x^{k + 1} - x^{k - \tau_k} \|^2]  \nonumber \\
  ={} &  V_d (\hat{x}^k, x^k) - \frac{\rho - 2 \lambda - \kappa}{\gamma_k - 2 \lambda - \kappa}
  V_d (\hat{x}^k, x^k) - \frac{\gamma_k - \rho}{2(\gamma_k - 2 \lambda - \kappa)} \mathbb{E}_k[\| x^{k + 1} - x^k \|^2] \nonumber \\
  & + \frac{2L_f^2}{(\gamma_k - \kappa)(\gamma_k -
  2 \lambda - \kappa)} + \frac{3 \lambda}{2 (\gamma_k - 2 
  \lambda - \kappa)} \mathbb{E}_k[\| x^{k + 1} - x^{k - \tau_k} \|^2] \label{bregman:7} 
\end{align}
Now we are ready to evaluate the decrease in the potential function.
\begin{align}
  & \mathbb{E}_k[\psi^{d}_{1 / \rho} (x^{k + 1})] \nonumber\\
   ={} & \mathbb{E}_k[f (\hat{x}^{k + 1}) + \omega(\hat{x}^{k + 1}) +  \rho V_d (\hat{x}^{k + 1}, x^{k +
  1})] \nonumber\\
   \leq{} & \mathbb{E}_k[f (\hat{x}^k) + \omega(\hat{x}^k) + \rho V_d (\hat{x}^k, x^{k + 1})] \nonumber\\
   \leq{} & f (\hat{x}^k) + \omega(\hat{x}^k) + \rho V_d (\hat{x}^k, x^k) -
  \frac{\rho (\rho - 2 \lambda - \kappa)}{\gamma_k - 2 \lambda - \kappa} V_d (\hat{x}^k, x^k) \label{bregman:8} \\
  & \quad + \frac{3 \rho \lambda}{2 (\gamma_k - 2 \lambda - \kappa)} \mathbb{E}_k[\|
  x^{k + 1} - x^{k - \tau_k} \|^2] + \frac{2 \rho L_f^2}{(\gamma_k - \kappa)(\gamma_k -
  2 \lambda - \kappa)}  \nonumber\\
  & \quad - \frac{\rho(\gamma_k - \rho)}{2(\gamma_k - 2 \lambda - \kappa)} \mathbb{E}_k[\| x^{k + 1} - x^k \|^2]
 \nonumber \\
  ={} &  \psi^{d}_{1 / \rho} (x^k) - \frac{\rho (\rho - 2 \lambda - \kappa)}{\gamma_k - 2 \lambda - \kappa} V_d (\hat{x}^k, x^k) - \frac{\rho(\gamma_k - \rho)}{2(\gamma_k - 2 \lambda - \kappa)} \mathbb{E}_k[\| x^{k + 1} - x^k \|^2] \nonumber\\
  & \quad + \frac{3 \rho \lambda}{2 (\gamma_k - 2 \lambda - \kappa)} \mathbb{E}_k[\|
  x^{k + 1} - x^{k - \tau_k} \|^2] + \frac{2 \rho L_f^2}{(\gamma_k - \kappa)(\gamma_k -
  2 \lambda - \kappa)} \nonumber,
\end{align}
where \eqref{bregman:8} plugs in the relation from \eqref{bregman:7}. Another re-arrangement completes the proof.\\

\textbf{\underline{Proof of Theorem \ref{thm:dspl-bregman}}}\\

Summing the relation from {\textbf{Lemma \ref{lem:dspl-descent-bregman}}} from $k = 1, \ldots, K$ and multiplying both sides by $\gamma - 2\lambda - \kappa$,
\begin{align}
  & \rho (\rho - 2 \lambda - \kappa) \mathbb{E} [V_d (\hat{x}^{k^{\ast}}, x^{k^{\ast}})] \nonumber \\
  ={} &  \frac{\rho (\rho - 2 \lambda - \kappa)}{K}  \sum_{k = 1}^K \Ebb[ V_d (\hat{x}^k, x^k) ] \label{eqn:descent-dspl}\\
  \leq{} & \frac{\gamma - 2
  \lambda - \kappa}{K} \big\{ \psi^{d}_{1 / \rho} (x^1)
  -\mathbb{E}[\psi^{d}_{1 / \rho} (x^{K + 1})] \big\} + \frac{2 \rho L_f^2}{\gamma - \kappa} \nonumber\\
  &  + \frac{3 \rho \lambda}{2K} \sum_{k = 1}^K \mathbb{E}[\| x^{k + 1} - x^{k - \tau_k} \|^2 - \frac{\rho(\gamma-\rho)}{2K} \sum_{k = 1}^K \Ebb [\| x^{k + 1} - x^k \|^2]\nonumber \\
 \leq{} & \frac{(\gamma - 2
  \lambda - \kappa)D}{K}  + \frac{2 \rho L_f^2}{\gamma - \kappa} + \frac{
 \rho}{K} \sum_{k = 1}^K \mathbb{E}\Big[ -\frac{\gamma - \rho}{2} \| x^{k + 1} - x^k \|^2 + \frac{3\lambda}{2}\| x^{k + 1} - x^{k - \tau_k} \|^2 \Big], \label{eqn:thm4-1}
\end{align}
where \eqref{eqn:thm4-1} uses the relation $\psi^d_{1/\rho}(x^{K+1}) \geq \inf_x \psi^d(x)$.
Now it remains to bound the error of delays and recall that we have
\begin{equation}
	\sum_{k = 1}^K \|x^{k + 1} - x^{k - \tau_k} \|^2 \leq 2\sum_{k = 1}^K \|x^{k
   + 1} - x^k \|^2 + 4\gamma^{- 2} (L_f + L_{\omega})^2 \sum_{k = 1}^K \tau_k^2 \label{eqn:thm4-4}
\end{equation}

and plugging the bounds back, we have
\begin{align}
  & \rho (\rho - 2 \lambda - \kappa) \mathbb{E} [V_d (\hat{x}^{k^{\ast}},
  x^{k^{\ast}})] \nonumber\\
  \leq{} & \frac{(\gamma - 2 \lambda - \kappa) D}{K} + \frac{2 \rho
  L_f^2}{\gamma - \kappa} + \frac{6 \rho \lambda (L_f + L_{\omega})^2}{\gamma^2 K} 
  \sum_{k = 1}^K \tau_k^2 \nonumber\\
  \leq{} & \frac{(\rho + 4\lambda) D}{K} + \frac{D}{\sqrt{K} \alpha} + \frac{2
  \rho L_f^2\alpha}{\sqrt{K} } + \frac{6 \rho \lambda (L_f + L_{\omega})^2
  \alpha^2}{ K} \Delta_2 \label{eqn:thm4-2},
\end{align}

where in \eqref{eqn:thm4-2} we cancel the first summation from \eqref{eqn:thm4-4} since $\gamma - \rho = \sqrt{K} / \alpha + 6 \lambda + \kappa\geq  6 \lambda$. Finally we divide both sides by $\rho (\rho - 2 \lambda - \kappa)$ to complete the proof.


\subsection{Proof of Lemma \ref{lem:dspl-descent} and Theorem \ref{thm:dspl}}\label{app:rate-dspl}

First $d(x)=\frac{1}{2}\|x\|^2$ satisfies \ref{Ap4}. Since \ref{A1}, \ref{A2}, \ref{A3}, \ref{A5} and \ref{C1}  
are equivalent to \ref{Ap1}, \ref{Ap2}, \ref{Ap3}, \ref{Ap5} and \ref{Cp1}, noticing that \textbf{Lemma \ref{lem:dspl-descent}} is a special case of \textbf{Lemma \ref{lem:dspl-descent-bregman}} and \textbf{Theorem \ref{thm:dspl}} is a special case of \textbf{Theorem \ref{thm:dspl-bregman}}, we complete the proof.

\subsection{Proof of Theorem \ref{coro:dsmd}}

Recall that in the proof of {\dspl}, we actually used \ref{A5} and the properties from \textbf{Proposition \ref{prop:property-dspl}} that are deduced from \ref{A1}, \ref{A2}, \ref{A3} and \ref{C1}. Indeed, it is straightforward to verify that $f_z(x, \xi) = \langle \nabla f(z, \xi), x - z\rangle$ satisfies \textbf{Proposition \ref{prop:property-dspl}} given \ref{A1}, \ref{A2}, \ref{A3} and $\lambda$-smoothness of $f$.

%

\section{Delay-independent analysis} \label{app:independent}

In this section, we further improve our analysis and show that by adopting a
simple safeguarding strategy during the iterations.

\subsection{Proof of Lemma \ref{lem:counting}}
We show $\sum_{k = 1}^K \tau_k \leq m K$ by noticing the following facts: \textbf{1)}. at each iteration there are at most $m - 1$ agents accumulating 1 unit of delay; \textbf{2)}. $\tau_k$ must come from a machine's previous delay accumulation. \textbf{3)}. once the gradient is used, delay on a machine starts count from 0.
Assuming that at $k = 1$ there are no previously accumulated delays on each agent, 
then we have $\sum_{k = 1}^K \tau_k \leq (m - 1) K \leq mK$. We use a loose bound $mK$ to simplify notation.

\subsection{Proof of Lemma \ref{lem:sequence}}
We know that $T \sum_{k = 1}^K \mathbb{I} \{\tau_k \geq T\} \leq \sum_{k =
1}^K \tau_k \leq mK$ and recall that $T = r^{-1} m K^\beta$:
\[ \sum_{k = 1}^K \mathbb{I} \{\tau_k \leq T\} \geq K - \sum_{k = 1}^K
   \mathbb{I} \{\tau_k \geq T\} \geq K - \frac{mK}{T} = K - r K^{1 - \beta}. \]
   Also we have $\sum_{k=1}^K \tau_k^2 \leq T \sum_{k=1}^K \tau_k\leq mKT$.

\subsection{Proof of Theorem \ref{thm:guarddsgd}}
First we rewrite \textbf{Lemma \ref{lem:dsmd-descent}} by associating
potential reduction with $I_k \assign \mathbb{I} \{\tau_k \leq T \}$.
\begin{align}
  \mathbb{E}_k [\psi_{1 / \rho} (x^{k + 1})] \leq{} & \psi_{1 / \rho} (x^k) -
  \frac{\rho (\rho - 2 \lambda - \kappa) I_k}{2 (\gamma_k - 2 \lambda -
  \kappa)} \| \hat{x}^k - x^k \|^2 - \frac{\rho (\gamma_k - \rho) I_k}{2
  (\gamma_k - 2 \lambda - \kappa)} \mathbb{E}_k [\| x^{k + 1} - x^k \|^2]
  \nonumber\\
  & + \frac{\rho \lambda I_k}{\gamma_k - 2 \lambda - \kappa} \mathbb{E}_k
  [\|x^{k + 1} - x^{k - \tau_k} \|^2] + \frac{2 \rho L_f I_k}{\gamma_k - 2
  \lambda - \kappa} \mathbb{E}_k [\|x^{k + 1} - x^{k - \tau_k} \|], \nonumber
\end{align}
where we have $x^{k+1} = x^k$, thus $\mathbb{E}_k [\psi_{1 / \rho} (x^{k + 1})] = \psi_{1 / \rho} (x^{k})$ if $I_k = 0$.

From \textbf{Lemma \ref{lem:sequence}}, we know that $\sum_{k = 1}^K I_k
\geq K - r K^{1 - \beta} = K (1 - r K^{- \beta}) = \eta^{- 1} K$. Then similar telescopic sum over the \textit{un-skipped} iterations will give
\begin{align}
  & \frac{\rho (\rho - 2 \lambda - \kappa)}{2} \mathbb{E} [\|
  \hat{x}^{k^{\ast}} - x^{k^{\ast}} \|^2] \nonumber\\
  ={} & \frac{\rho (\rho - 2 \lambda - \kappa)}{2 \sum_{k = 1}^K I_k} \sum_{k =
  1}^K \mathbb{E} [\| \hat{x}^k - x^k \|^2 I_k] \nonumber\\
  \leq{} & \frac{\gamma - 2 \lambda - \kappa}{\sum_{k = 1}^K I_k} \{ \psi_{1 /
  \rho} (x^1) -\mathbb{E} [\psi_{1 / \rho} (x^{K + 1})] \} - \frac{\rho
  (\gamma - \rho)}{2 \sum_{k = 1}^K I_k} \sum_{k = 1}^K \mathbb{E} [\| x^{k +
  1} - x^k \|^2] I_k \label{eqn:thmsafe-0}\\
  & + \frac{\rho \lambda}{\sum_{k = 1}^K I_k} \sum_{k = 1}^K \mathbb{E}
  [\|x^{k + 1} - x^{k - \tau_k} \|^2] I_k + \frac{2 \rho L_f}{\sum_{k = 1}^K
  I_k} \sum_{k = 1}^K \mathbb{E} [\|x^{k + 1} - x^{k - \tau_k} \|] I_k
\nonumber \\
  \leq{} & \frac{\rho D}{\sum_{k = 1}^K I_k} + \frac{\sqrt{K} D}{\alpha
  \sqrt{\eta} \sum_{k = 1}^K I_k} \label{eqn:thmsafe-1} \\
  & + \frac{\rho}{\sum_{k = 1}^K I_k} \sum_{k = 1}^K \mathbb{E} \big[ -
  \frac{\gamma - \rho}{2} \| x^{k + 1} - x^k \|^2 + \lambda \|x^{k + 1} - x^{k
  - \tau_k} \|^2 + 2 L_f \|x^{k + 1} - x^{k - \tau_k} \| \big] I_k, \nonumber
\end{align}
where \eqref{eqn:thmsafe-0} uses the fact that $\gamma_k \equiv \gamma$ is the same for all the iterations; \eqref{eqn:thmsafe-1} uses the relation $\gamma = \frac{\sqrt{K}}{\alpha \sqrt{\eta}} + \rho + \kappa + 4 \lambda \geq \max\{\frac{\sqrt{K}}{\alpha \sqrt{\eta}}, \rho+\kappa+4\lambda\}$. Then we bound the above terms, respectively, by
\begin{align}
   \frac{\rho D}{\sum_{k = 1}^K I_k} \leq &\frac{\rho D}{\eta^{- 1} K} =
   \frac{\rho \eta D}{K} \label{eqn:eta-1}\\
   \frac{\sqrt{K} D}{\alpha \sqrt{\eta}  \sum_{k = 1}^K I_k} \leq &
   \frac{\sqrt{K} D}{\alpha \eta^{- 1 / 2} K} = \frac{\sqrt{\eta} D}{\sqrt{K}
   \alpha}, \label{eqn:eta-2}
\end{align}
where we used the fact that $\sum_{k=1}^K I_k \leq K$. Meanwhile,
\begin{align}
    \frac{2 \rho L_f}{\sum_{k = 1}^K I_k}  \sum_{k = 1}^K \|x^{k + 1} - x^{k -
  \tau_k} \| I_k \leq{} &   \frac{2 \rho L_f}{\sum_{k = 1}^K I_k}  \sum_{k = 1}^K
  \|x^{k + 1} - x^{k - \tau_k} \| \nonumber\\
    \leq{} &    \frac{4 \sqrt{\eta} \rho L_f  (L_f + L_{\omega}) \alpha}{\sqrt{K}
  \eta^{- 1}}  \sum_{k = 1}^K (\tau_k + 1) \nonumber\\
  \leq{} &   \frac{4 \eta^{3 / 2} \rho L_f  (L_f + L_{\omega}) m \alpha}{\sqrt{K}}, \label{eqn:eta-3}
\end{align}
where \eqref{eqn:eta-3} uses a tighter bound $\sum_{k=1}^K \tau_k \leq (m-1)K$. 
Finally, we bound the delays using
\begin{align}
  \tfrac{2 \rho \lambda}{\sum_{k = 1}^K I_k}  \sum_{k = 1}^K \Ebb[\|x^k - x^{k -
  \tau_k} \|^2] I_k \leq \tfrac{8 \eta \rho \lambda (L_f + L_{\omega})^2 \alpha^2}{K
  \sum_{k = 1}^K I_k}  \sum_{k = 1}^K \tau_k^2 I_k\leq \tfrac{8 \eta^2 \rho
  \lambda (L_f + L_{\omega})^2 \alpha^2}{K^2}  \sum_{k = 1}^K \tau_k^2 I_k,
  \nonumber
\end{align}
and that $\frac{1}{K^2}\sum_{k = 1}^K \tau_k^2 I_k \leq \frac{mKT}{K^2} =\frac{m^2}{rK^{1-\beta}}$. Putting the above bounds back, using the same technique as in \eqref{eqn:thm1-4} to cancel the error $2\lambda\|x^{k + 1} - x^k\|^2$ with $-\sum_{k=1}^K \frac{\gamma - \rho}{2}\|x^{k + 1} - x^k\|^2, \gamma - \rho \geq 4\lambda$, we can re-arrange the terms to complete the proof.

\subsection{Proof of Theorem \ref{thm:guarddspl}}

We prove \textbf{Theorem \ref{thm:guarddspl}} by showing its Bregman version.
\begin{thm}[Safeguarded {\dspl} in Bregman setting]\label{thm:guardbregdspl}
Under the same conditions as \begin{rm}{\textbf{Lemma \ref{lem:dspl-descent-bregman}}}\end{rm} as well as \ref{D1}, taking \textbf{1)} $\beta > 0, K> r^{1/\beta}$ or \textbf{2)} $\beta = 0, r < 1$, then letting $\gamma_k \equiv \gamma = \frac{\sqrt{K}}{\alpha \sqrt{\eta}} + \rho + \kappa + 6 \lambda, \eta = 1 + \frac{r}{K^\beta - r}$ for some $\alpha > 0$ and $k^{\ast}$ be uniformly chosen between iterations where $\tau_k \leq \sqrt{K}$, then
\begin{align}
& \mathbb{E} [V_d (\hat{x}^{k^\ast}, x^{k^\ast})] \leq \tfrac{1}{\rho(\rho - 2 \lambda - \kappa)} \Big[ \tfrac{\eta (\rho + 4\lambda) D}{K} + \tfrac{\sqrt{\eta} D}{\sqrt{K}
  \alpha} + \tfrac{2 \sqrt{\eta} \rho L_f \alpha}{\sqrt{K}} + \tfrac{6 \eta^2 \rho \lambda
  (L_f + L_{\omega})^2 m^2 \alpha^2}{K^{1/\beta}} \Big], \nonumber
  \end{align}
where $D = \psi^d_{1 / \rho} (x^1) - \inf_x \psi^d (x)$.
\end{thm}

\underline{\textbf{Proof of Theorem \ref{thm:guardbregdspl}}}

Similar to the proof of \textbf{Theorem \ref{thm:guarddsgd}}, we write the modified potential reduction by
\begin{align}
  \mathbb{E}_k [\psi^d_{1 / \rho} (x^{k + 1})] \leq{} & \psi^d_{1 / \rho} (x^k)
  - \frac{\rho (\rho - 2 \lambda - \kappa) I_k}{2 (\gamma_k - 2 \lambda -
  \kappa)} V_d (\hat{x}^k, x^k) - \frac{\rho (\gamma_k - \rho) I_k}{2
  (\gamma_k - 2 \lambda - \kappa)} \mathbb{E}_k [\| x^{k + 1} - x^k \|^2]
  \nonumber\\
  & + \frac{3 \rho \lambda I_k}{\gamma_k - 2 \lambda - \kappa} \mathbb{E}_k
  [\|x^{k + 1} - x^{k - \tau_k} \|^2] + \frac{2 \rho L_f I_k}{(\gamma_k -
  \kappa) (\gamma_k - 2 \lambda - \kappa)} \nonumber
\end{align}

Then telescoping gives
\begin{align}
  & \frac{\rho (\rho - 2 \lambda - \kappa)}{2} \mathbb{E} [V_d
  (\hat{x}^{k^{\ast}}, x^{k^{\ast}})] \nonumber\\
  ={} & \frac{\rho (\rho - 2 \lambda - \kappa)}{2 \sum_{k = 1}^K I_k} \sum_{k =
  1}^K \mathbb{E} [V_d (\hat{x}^k, x^k) I_k] \nonumber\\
  \leq{} & \frac{\gamma - 2 \lambda - \kappa}{\sum_{k = 1}^K I_k} \{ \psi_{1 /
  \rho} (x^1) -\mathbb{E} [\psi_{1 / \rho} (x^{K + 1})] \} - \frac{\rho
  (\gamma - \rho)}{2 \sum_{k = 1}^K I_k} \sum_{k = 1}^K \mathbb{E} [\| x^{k +
  1} - x^k \|^2] I_k \nonumber\\
  & + \frac{3 \rho \lambda}{\sum_{k = 1}^K I_k} \sum_{k = 1}^K \mathbb{E}
  [\|x^{k + 1} - x^{k - \tau_k} \|^2] I_k + \frac{2 \rho L_f}{\gamma -
  \kappa} \nonumber\\
  \leq{} & \frac{(\rho + 4\lambda) D}{\sum_{k = 1}^K I_k} + \frac{\sqrt{K}
  D}{\alpha \sqrt{\eta} \sum_{k = 1}^K I_k} + \frac{2 \rho L_f}{\gamma -
  \kappa} \label{eqn:robust-dspl} \\
  & + \frac{\rho}{\sum_{k = 1}^K I_k} \sum_{k = 1}^K \mathbb{E} \Big[ -
  \frac{\gamma - \rho}{2} \| x^{k + 1} - x^k \|^2 + \frac{3 \lambda}{2} \|x^{k
  + 1} - x^{k - \tau_k} \|^2 \Big] I_k \nonumber\\
  \leq{} & \frac{\eta (\rho + 4\lambda) D}{K} + \frac{\sqrt{\eta} D}{\sqrt{K}
  \alpha} + \frac{2 \sqrt{\eta} \rho L_f \alpha}{\sqrt{K}} + \frac{6 \eta^2 \rho \lambda
  (L_f + L_{\omega})^2 m^2 \alpha^2}{r K^{1-\beta}}, \nonumber
\end{align}
where \eqref{eqn:robust-dspl} reuses \eqref{eqn:eta-1}, \eqref{eqn:eta-2} and \eqref{eqn:eta-3}.
A re-arrangement completes the proof of \textbf{Theorem \ref{thm:guardbregdspl}}. Taking $d(x) = \frac{1}{2}\|x\|^2$ completes the proof of \textbf{Theorem \ref{thm:guarddspl}}.

\section{{\dspl} with momentum\label{sec:dsepl}}

\subsection{Preliminaries of {\dsepl}}
Momentum has been an important ingredient for a stochastic algorithm to be implemented in practice. In this section, we incorporate the extrapolation technique into the
delayed prox-linear algorithm. Before the master
performs a proximal update, it uses two recent iterates to compute
an extrapolated iterate $y^{k}$. Then a proximal update is done
centered around $y^{k}$ with delayed information. We summarize the procedure in \textbf{Algorithm \ref{alg:dsepl}}.
Throughout this section we take $\gamma_k \equiv \gamma$.

\begin{algorithm}[!h]
\KwIn{$x^{0}$, $x^{1}, \beta$\;}
\For{k =\rm{ 1, 2, $\ldots$}}{
Let $c(x^{k-\tau_{k}},\xi^{k - \tau_k})$ and $\nabla c(x^{k-\tau_{k}},\xi^{k - \tau_k})$
be computed by a worker with delay $\tau_{k}$\;
In the master node update
\ifthenelse{\boolean{doublecolumn}}{
\begin{align*}
 y^{k}  & = x^{k}+\beta(x^{k}-x^{k-1}),\\
 x^{k+1} & = \underset{x}{\argmin}\,\big\{ f_{x^{k-\tau_{k}}}(x,\xi^{k - \tau_k})\\
& \quad\quad\quad\quad\quad\quad+\omega(x)+\frac{\gamma}{2}\|x-y^{k}\|^{2}\big\}.
\end{align*}
}{
\begin{align*}
 y^{k}  & = x^{k}+\beta(x^{k}-x^{k-1}),\\
 x^{k+1} & = \underset{x}{\argmin}\,\big\{ f_{x^{k-\tau_{k}}}(x,\xi^{k - \tau_k}) +\omega(x)+\frac{\gamma}{2}\|x-y^{k}\|^{2}\big\}.
\end{align*}
}
}
\caption{Extrapolated {\dspl}\label{alg:dsepl}}
\end{algorithm}
\myquad To analyze {\dsepl}, we extend the framework
from \citeapp{deng2021minibatchdup} to the delayed case and the analysis
is based on a virtual iterate 
\[z^{k}\assign x^{k}+\beta\theta^{-1}(x^{k}-x^{k-1}),\]
where the extrapolation parameter, also known as momentum, is fixed
at some constant \textbf{$\beta\in[0,1)$ }and $\theta=1-\beta$.
Using the virtual iterate, {\dsepl} uses a more complicated potential function.
\begin{equation}
	\psi_{1/\rho}(z^k)+ \frac{\rho\beta}{(\gamma-\kappa)\theta^{2}}\psi(x^{k})+ \frac{\rho(\gamma \beta + 2\rho \beta^2 \theta^{-2})}{2(\gamma - \lambda) \theta} \|x^k - x^{k-1}\|^2 \label{dsepl-potential}
\end{equation}

As extrapolation increases the instability of the iterations, analysis of {\dsepl} furthers requires boundedness of the delays.

\condsep
\begin{enumerate}[leftmargin=*, start=1,label=\textbf{E\arabic*:},ref=\rm{\textbf{E\arabic*}}]
\item (Bounded delay) Independent delays are bounded by $\tau < \infty$. \label{E5}
\end{enumerate}
\condsep

The following lemma presents a similar descent property for the potential function.
\begin{lem}
\label{dsepl-descent}Suppose \ref{A1}, \ref{A2}, \ref{A3} and \ref{C1} hold. Given $0\leq\beta<1,\rho> 3\lambda+2\kappa\beta+\kappa$ and $\gamma\geq\rho$,
\ifthenelse{\boolean{doublecolumn}}{
\begin{align*}
  & \frac{(\rho-\kappa\theta)}{2\rho(\gamma-\kappa)\theta}\|\nabla\psi_{1/\rho}(z^{k})\|^{2}\\
   & \leq \psi_{1/\rho}(z^{k})-\mathbb{E}_{k}[\psi_{1/\rho}(z^{k+1})] + \frac{\rho\mathbb{E}_{k}[\varepsilon_{k}]}{(\gamma-\kappa)\theta^{2}}\\
   & + \frac{\rho\beta}{(\gamma-\kappa)\theta^{2}}\{\psi(x^{k})-\mathbb{E}_{k}[\psi(x^{k+1})]\} + \frac{2\rho L_f^{2}}{(\gamma-\kappa)^{2}\theta^{2}} \\
  &  + \frac{\rho(2\rho\beta^{2}\theta^{-1}+\gamma\beta\theta)}{2(\gamma-\kappa)\theta^{2}}\{\|x^{k}-x^{k-1}\|^{2}-\mathbb{E}_{k}[\|x^{k+1}-x^{k}\|^{2}]\}\\
  &\quad -\frac{\rho(\gamma\theta^{2}-2\theta(\rho+\kappa\beta)-2\rho\beta^{2}\theta^{-1})}{2(\gamma-\kappa)\theta^{2}}\mathbb{E}_{k}[\|x^{k+1}-x^{k}\|^{2}],
\end{align*}}{
\begin{align*}
  & \frac{(\rho-\kappa\theta)}{2\rho(\gamma-\kappa)\theta}\|\nabla\psi_{1/\rho}(z^{k})\|^{2}\\
   &\quad \leq \psi_{1/\rho}(z^{k})-\mathbb{E}_{k}[\psi_{1/\rho}(z^{k+1})]+\frac{\rho\beta}{(\gamma-\kappa)\theta^{2}}\{\psi(x^{k})-\mathbb{E}_{k}[\psi(x^{k+1})]\}\\
  & \qquad +\frac{\rho(2\rho\beta^{2}\theta^{-1}+\gamma\beta\theta)}{2(\gamma-\kappa)\theta^{2}}\{\|x^{k}-x^{k-1}\|^{2}-\mathbb{E}_{k}[\|x^{k+1}-x^{k}\|^{2}]\}\\
  &\qquad +\frac{2\rho L_f^{2}}{(\gamma-\kappa)^{2}\theta^{2}}-\frac{\rho(\gamma\theta^{2}-2\theta(\rho+\kappa\beta)-2\rho\beta^{2}\theta^{-1})}{2(\gamma-\kappa)\theta^{2}}\mathbb{E}_{k}[\|x^{k+1}-x^{k}\|^{2}]+\frac{\rho\mathbb{E}_{k}[\varepsilon_{k}]}{(\gamma-\kappa)\theta^{2}},
\end{align*}
}
{\tmem{where $\varepsilon_{k}=(\lambda\theta+\frac{\lambda}{2})\|x^{k+1}-x^{k-\tau_{k}}\|^{2}+\frac{\lambda(1-\theta)}{2}\|x^{k}-x^{k-\tau_{k}}\|^{2}$}}
characterizes the error of delay.
\end{lem}
We next bound the stochastic delay using \ref{E5} and eliminate the delays.
\begin{lem}
\label{dsepl-conv}Under the same conditions as\begin{rm} \textbf{Lemma \ref{dsepl-descent}}\end{rm}
as well as \ref{E5}, given $\gamma>\max\left\{ \rho,2(\rho+\kappa\beta)\theta^{-1}+2\rho\beta^{2}\theta^{-3}\right\} $,\ifthenelse{\boolean{doublecolumn}}{
\begin{align*}
&\mathbb{E}\sbra{\|\nabla\psi_{1/\rho}(z^{k^{\ast}})\|^{2}} \\& \le\frac{2\rho\theta}{\rho-\kappa\theta}\Big[\frac{(\gamma-\kappa+\rho\beta\theta^{-2})D}{K}+\frac{2\rho L_f^{2}}{(\gamma-\kappa)\theta^{2}}\\
 & \quad +\frac{\rho\lambda(3\tau^{2}+2\beta)}{2\theta^{2}K}\sum_{k=1}^{K}\mathbb{E}\|x^{k+1}-x^{k}\|^{2}\Big],
\end{align*}
}{
\begin{align*}
\mathbb{E}\sbra{\|\nabla\psi_{1/\rho}(z^{k^{\ast}})\|^{2}} & \le\tfrac{2\rho\theta}{\rho-\kappa\theta}\Big[\tfrac{(\gamma-\kappa+\rho\beta\theta^{-2})D}{K}+\tfrac{2\rho L_f^{2}}{(\gamma-\kappa)\theta^{2}}  +\tfrac{\rho\lambda(3\tau^{2}+2\beta)}{2\theta^{2}K}\sum_{k=1}^{K}\mathbb{E} [\|x^{k+1}-x^{k}\|^{2} ] \Big],
\end{align*}
}
where $D=\max\{\psi_{1/\rho}(z^{1})-\psi_{1/\rho}(z^{\ast}),\psi(x^{1})-\psi(x^{\ast})\}$.
\end{lem}
\begin{rem}
\textbf{Lemma \ref{dsepl-conv}} reveals the effect of extrapolation.
After simplification $\theta=1-\beta$ appears in the
denominator of the error from delay and will enlarge such an error if
$\beta\rightarrow1$. This is intuitive since when extrapolating
using two iterations generated from delayed information, the error is magnified.
\end{rem}
By taking less aggressive steps, we  bound the term $\sum_{k=1}^{K}\mathbb{E}[\|x^{k+1}-x^{k}\|^{2}]$ and arrive at the final convergence result for {\dsepl}.
\begin{thm}
\label{thm2}Under the same conditions as \begin{rm} \textbf{{{Lemma}} \ref{dsepl-conv}}\end{rm},
if we further choose $\gamma\geq2\theta^{-1}(\rho+\kappa\beta)+2\rho\theta^{-3}\beta^{2}+2\lambda\theta^{-2}\beta+3\lambda\theta^{-1}\tau^{2}$,
then 
\ifthenelse{\boolean{doublecolumn}}{
\begin{align*}
& \mathbb{E}\sbra{\|\nabla\psi_{1/\rho}(z^{k^{\ast}})\|^{2}} \\
& \le\frac{2\rho\theta}{(\rho-\kappa\theta)\sqrt{K}}\Big(\frac{\lambda(3\tau^{2}+2\beta)}{\Gamma}+1\Big) \Big[\frac{(\gamma-\kappa+\rho\beta\theta^{-2})D}{\sqrt{K}}+\frac{2\rho L_f^{2}\sqrt{K}}{(\gamma-\kappa)\theta^{2}}\Big],
\end{align*}
}{
\begin{align*}
     \mathbb{E}\sbra{\|\nabla\psi_{1/\rho}(z^{k^{\ast}})\|^{2}}  \le\frac{2\rho\theta}{(\rho-\kappa\theta)\sqrt{K}}\Big(\frac{\lambda(3\tau^{2}+2\beta)}{\Gamma_\beta}+1\Big) \left[\frac{(\gamma-\kappa+\rho\beta\theta^{-2})D}{\sqrt{K}}+\frac{2\rho L_f^{2}\sqrt{K}}{(\gamma-\kappa)\theta^{2}}\right],
\end{align*}
}
where $\Gamma_\beta \assign\gamma\theta^{2}-2\theta(\rho+\kappa\beta)-2\rho\beta^{2}\theta^{-1}-3\lambda\tau^{2}-2\lambda\beta$.
\end{thm}

\begin{rem}
If $\gamma=\mathcal{{O}}(\sqrt{{K}})$, then
$\frac{(\gamma-\kappa+\rho\beta\theta^{-2})D}{K}+\frac{2\rho L_f^{2}}{(\gamma-\kappa)\theta^{2}}=\mathcal{{O}}(\frac{1}{\sqrt{{K}}})$,
$\Gamma_\beta^{-1}=\mathcal{{O}}(\frac{1}{\sqrt{{K}}})$ and
\[
\mathbb{E}[\|\nabla\psi_{1/\rho}(z^{k^{\ast}})\|^{2}]=\mathcal{O}\Big(\frac{1}{K}+\frac{1}{\sqrt{{K}}}+(\frac{1}{K}+\frac{1}{\sqrt{{K}}})\frac{\tau^{2}}{\sqrt{{K}}}\Big),
\]
which implies that the delay is negligible if $\tau=o(K^{1/4})$.
\end{rem}
\begin{rem}
Although our convergence result is based on the extrapolated sequence, we can leverage \textbf{Lemma \ref{dsepl-descent}} to show that $\Ebb[\|x^{k^{*}+1}-x^{k^{*}}\|^{2}]$ , and subsequently
$\Ebb[\|z^{k^{*}}-x^{k^{*}}\|^{2}]$ is $\mathcal{O}(\frac{1}{K})$ . Using smoothness of the Moreau envelope, we finally have $\mathbb{E}[\|\nabla\psi_{1/\rho}(x^{k^{\ast}})\|^{2}]$
is $\mathcal{O}(\frac{1}{\sqrt{{K}}})$.
\end{rem}
\subsection{Convergence analysis of \dsepl}

In this section we present the convergence analysis for {\dsepl}.

\subsubsection{Auxiliary results}
To show the convergence of {\dsepl}, we need to define an auxiliary sequence
\[z^{k}\assign x^{k}+\frac{\beta}{1-\beta}(x^{k}-x^{k-1}).\]
Given $x^{k}$, define $\bar{x}=\beta x^{k}+(1-\beta)x$ for $x\in\rm{dom}(\omega)$
and $\theta=1-\beta$. The following identities hold
\begin{align*}
\bar{x}-x^{k} & =  \theta(x-x^{k})\\
\bar{x}-y^{k} & =  \theta(x-z^{k})\\
\bar{x}-x^{k+1} & =  \theta(x-z^{k+1}).
\end{align*}
and will be used frequently in the analysis.

\textbf{\underline{Proof of Lemma \ref{dsepl-descent}}}

First by the $(\gamma-\kappa)$-strong convexity of the proximal sub-problem,
we have 
\begin{align}
 & f_{x^{k-\tau_{k}}}(x^{k+1},\xi^{k - \tau_k})+\omega(x^{k+1})+\frac{\gamma}{2}\|x^{k+1}-y^{k}\|^{2}\nonumber \\
  &\quad \leq f_{x^{k-\tau_{k}}}(\bar{x},\xi^{k - \tau_k})+\omega(\bar{x})+\frac{\gamma}{2}\|\bar{x}-y^{k}\|^{2}-\frac{\gamma-\kappa}{2}\|x^{k+1}-\bar{x}\|^{2}\nonumber \\
&\quad= f_{x^{k-\tau_{k}}}(\bar{x},\xi^{k - \tau_k})+\omega(\bar{x})+\frac{\gamma\theta^{2}}{2}\|x-z^{k}\|^{2}-\frac{(\gamma-\kappa)\theta^{2}}{2}\|x-z^{k+1}\|^{2}.\label{eq:lem5:1}
\end{align}
Also, since $f_{x^{k-\tau_{k}}}(\cdot,\xi^{k - \tau_k})+\omega(\cdot)+\frac{\kappa}{2}\|\cdot-x^{k}\|^{2}$
is convex, we plug $\bar{x}$ in and apply convexity to get that 
\begin{align*}
& f_{x^{k-\tau_{k}}}(\bar{x},\xi^{k - \tau_k})+\omega(\bar{x})+\frac{\kappa}{2}\|\bar{x}-x^{k}\|^{2}\\
\leq{} & (1-\theta)\left[f_{x^{k-\tau_{k}}}(x^{k},\xi^{k - \tau_k})+\omega(x^{k})\right]+\theta\big[f_{x^{k-\tau_{k}}}(x,\xi^{k - \tau_k})+\omega(x)+\frac{\kappa}{2}\|x-x^{k}\|^{2}\big]\\
\leq{} & (1-\theta)\big[f_{x^{k-\tau_{k}}}(x^{k},\xi^{k - \tau_k})+\omega(x^{k})\big]+\theta\big[f(x)+\omega(x)+\frac{\kappa}{2}\|x-x^{k}\|^{2}+\frac{\lambda}{2}\|x-x^{k-\tau_{k}}\|^{2}\big],
\end{align*}
where the second inequality leverages two-sided approximation
since $\left|f_{x^{k-\tau_{k}}}(x,\xi^{k - \tau_k})-f(x)\right|\leq\frac{\lambda}{2}\|x-x^{k-\tau_{k}}\|^{2}$.
Then a simple re-arrangement gives 
\begin{align*}
&f_{x^{k-\tau_{k}}}(\bar{x},\xi^{k - \tau_k})+\omega(\bar{x}) \\
& \leq  (1-\theta)\left[f_{x^{k-\tau_{k}}}(x^{k},\xi^{k - \tau_k})+\omega(x^{k})\right]\\
 & \quad +\theta\bsbra{f(x)+\omega(x)+\frac{\kappa}{2}\|x-x^{k}\|^{2}+\frac{\lambda}{2}\|x-x^{k-\tau_{k}}\|^{2}}-\frac{\kappa\theta^{2}}{2}\|x-x^{k}\|^{2}.
\end{align*}
Now combining the above inequality with \eqref{eq:lem5:1}, we get that
\begin{align}
 & f_{x^{k-\tau_{k}}}(x^{k+1},\xi^{k - \tau_k})+\omega(x^{k+1})+\frac{\gamma}{2}\|x^{k+1}-y^{k}\|^{2} \nonumber\\
 \leq{} & (1-\theta)\big[f_{x^{k-\tau_{k}}}(x^{k},\xi^{k - \tau_k})+\omega(x^{k})\big]+\theta\bsbra{f(x)+\omega(x)+\frac{\kappa}{2}\|x-x^{k}\|^{2}+\frac{\lambda}{2}\|x-x^{k-\tau_{k}}\|^{2}}\nonumber\\
 {}& -\frac{\kappa\theta^{2}}{2}\|x-x^{k}\|^{2}+\frac{\gamma\theta^{2}}{2}\|x-z^{k}\|^{2}-\frac{(\gamma-\kappa)\theta^{2}}{2}\|x-z^{k+1}\|^{2}. \label{lem:proof8}
\end{align}
From now on we let $x=\hat{z}^{k}$ in  \eqref{lem:proof8} and successively deduce that 
\begin{align}
 & f_{x^{k-\tau_{k}}}(x^{k+1},\xi^{k - \tau_k})+\omega(x^{k+1})+\frac{\gamma}{2}\|x^{k+1}-y^{k}\|^{2}\nonumber \\
 \leq{} & (1-\theta)\left[f_{x^{k-\tau_{k}}}(x^{k},\xi^{k - \tau_k})+\omega(x^{k})\right]+\theta\big[f(\hat{z}^{k})+\omega(\hat{z}^{k})+\frac{\kappa}{2}\|\hat{z}^{k}-x^{k}\|^{2}+\frac{\lambda}{2}\|\hat{z}^{k}-x^{k-\tau_{k}}\|^{2}\big]\nonumber \\
& -\frac{\kappa\theta^{2}}{2}\|\hat{z}^{k}-x^{k}\|^{2}+\frac{\gamma\theta^{2}}{2}\|\hat{z}^{k}-z^{k}\|^{2}-\frac{(\gamma-\kappa)\theta^{2}}{2}\|\hat{z}^{k}-z^{k+1}\|^{2}.\nonumber \\
={} & (1-\theta)\left[f_{x^{k-\tau_{k}}}(x^{k},\xi^{k - \tau_k})+\omega(x^{k})\right]+\theta[f(\hat{z}^{k})+\omega(\hat{z}^{k})]+\frac{\kappa\theta-\kappa\theta^{2}}{2}\|\hat{z}^{k}-x^{k}\|^{2}+\frac{\lambda\theta}{2}\|\hat{z}^{k}-x^{k-\tau_{k}}\|^{2}\nonumber \\
 & +\frac{\gamma\theta^{2}}{2}\|\hat{z}^{k}-z^{k}\|^{2}-\frac{(\gamma-\kappa)\theta^{2}}{2}\|\hat{z}^{k}-z^{k+1}\|^{2}\nonumber \\
\leq{} & (1-\theta)\left[f_{x^{k-\tau_{k}}}(x^{k},\xi^{k - \tau_k})+\omega(x^{k})\right]+\theta[f(\hat{z}^{k})+\omega(\hat{z}^{k})]+\frac{\lambda\theta}{2}\|\hat{z}^{k}-x^{k-\tau_{k}}\|^{2}\nonumber \\
 & +\frac{\gamma\theta^{2}}{2}\|\hat{z}^{k}-z^{k}\|^{2}-\frac{(\gamma-\kappa)\theta^{2}}{2}\|\hat{z}^{k}-z^{k+1}\|^{2}+\theta\kappa\beta[\|x^{k+1}-x^{k}\|^{2}+\|\hat{z}^{k}-x^{k+1}\|^{2}] \label{lem:proof9} \\
\leq & (1-\theta)\left[f_{x^{k-\tau_{k}}}(x^{k},\xi^{k - \tau_k})+\omega(x^{k})\right]+\theta[f(\hat{z}^{k})+\omega(\hat{z}^{k})]+\theta(\kappa\beta+\lambda)\|\hat{z}^{k}-x^{k+1}\|^{2}\nonumber \\
& +\lambda\theta\|x^{k+1}-x^{k-\tau_{k}}\|^{2}+\frac{\gamma\theta^{2}}{2}\|\hat{z}^{k}-z^{k}\|^{2}-\frac{(\gamma-\kappa)\theta^{2}}{2}\|\hat{z}^{k}-z^{k+1}\|^{2}+\theta\kappa\beta\|x^{k+1}-x^{k}\|^{2},\label{eq:lem5:2}
\end{align}
where the second inequality \eqref{lem:proof9} applies $\|\hat{z}^{k}-x^{k}\|^{2}\leq2[\|x^{k+1}-x^{k}\|^{2}+\|\hat{z}^{k}-x^{k+1}\|^{2}]$
and the last inequality applies $\|\hat{z}^{k}-x^{k-\tau_{k}}\|^{2}\leq2\|\hat{z}^{k}-x^{k+1}\|^{2}+2\|x^{k+1}-x^{k-\tau_{k}}\|^{2}$.\\

By definition of $\hat{z}^{k}$, for $\rho>\lambda+\kappa$, we have
$(\rho-\lambda-\kappa)$-strong convexity of $\psi(x)+\frac{\rho}{2}\|\cdot-x\|^{2}$
and 
\begin{align}
 & f(\hat{z}^{k})+\omega(\hat{z}^{k})+\frac{\rho}{2}\|\hat{z}^{k}-z^{k}\|^{2} \nonumber\\
 &\quad \leq f(x^{k+1})+\omega(x^{k+1})+\frac{\rho}{2}\|x^{k+1}-z^{k}\|^{2}-\frac{\rho-\lambda-\kappa}{2}\|x^{k+1}-\hat{z}^{k}\|^{2}. \label{proof:10}
\end{align}
Multiplying \eqref{proof:10} by $\theta$ and adding it to \eqref{eq:lem5:2},
we have 
\begin{align}
& \frac{\gamma}{2}\|x^{k+1}-y^{k}\|^{2} \nonumber\\
\leq{} & (1-\theta)\big[f_{x^{k-\tau_{k}}}(x^{k},\xi^{k - \tau_k})+\omega(x^{k})\big]+\theta f(x^{k+1})-f_{x^{k-\tau_{k}}}(x^{k+1},\xi^{k - \tau_k})-(1-\theta)\omega(x^{k+1})\nonumber\\
& +\frac{\rho\theta}{2}\|x^{k+1}-z^{k}\|^{2}-\frac{\theta(\rho-3\lambda-2\kappa\beta-\kappa)}{2}\|x^{k+1}-\hat{z}^{k}\|^{2}+\lambda\theta\|x^{k+1}-x^{k-\tau_{k}}\|^{2}\nonumber\\
& +\frac{\gamma\theta^{2}-\rho\theta}{2}\|\hat{z}^{k}-z^{k}\|^{2}-\frac{(\gamma-\kappa)\theta^{2}}{2}\|\hat{z}^{k}-z^{k+1}\|^{2}+\theta\kappa\beta\|x^{k+1}-x^{k}\|^{2}. \label{lem:proof7}
\end{align}
Then we bound the first line of the right hand side in \eqref{lem:proof7} by 
\begin{align}
& (1-\theta)\left[f_{x^{k-\tau_{k}}}(x^{k},\xi^{k - \tau_k})+\omega(x^{k})\right]+\theta f(x^{k+1})-f_{x^{k-\tau_{k}}}(x^{k+1},\xi^{k - \tau_k})-(1-\theta)\omega(x^{k+1})\nonumber\\
\leq{} & (1-\theta)[f(x^{k},\xi^{k - \tau_k})+\omega(x^{k})-f(x^{k+1})-\omega(x^{k+1})]+f(x^{k+1})-\mathbb{E}_{\xi}\left[f_{x^{k-\tau_{k}}}(x^{k+1},\xi)\right]\nonumber\\
& +\mathbb{E}_{\xi}\left[f_{x^{k-\tau_{k}}}(x^{k+1},\xi)\right]-f_{x^{k-\tau_{k}}}(x^{k+1},\xi^{k - \tau_k})+\frac{(1-\theta)\lambda}{2}\|x^{k}-x^{k-\tau_{k}}\|^{2}\nonumber\\
\leq{} & (1-\theta)[f(x^{k},\xi^{k - \tau_k})+\omega(x^{k})-f(x^{k+1})-\omega(x^{k+1})]+\frac{2L_f^{2}}{\gamma-\kappa}\nonumber\\
& +\frac{\lambda}{2}[\|x^{k+1}-x^{k-\tau_{k}}\|^{2}+(1-\theta)\|x^{k}-x^{k-\tau_{k}}\|^{2}], \label{proof:11}
\end{align}
where the inequalities invoke two-sided approximation $\left|f_{x^{k-\tau_{k}}}(x^{k},\xi^{k - \tau_k})-f(x^{k},\xi^{k - \tau_k})\right|\leq\frac{\lambda}{2}\|x^{k}-x^{k-\tau_{k}}\|^{2}$
and bound $\mathbb{E}_{\xi}\left[f_{x^{k-\tau_{k}}}(x^{k+1},\xi)\right]-f_{x^{k-\tau_{k}}}(x^{k+1},\xi^{k - \tau_k})$
with \textbf{Lemma~\ref{lem:stability}}. Now plugging \eqref{proof:11}
back into \eqref{lem:proof7} gives 
\begin{align}
& \frac{\gamma}{2}\|x^{k+1}-y^{k}\|^{2}\nonumber\\
 \leq{} & (1-\theta)[f(x^{k},\xi^{k - \tau_k})+\omega(x^{k})-f(x^{k+1})-\omega(x^{k+1})]+\frac{2L_f^{2}}{\gamma-\kappa}\nonumber\\
& +\frac{\rho\theta}{2}\|x^{k+1}-z^{k}\|^{2}-\frac{\theta(\rho-3\lambda-2\kappa\beta-\kappa)}{2}\|x^{k+1}-\hat{z}^{k}\|^{2}\nonumber\\
& +\frac{\gamma\theta^{2}-\rho\theta}{2}\|\hat{z}^{k}-z^{k}\|^{2}-\frac{(\gamma-\kappa)\theta^{2}}{2}\|\hat{z}^{k}-z^{k+1}\|^{2}+\theta\kappa\beta\|x^{k+1}-x^{k}\|^{2}.\nonumber\\
& +\big(\lambda\theta+\frac{\lambda}{2}\big)\|x^{k+1}-x^{k-\tau_{k}}\|^{2}+\frac{\lambda(1-\theta)}{2}\|x^{k}-x^{k-\tau_{k}}\|^{2} \label{proof12}.
\end{align}
Isolating the delayed error by $\varepsilon_{k}=\left(\lambda\theta+\frac{\lambda}{2}\right)\|x^{k+1}-x^{k-\tau_{k}}\|^{2}+\frac{\lambda(1-\theta)}{2}\|x^{k}-x^{k-\tau_{k}}\|^{2}$
and we have, by algebraic manipulations of the momentum terms, that
\begin{align}
\|x^{k+1}-y^{k}\|^{2} & =  \|x^{k+1}-x^{k}+x^{k}-y^{k}\|^{2} \nonumber\\
 & \ge  \|x^{k+1}-x^{k}\|^{2}+\beta^{2}\|x^{k}-x^{k-1}\|^{2}-\beta\|x^{k+1}-x^{k}\|^{2}-\beta\|x^{k}-x^{k-1}\|^{2}\nonumber\\
 & = \theta\|x^{k+1}-x^{k}\|^{2}-\beta\theta\|x^{k}-x^{k-1}\|^{2} \label{proof13}
\end{align}
and that 
\begin{equation}
  \frac{\rho\theta}{2}\|x^{k+1}-z^{k}\|^{2}\leq\rho\theta\|x^{k+1}-x^{k}\|^{2}+\rho\beta^{2}\theta^{-1}\|x^{k}-x^{k-1}\|^{2}.  \label{proof14}
\end{equation}
Combining \eqref{proof13}, \eqref{proof14} with \eqref{proof12} and taking expectation, we have 
\begin{align*}
& \frac{\gamma\theta}{2}\mathbb{E}_{k}[\|x^{k+1}-x^{k}\|^{2}]-\frac{\gamma\beta\theta}{2}\|x^{k}-x^{k-1}\|^{2}\\
\leq{} & (1-\theta)\{\psi(x^{k})-\mathbb{E}_{k}[\psi(x^{k+1})]\}+\frac{2L_f^{2}}{\gamma-\kappa}\\
& +(\rho+\kappa\beta)\theta\mathbb{E}_{k}[\|x^{k+1}-x^{k}\|^{2}]+\rho\beta^{2}\theta^{-1}\|x^{k}-x^{k-1}\|^{2}-\frac{\theta(\rho-3\lambda-2\kappa\beta-\kappa)}{2}\mathbb{E}_{k}[\|x^{k+1}-\hat{z}^{k}\|^{2}]\\
& +\frac{\gamma\theta^{2}-\rho\theta}{2}\|\hat{z}^{k}-z^{k}\|^{2}-\frac{(\gamma-\kappa)\theta^{2}}{2}\mathbb{E}_{k}[\|\hat{z}^{k}-z^{k+1}\|^{2}]+\mathbb{E}_{k}[\varepsilon_{k}].
\end{align*}
After re-arrangement, we arrive at 
\begin{align}
& \frac{(\gamma-\kappa)\theta^{2}}{2}\mathbb{E}_{k}[\|\hat{z}^{k}-z^{k+1}\|^{2}] \nonumber\\
 \leq{} & (1-\theta)\{\psi(x^{k})-\mathbb{E}_{k}[\psi(x^{k+1})]\}+\frac{\gamma\theta^{2}-\rho\theta}{2}\|\hat{z}^{k}-z^{k}\|^{2}\nonumber\\
 & +\big(\rho\beta^{2}\theta^{-1}+\frac{\gamma\beta\theta}{2}\big)\{\|x^{k}-x^{k-1}\|^{2}-\mathbb{E}_{k}[\|x^{k+1}-x^{k}\|^{2}]\}-\frac{\theta(\rho-3\lambda-2\kappa\beta-\kappa)}{2}\mathbb{E}_{k}[\|x^{k+1}-\hat{z}^{k}\|^{2}]\nonumber\\
 & +\frac{2L_f^{2}}{\gamma-\kappa}-\frac{\gamma\theta^{2}-2\theta(\rho+\kappa\beta)-2\rho\beta^{2}\theta^{-1}}{2}\mathbb{E}_{k}[\|x^{k+1}-x^{k}\|^{2}]+\mathbb{E}_{k}[\varepsilon_{k}].
 \label{proof15}
\end{align}
Dividing both sides of \eqref{proof15} by $\frac{(\gamma-\kappa)\theta^{2}}{2}$, we obtain that
\begin{align}
 & \mathbb{E}_{k}[\|\hat{z}^{k}-z^{k+1}\|^{2}]\nonumber\\
 \leq{} & \frac{2(1-\theta)}{(\gamma-\kappa)\theta^{2}}\{\psi(x^{k})-\mathbb{E}_{k}[\psi(x^{k+1})]\}+\frac{\gamma\theta-\rho}{(\gamma-\kappa)\theta}\|\hat{z}^{k}-z^{k}\|^{2}\nonumber\\
 & +\frac{2\rho\beta^{2}\theta^{-1}+\gamma\beta\theta}{(\gamma-\kappa)\theta^{2}}\{\|x^{k}-x^{k-1}\|^{2}-\mathbb{E}_{k}[\|x^{k+1}-x^{k}\|^{2}]\}-\frac{\rho-3\lambda-2\kappa\beta-\kappa}{(\gamma-\kappa)\theta}\mathbb{E}_{k}[\|x^{k+1}-\hat{z}^{k}\|^{2}]\nonumber\\
 & +\frac{4L_f^{2}}{(\gamma-\kappa)^{2}\theta^{2}}-\frac{\gamma\theta^{2}-2\theta(\rho+\kappa\beta)-2\rho\beta^{2}\theta^{-1}}{(\gamma-\kappa)\theta^{2}}\mathbb{E}_{k}[\|x^{k+1}-x^{k}\|^{2}]+\frac{2\mathbb{E}_{k}[\varepsilon_{k}]}{(\gamma-\kappa)\theta^{2}}\nonumber\\
 ={} & \|\hat{z}^{k}-z^{k}\|^{2}-\frac{\rho-\kappa\theta}{(\gamma-\kappa)\theta}\|\hat{z}^{k}-z^{k}\|^{2}+\frac{2\beta}{(\gamma-\kappa)\theta}\{\psi(x^{k})-\mathbb{E}_{k}[\psi(x^{k+1})]\}\nonumber\\
 & +\frac{2\rho\beta^{2}\theta^{-1}+\gamma\beta\theta}{(\gamma-\kappa)\theta^{2}}\{\|x^{k}-x^{k-1}\|^{2}-\mathbb{E}_{k}[\|x^{k+1}-x^{k}\|^{2}]\}-\frac{\rho-3\lambda-2\kappa\beta-\kappa}{(\gamma-\kappa)\theta}\mathbb{E}_{k}[\|x^{k+1}-\hat{z}^{k}\|^{2}]\nonumber\\
 & +\frac{4L_f^{2}}{(\gamma-\kappa)^{2}\theta^{2}}-\frac{\gamma\theta^{2}-2\theta(\rho+\kappa\beta)-2\rho\beta^{2}\theta^{-1}}{(\gamma-\kappa)\theta^{2}}\mathbb{E}_{k}[\|x^{k+1}-x^{k}\|^{2}]+\frac{2\mathbb{E}_{k}[\varepsilon_{k}]}{(\gamma-\kappa)\theta^{2}}. \label{proof:16}
\end{align}
Next, we consider the Moreau envelope and 
\begin{align}
& \mathbb{E}_{k}[\psi_{1/\rho}(z^{k+1})]\nonumber\\
 ={} & \mathbb{E}_{k}\left[\psi(\hat{z}^{k+1})+\frac{\rho}{2}\|\hat{z}^{k+1}-z^{k+1}\|^{2}\right]\nonumber\\
\leq{} & \mathbb{E}_{k}\left[f(\hat{z}^{k})+\omega(\hat{z}^{k})+\frac{\rho}{2}\|\hat{z}^{k}-z^{k+1}\|^{2}\right]\nonumber\\
\leq{} & f(\hat{z}^{k})+\omega(\hat{z}^{k})+\frac{\rho}{2}\|\hat{z}^{k}-z^{k}\|^{2}+\frac{\rho\beta}{(\gamma-\kappa)\theta^{2}}\{\psi(x^{k})-\mathbb{E}_{k}[\psi(x^{k+1})]\}\nonumber\\
& -\frac{\rho(\rho-\kappa\theta)}{2(\gamma-\kappa)\theta}\|\hat{z}^{k}-z^{k}\|^{2}+\frac{\rho(2\rho\beta^{2}\theta^{-1}+\gamma\beta\theta)}{2(\gamma-\kappa)\theta^{2}}\{\|x^{k}-x^{k-1}\|^{2}-\mathbb{E}_{k}[\|x^{k+1}-x^{k}\|^{2}]\}\nonumber\\
& +\frac{2\rho L_f^{2}}{(\gamma-\kappa)^{2}\theta^{2}}-\frac{\rho(\gamma\theta^{2}-2\theta(\rho+\kappa\beta)-2\rho\beta^{2}\theta^{-1})}{2(\gamma-\kappa)\theta^{2}}\mathbb{E}_{k}[[\|x^{k+1}-x^{k}\|^{2}]+\frac{\rho\mathbb{E}_{k}[\varepsilon_{k}]}{(\gamma-\kappa)\theta^{2}}\label{proof:17} \\
={} & \psi_{1/\rho}(z^{k})+\frac{\rho\beta}{(\gamma-\kappa)\theta^{2}}\{\psi(x^{k})-\mathbb{E}_{k}[\psi(x^{k+1})]\}\nonumber  \\ 
& -\frac{\rho(\rho-\kappa\theta)}{2(\gamma-\kappa)\theta}\|\hat{z}^{k}-z^{k}\|^{2}+\frac{\rho(2\rho\beta^{2}\theta^{-1}+\gamma\beta\theta)}{2(\gamma-\kappa)\theta^{2}}\{\|x^{k}-x^{k-1}\|^{2}-\mathbb{E}_{k}[\|x^{k+1}-x^{k}\|^{2}]\}\nonumber\\
& +\frac{2\rho L_f^{2}}{(\gamma-\kappa)^{2}\theta^{2}}-\frac{\rho(\gamma\theta^{2}-2\theta(\rho+\kappa\beta)-2\rho\beta^{2}\theta^{-1})}{2(\gamma-\kappa)\theta^{2}}\mathbb{E}_{k}[\|x^{k+1}-x^{k}\|^{2}]+\frac{\rho\mathbb{E}_{k}[\varepsilon_{k}]}{(\gamma-\kappa)\theta^{2}}\label{proof:18},
\end{align}
where \eqref{proof:17} uses the relation from \eqref{proof:16}.
Last we re-arrangement the terms in \eqref{proof:18} to get 
\begin{align*}
& \frac{\rho(\rho-\kappa\theta)}{2(\gamma-\kappa)\theta}\|\hat{z}^{k}-z^{k}\|^{2}\\
\leq{} & \psi_{1/\rho}(z^{k})-\mathbb{E}_{k}[\psi_{1/\rho}(z^{k+1})]+\frac{\rho\beta}{(\gamma-\kappa)\theta^{2}}\{\psi(x^{k})-\mathbb{E}_{k}[\psi(x^{k+1})]\}\\
& +\frac{\rho(2\rho\beta^{2}\theta^{-1}+\gamma\beta\theta)}{2(\gamma-\kappa)\theta^{2}}\{\|x^{k}-x^{k-1}\|^{2}-\mathbb{E}_{k}[\|x^{k+1}-x^{k}\|^{2}]\}\\
& +\frac{2\rho L_f^{2}}{(\gamma-\kappa)^{2}\theta^{2}}-\frac{\rho(\gamma\theta^{2}-2\theta(\rho+\kappa\beta)-2\rho\beta^{2}\theta^{-1})}{2(\gamma-\kappa)\theta^{2}}\mathbb{E}_{k}[\|x^{k+1}-x^{k}\|^{2}]+\frac{\rho\mathbb{E}_{k}[\varepsilon_{k}]}{(\gamma-\kappa)\theta^{2}}.
\end{align*}
Recalling that $\|\nabla\psi_{1/\rho}(\hat{z}^{k})\|^{2}=\rho^{2}\|\hat{z}^{k}-z^{k}\|^{2}$,
we complete the proof.


\textbf{\underline{Proof of Lemma \ref{dsepl-conv}}}

By \textbf{Lemma~\ref{dsepl-descent}} we have that 
\begin{align}
 & \frac{\rho-\kappa\theta}{2\rho(\gamma-\kappa)\theta}\|\hat{z}^{k}-z^{k}\|^{2}\nonumber\\
 \leq{} & \psi_{1/\rho}(z^{k})-\mathbb{E}_{k}[\psi_{1/\rho}(z^{k+1})]+\frac{\rho\beta}{(\gamma-\kappa)\theta^{2}}\{\psi(x^{k})-\mathbb{E}_{k}[\psi(x^{k+1})]\}\nonumber\\
  & +\frac{\rho(2\rho\beta^{2}\theta^{-1}+\gamma\beta\theta)}{2(\gamma-\kappa)\theta^{2}}\{\|x^{k}-x^{k-1}\|^{2}-\mathbb{E}_{k}[\|x^{k+1}-x^{k}\|^{2}]\}\label{proof:19}\\ 
 & +\frac{2\rho L_f^{2}}{(\gamma-\kappa)^{2}\theta^{2}}-\frac{\rho(\gamma\theta^{2}-2\theta(\rho+\kappa\beta)-2\rho\beta^{2}\theta^{-1})}{2(\gamma-\kappa)\theta^{2}}\mathbb{E}_{k}[\|x^{k+1}-x^{k}\|^{2}]+\frac{\rho\mathbb{E}_{k}[\varepsilon_{k}]}{(\gamma-\kappa)\theta^{2}}. \nonumber
\end{align}
Summing up the inequality~\eqref{proof:19} from $k=1$ to $K$, multiplying both sides by $\gamma + \kappa > 0$ and taking expectation,
we have 
\begin{align}
 & \frac{\rho-\kappa\theta}{2\rho\theta}\mathbb{E}[\|\nabla\psi_{1/\rho}(z^{k^{\ast}})\|^{2}]\nonumber \\
 &\quad \leq \frac{\gamma-\kappa}{K}\{\psi_{1/\rho}(z^{1})-\mathbb{E}_{k}[\psi_{1/\rho}(z^{K+1})]\}+\frac{\rho\beta}{\theta^{2}K}\{\psi(x^{k})-\mathbb{E}_{k}[\psi(x^{k+1})]\}\nonumber \\
 &\qquad   +\frac{\rho(2\rho\beta^{2}\theta^{-1}+\gamma\beta\theta)}{2\theta^{2}K}\|x^{1}-x^{0}\|^{2}+\frac{2\rho L_f^{2}K}{(\gamma-\kappa)\theta^{2}}+\frac{\rho}{\theta^{2}K}\sum_{k=1}^{K}\mathbb{E}[\varepsilon_{k}]\nonumber \\
 & \qquad  -\frac{\rho(\gamma\theta^{2}-2\theta(\rho+\kappa\beta)-2\rho\beta^{2}\theta^{-1})}{2\theta^{2}K}\sum_{k=1}^{K}\mathbb{E}[\|x^{k+1}-x^{k}\|^{2}]\nonumber \\
 & \quad\leq  \frac{(\gamma-\kappa+\rho\beta\theta^{-2})D}{K}+\frac{2\rho L_f^{2}}{(\gamma-\kappa)\theta^{2}}+\frac{\rho}{\theta^{2}K}\sum_{k=1}^{K}\mathbb{E}[\varepsilon_{k}]\nonumber \\
 & \qquad  -\frac{\rho(\gamma\theta^{2}-2\theta(\rho+\kappa\beta)-2\rho\beta^{2}\theta^{-1})}{2\theta^{2}K}\sum_{k=1}^{K}\mathbb{E}[\|x^{k+1}-x^{k}\|^{2}]\label{eq:dsepl-conv:1}
\end{align}
where the last inequality uses $x^{0}=x^{1}$ and $D=\max\{\psi_{1/\rho}(z^{1})-\psi_{1/\rho}(z^{\ast}),\psi(x^{1})-\psi(x^{\ast})\}$.
Now we divide both sides of the inequality by $\frac{\rho-\kappa\theta}{2\rho\theta}$
to get 
\[
\mathbb{E}[\|\nabla\psi_{1/\rho}(z^{k^{\ast}})\|^{2}] \leq \frac{2\rho\theta}{\rho-\kappa\theta}\left[\frac{(\gamma-\kappa+\rho\beta\theta^{-2})D}{K}+\frac{2\rho L_f^{2}}{(\gamma-\kappa)\theta^{2}}+\frac{\rho}{\theta^{2}K}\sum_{k=1}^{K}\mathbb{E}_{k}[\varepsilon_{k}] \right].
\]
Last we  bound $\sum_{k=1}^{K}\mathbb{E}[\varepsilon_{k}]$. First, we have the following relations
\[
\sum_{k=1}^{K}\|x^{k+1}-x^{k-\tau_{k}}\|^{2} \leq \tau^{2}\sum_{k=1}^{K}\|x^{k}-x^{k-1}\|^{2} \leq  \tau^{2}\sum_{k=1}^{K}\|x^{k+1}-x^{k}\|^{2},
\]
where the first inequality is by \ref{E5} and the second
inequality uses the fact that $x^{0}=x^{1}$. Similarly 
\begin{align*}
\sum_{k=1}^{K}\|x^{k+1}-x^{k-\tau_{k}}\|^{2} & \leq  \sum_{k=1}^{K}2[\|x^{k}-x^{k-\tau_{k}}\|^{2}+\|x^{k+1}-x^{k}\|^{2}]\\
 & \leq  2(\tau^{2}+1)\sum_{k=1}^{K}\|x^{k+1}-x^{k}\|^{2},
\end{align*}
where the first inequality uses $\|a+b\|^{2}\leq2\|a\|^{2}+2\|b\|^{2}$
and the second inequality re-uses the bound of the first term. Plugging the above bounds back into  $\sum_{k=1}^{K}\mathbb{E}[\varepsilon_{k}]$, we successively deduce that
\begin{align*}
\sum_{k=1}^{K}\mathbb{E}[\varepsilon_{k}] & = \sum_{k=1}^{K}\frac{\lambda(1-\theta)}{2}\|x^{k}-x^{k-\tau_{k}}\|^{2}+\sum_{k=1}^{K}(\theta+1/2)\lambda\mathbb{E}[\|x^{k+1}-x^{k-\tau_{k}}\|^{2}]\\
 & \leq \lambda[(1-\theta)(\tau^{2}+1)+(\theta+1/2)\tau^{2}]\sum_{k=1}^{K}\mathbb{E}[\|x^{k+1}-x^{k}\|^{2}]\\
 & = \lambda(3\tau^{2}/2+\beta)\sum_{k=1}^{K}\mathbb{E}[\|x^{k+1}-x^{k}\|^{2}],
\end{align*}
which implies 
\begin{align}
& \mathbb{E}[\|\nabla\psi_{1/\rho}(z^{k^{\ast}})\|^{2}]\nonumber\\
\leq{} & \frac{2\rho\theta}{\rho-\kappa\theta}\left[\frac{(\gamma-\kappa+\rho\beta\theta^{-2})D}{K}+\frac{2\rho L_f^{2}}{(\gamma-\kappa)\theta^{2}}+\frac{\rho\lambda(3\tau^{2}+2\beta)}{2\theta^{2}K}\sum_{k=1}^{K}\mathbb{E}[\|x^{k+1}-x^{k}\|^{2}]\right], \label{proof:new1}
\end{align}

and this completes the proof.


\textbf{\underline{Proof of Theorem \ref{thm2}}}

By \textbf{Lemma \ref{dsepl-conv}}, it remains to bound the quantity $\sum_{k=1}^{K}\mathbb{E}[\|x^{k+1}-x^{k}\|^{2}]$.
To this end we consider the relation in \eqref{eq:dsepl-conv:1}, where we have
\begin{align*}
  & \frac{\rho(\gamma\theta^{2}-2\theta(\rho+\kappa\beta)-2\rho\beta^{2}\theta^{-1}-3\lambda\tau^{2}-2\lambda\beta)}{2\theta^{2}K}\sum_{k=1}^{K}\mathbb{E}[\|x^{k+1}-x^{k}\|^{2}]\\
 & \quad\leq \frac{(\gamma-\kappa+\rho\beta\theta^{-2})D}{K}+\frac{2\rho L_f^{2}}{(\gamma-\kappa)\theta^{2}}.
\end{align*}
Since we choose $\beta,\gamma$ such that $\gamma\theta^{2}-2\theta(\rho+\kappa\beta)-2\rho\beta^{2}\theta^{-1}-3\lambda\tau^{2}-2\lambda\beta>0$,
then 
\[
\frac{\rho\lambda(3\tau^{2}+2\beta)}{2\theta^{2}K}\sum_{k=1}^{K}\mathbb{E}[\|x^{k+1}-x^{k}\|^{2}]\leq\frac{\lambda(3\tau^{2}+2\beta)\left\{ \frac{(\gamma-\kappa+\rho\beta\theta^{-2})D}{K}+\frac{2\rho L_f^{2}}{(\gamma-\kappa)\theta^{2}}\right\} }{\gamma\theta^{2}-2\theta(\rho+\kappa\beta)-2\rho\beta^{2}\theta^{-1}-3\lambda\tau^{2}-2\lambda\beta}.
\]
Plugging the bound back, we have 
\begin{align*}
  & \mathbb{E}[\|\nabla\psi_{1/\rho}(z^{k^{\ast}})\|^{2}]\\
 \leq{}&  \frac{2\rho\theta}{\rho-\kappa\theta}\Bigg[\frac{(\gamma-\kappa+\rho\beta\theta^{-2})D}{K}+\frac{2\rho L_f^{2}}{(\gamma-\kappa)\theta^{2}}+\frac{\lambda(3\tau^{2}+2\beta)\left\{ \frac{(\gamma-\kappa+\rho\beta\theta^{-2})D}{K}+\frac{2\rho L_f^{2}}{(\gamma-\kappa)\theta^{2}}\right\} }{\gamma\theta^{2}-2\theta(\rho+\kappa\beta)-2\rho\beta^{2}\theta^{-1}-3\lambda\tau^{2}-2\lambda\beta}\Bigg].
\end{align*}
This completes our proof.


\section{Kernel and subproblems} \label{app:subproblem}

In this section, we delve into the practical considerations related to Bregman proximal methods.   In particular, we discuss how to construct suitable divergence kernels for {\dspl} so that we can accommodate non-Lipschitzness of the objective function. We also propose efficient subroutines for solving Bregman proximal subproblems, which are applicable to a wide range of loss functions typically encountered in machine learning tasks. 

\subsection{Choosing the divergence kernel}
The selection of the kernel function is a critical aspect of the Bregman proximal method.
The choice of kernel has been widely discussed in the literature {\citeapp{davis2018stochasticdup, lu2019relativedup}}, and we incorporate these findings to construct suitable kernels for the {\dspl} method.
Given the assumptions we have made throughout the paper, it suffices to consider kernels $d(x) = \mathcal{P}_n^p(\|x\|) \assign \sum_{k = 0}^n p_k
  \|x\|^k $, which  are degree-$n$ polynomials in $\|x\|$.
We recall the following lemma and refer the readers to {\citeapp{davis2018stochasticdup}} for its proof.
\begin{lem}\label{lem:kernel}
Given $f(x)$ such that  $\frac{f(x) - f(y)}{\|x - y\|} \leq \mathcal{P}^p_n(\|x\|) + \mathcal{P}^p_n(\|y\||) = \sum_{k = 0}^n p_k
  (\|x\|^k + \|y\|^2), p \geq 0, \forall x\in \dom h$, then $f$ satisfies $1$-relative Lipschitzian property with kernel $d(x) = \sum_{k = 0}^n  \frac{3k+7}{k+2} p_k \|x\|^{k+2}$. 
  \end{lem}
The above lemma implies that once we establish a polynomial upper bound on the Lipschitzness of $f_z(x, \xi) + \omega(x)$, a kernel $d$ is immediately available. 

  We note that the above kernel covers most of the applications \citeapp{davis2022graphicaldup} of the prox-linear method, including phase retrieval, blind deconvolution, matrix
completion, covariance estimation, robust PCA, and so on. 
In many machine learning applications, $c$ has Lipschitz continuous gradient, meaning that $\nabla c$ is bounded by first-order growth. For example, in the phase retrieval problem, we have $ \omega = 0, h(x) = |x|, c(x, \xi) = \langle a, x \rangle^2 - b $. Hence, we get
\[\|\nabla c(z, \xi)\| = 2| \langle a, z \rangle | \cdot \|a\|\leq 2\|a\|^2 \|z\| \ \text{and }\ 
  \frac{f_z(x, \xi) - f_z(y, \xi)}{\|x - y\|} \leq 2\|a\|^2 \|z\|. \]
Now assume that $z=x$, then for any  $p_0 \geq 0 $ and $p_1 = 2 \alpha^2, \alpha = \sup_{a \sim \Xi} \|a\|$ we know that 
$2\|a\|^2 \|x\| \leq p_0 + p_1 \|x\|$. Hence, we can take $d(x) = \tfrac{7}{2}\|x\|^2 + \ \tfrac{10\alpha^2}{3} \|x\|^3$ as our kernel. Recall that in {\dspl} $z = x^{k - \tau_k}, x=x^k$,  as long as there exist some $M, N$ such that $\|x^j\| \leq M \|x^k\| + N, k\geq j$ for all $k$, our assumptions are satisfied.

Now that we have specified ways to construct kernels for {\dspl}, we discuss how to solve the Bregman proximal subproblems in the next section. The subproblems' solution determines the practical applicability of our methods, and we show that these subproblems can be efficiently solved for a wide range of problems.

\subsection{Bregman proximal subproblem}

In this section, we discuss the solution of the proximal subproblems for
{\dspl} when $\omega = 0$, where the Bregman proximal
subproblem can then be expressed as
\begin{eqnarray}
  \min_x & h (\langle a, x \rangle + b) + V_d (x, y). & 
\end{eqnarray}
Consider a piece-wise linear $h (x) = \max \{ \alpha_1 x + \beta_1,
\alpha_2 x + \beta_2 \}$. This formulation characterizes common nonsmooth loss
functions including $\ell_1$ loss $| x | = \max \{ x, - x \}$ and hinge loss
$\max \{ 0, x \}$. Substituting $h$ in and removing terms irrelevant to $x$, we
arrive at the following {\dspl} subproblem.
\begin{eqnarray}
  \min_x & \max \{ \langle a_1, x \rangle + b_1, \langle a_2, x \rangle + b_2
  \} + d (x) & 
\end{eqnarray}
The next proposition provides a solution to the above subproblem.

\begin{prop}[Solving the proximal subproblem]\label{prop:subprob}

The proximal subproblem can be solved by evaluating solutions to the
following three problems
\begin{align*}
  \min_x~ & \langle a_1, x \rangle + d (x), \\
  \min_x~ & \langle a_2, x \rangle + d (x),
\end{align*}
and
\begin{eqnarray*}
  \min_x & \langle a_1, x \rangle + d (x) & \\
  \begin{rm}
  	{subject ~to}
  \end{rm}
 & \langle a_1 - a_2, x \rangle + b_1 - b_2 = 0, & 
\end{eqnarray*}
where the solution to the third problem satisfies $x = u + \alpha v$, where $u
= - \tfrac{(a_1 - a_2) (b_1 - b_2)}{\| a_1 - a_2 \|^2}$, $v = \tfrac{(\| a_1
\|^2 - \| a_2 \|^2)}{2 \| a_1 - a_2 \|^2} (a_1 - a_2) - \tfrac{a_1 + a_2}{2}$
and $\alpha$ is a positive root of the following equation
\[ \sum_{k = 0}^n p_k \alpha \| u + \alpha v \|^{k} - 1 = 0. \]	
\end{prop}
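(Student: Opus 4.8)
The plan is to analyze the subproblem $\min_x G(x)$ with $G(x)=\max\{l_1(x),l_2(x)\}+d(x)$, where $l_i(x)=\langle a_i,x\rangle+b_i$ are affine and $d(x)=\sum_{k=0}^n p_k\|x\|^k$ is the (strictly convex, coercive) polynomial kernel. Since $G$ is a sum of a pointwise maximum of affine functions and a strictly convex coercive function, it is strictly convex and coercive, hence has a unique minimizer $x^\ast$ characterized by $0\in\partial G(x^\ast)$. The whole argument rests on splitting according to which affine piece is active. Writing $T=\{x:l_1(x)=l_2(x)\}$, one has $\partial G(x)=\{a_1+\nabla d(x)\}$ on $\{l_1>l_2\}$, $\partial G(x)=\{a_2+\nabla d(x)\}$ on $\{l_1<l_2\}$, and $\partial G(x)=\{\theta a_1+(1-\theta)a_2+\nabla d(x):\theta\in[0,1]\}$ on $T$. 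This yields exactly three candidate optimality systems, matching the three listed problems.

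First I would dispatch the two unconstrained cases. If $l_1(x^\ast)>l_2(x^\ast)$, stationarity reads $a_1+\nabla d(x^\ast)=0$, i.e. $x^\ast=\arg\min_x\langle a_1,x\rangle+d(x)$ (the first listed problem, the constant $b_1$ being irrelevant to the $\arg\min$); symmetrically for the second problem when $l_2>l_1$. Thus each of the first two problems produces a candidate that is the true minimizer precisely when its solution lands in the corresponding region, which one checks by a single inequality on the sign of $l_1-l_2$.

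The crux is the tie case $x^\ast\in T$, handled by the third, equality-constrained problem over the hyperplane $H=\{x:\langle a_1-a_2,x\rangle+b_1-b_2=0\}$. On $H$ both pieces coincide and equal $\langle\bar a,x\rangle+\bar b$ with $\bar a=\tfrac{a_1+a_2}{2}$, so minimizing $\langle a_1,x\rangle+d(x)$ on $H$ is, up to an additive constant, the same as minimizing $F(x):=\langle\bar a,x\rangle+d(x)$ on $H$. I would first verify the geometry of the claimed parametrization: a direct computation gives $\langle a_1-a_2,u\rangle+b_1-b_2=0$, so $u\in H$ and $u\parallel(a_1-a_2)$, while $\langle a_1-a_2,v\rangle=0$, so the whole line $\{u+\alpha v\}$ lies in $H$; moreover $v=-P\bar a$, where $P$ is the orthogonal projector onto $(a_1-a_2)^\perp$, the tangent space of $H$. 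The KKT condition for the constrained problem is that $\nabla F(x)=\bar a+\nabla d(x)$ be parallel to $a_1-a_2$, equivalently $P\bar a+P\nabla d(x)=0$. Here I would invoke the radial structure of the kernel: since $d$ depends on $x$ only through $\|x\|$, its gradient is $\nabla d(x)=\psi(\|x\|)\,x$ for a scalar profile $\psi$ induced by $\{p_k\}$, whence $P\nabla d(x)=\psi(\|x\|)Px=\psi(\|x\|)\,\alpha v$ using $Pu=0$ and $Pv=v$. The stationarity equation then collapses to $\alpha\,\psi(\|u+\alpha v\|)\,v=-P\bar a=v$, i.e. the scalar relation $\alpha\,\psi(\|u+\alpha v\|)=1$; substituting the explicit polynomial form of $d$ produces the displayed polynomial equation in $\alpha$. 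Existence of a positive root follows from the intermediate value theorem — the left-hand side tends to $0$ as $\alpha\to0^+$ and to $+\infty$ as $\alpha\to+\infty$ by coercivity of $d$ — while strict monotonicity, guaranteed by nonnegativity of the $p_k$ and strict convexity of $d$, gives uniqueness, with $\alpha>0$ forced by $\alpha=1/\psi>0$.

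Finally I would assemble the three candidates: the global minimizer of the subproblem is the unique one, among the at most three stationary points above, that is consistent with its defining region and attains the least value of $G$. I expect the main obstacle to be the tie case — specifically, the observation that the constrained minimizer must lie on the single ray $u+\alpha v$. This is exactly where the radial form $\nabla d(x)=\psi(\|x\|)x$ is essential: it forces the tangential component of $\nabla d$ to be parallel to $Px$, collapsing an $(n-1)$-dimensional constrained search to one scalar equation. A secondary but necessary technical point is the careful justification of existence, uniqueness, and the correct sign of the root $\alpha$, and the verification that comparing the three candidates indeed recovers the global optimum.
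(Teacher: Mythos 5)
Your proof is correct and follows essentially the same route as the paper: the same three-way case split on which affine piece is active at the unique minimizer, and, in the tie case, the same exploitation of the radial structure $\nabla d(x)=\psi(\|x\|)\,x$ to collapse the constrained first-order condition onto the ray $x=u+\alpha v$ and reduce it to the scalar equation $\alpha\,\psi(\|u+\alpha v\|)=1$. The only differences are cosmetic or additive: you eliminate the multiplier by projecting onto $(a_1-a_2)^{\perp}$ (with the clean identities $Pu=0$, $Pv=v$, $v=-P\bar a$) where the paper takes an inner product with $a_1-a_2$ to solve for $\lambda$ explicitly, and you additionally supply the intermediate-value/monotonicity argument for existence, uniqueness, and positivity of the root $\alpha$, which the paper leaves implicit.
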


\begin{proof}
Recall that we are solving the following convex optimization problem
\begin{eqnarray*}
  \min_x & \max \{ \langle a_1, x \rangle + b_1, \langle a_2, x \rangle + b_2
  \} + d (x), & 
\end{eqnarray*}
where $d$ is strongly convex and thus the problem admits a unique minimizer
$x^{\ast}$. Then we do case analysis. \\

{\tmstrong{Case 1}}. $\langle a_1, x^{\ast} \rangle + b_1 > \langle a_2,
x^{\ast} \rangle + b_2$.  In this case solving $\min_x \langle a_1, x \rangle + d (x)$ produces
$x^{\ast}$.

{\tmstrong{Case 2}}. $\langle a_1, x^{\ast} \rangle + b_1 < \langle a_2,
x^{\ast} \rangle + b_2$. In this case solving $\min_x \langle a_2, x \rangle + d (x)$ produces
$x^{\ast}$. 

The above two cases degenerate into the subproblems from stochastic mirror descent and its solution has been discussed in \citeapp{lu2019relativedup}. The last case is a bit more tricky. 

{\tmstrong{Case 3}}. $\langle a_1, x^{\ast} \rangle + b_1 = \langle a_2,
x^{\ast} \rangle + b_2$.

In this case, the proximal subproblem becomes an equality constrained problem.
\begin{eqnarray*}
  \min_x & \langle a_1, x \rangle + d (x) & \\
  \text{subject to} & \langle a_1 - a_2, x \rangle + b_1 - b_2 = 0 & 
\end{eqnarray*}
Letting $\lambda$ be the multiplier of the equality constraint and appealing
to the optimality condition, we have
\begin{align}
  a_1 + \nabla d (x) + \lambda (a_1 - a_2) & = 0 \label{subproblem-eq:1}\\
  a_2 + \nabla d (x) + \lambda (a_1 - a_2) & = 0 \label{subproblem-eq:2}\\
  \langle a_1 - a_2, x \rangle + b_1 - b_2 & = 0. \label{subproblem-eq:3} 
\end{align}
Summing \eqref{subproblem-eq:1} \eqref{subproblem-eq:2} and dividing both sides by 2, we have
\begin{eqnarray}
  \tfrac{a_1 + a_2}{2} + \nabla d (x) + \lambda (a_1 - a_2) = 0. \label{subproblem-eq:4}
\end{eqnarray}
Multiplying both sides of \eqref{subproblem-eq:4} by $a_1 - a_2$, we arrive at
\begin{eqnarray}
  \tfrac{\| a_1 \|^2 - \| a_2 \|^2}{2} + \langle a_1 - a_2, \nabla d (x)
  \rangle + \lambda \| a_1 - a_2 \|^2 = 0. \label{subproblem-eq:5} 
\end{eqnarray}
Since $d (x)$ is a polynomial in $\| x \|$, $\nabla d (x) = \zeta x, \zeta >
0$ and using \eqref{subproblem-eq:3}, we have
\begin{eqnarray}
  \lambda = \tfrac{b_1 - b_2}{\| a_1 - a_2 \|^2} \zeta - \tfrac{\| a_1 \|^2
  - \| a_2 \|^2}{2 \| a_1 - a_2 \|^2} . \label{subproblem-eq:6} 
\end{eqnarray}
Then substituting $\lambda$ into \eqref{subproblem-eq:5},
\begin{align*}
  0 & = \tfrac{a_1 + a_2}{2} + \nabla d (x) + \lambda (a_1 - a_2)\\
  & = \tfrac{a_1 + a_2}{2} + \zeta x + \zeta \tfrac{(a_1 - a_2) (b_1 -
  b_2)}{\| a_1 - a_2 \|^2} - \tfrac{\| a_1 \|^2 - \| a_2 \|^2}{2 \| a_1 - a_2
  \|^2} (a_1 - a_2)\\
  & = \zeta \left[ x + \tfrac{(a_1 - a_2) (b_1 - b_2)}{\| a_1 - a_2 \|^2}
  \right] - \tfrac{\| a_1 \|^2 - \| a_2 \|^2}{2 \| a_1 - a_2 \|^2} (a_1 - a_2)
  + \tfrac{a_1 + a_2}{2}
\end{align*}
we get
\begin{eqnarray*}
  x = \zeta^{- 1} \left[ \tfrac{(\| a_1 \|^2 - \| a_2 \|^2)}{2 \| a_1 - a_2
  \|^2} (a_1 - a_2) - \tfrac{a_1 + a_2}{2} \right] - \tfrac{(a_1 - a_2) (b_1 -
  b_2)}{\| a_1 - a_2 \|^2}
\end{eqnarray*}
for some $\zeta > 0$. Without loss of generality, we express $x = u + \alpha v$
for $u = - \tfrac{(a_1 - a_2) (b_1 - b_2)}{\| a_1 - a_2 \|^2}$ and $v =
\tfrac{(\| a_1 \|^2 - \| a_2 \|^2)}{2 \| a_1 - a_2 \|^2} (a_1 - a_2) -
\tfrac{a_1 + a_2}{2}$ and $\alpha > 0$. Substituting back gives $\zeta = \sum_{k
= 0}^n p_k \| x \|^{k}$ and
\begin{align*}
  \| \zeta (x - u) - v \| & = \| \alpha \zeta v - v \|
  = | \alpha \zeta - 1 | \cdot \| v \|
  = 0.
\end{align*}
Therefore, we only need to verify the positive roots of $\sum_{k = 0}^n p_k \alpha \|
u + \alpha v \|^{k} - 1 = 0$, which can be done efficiently if $n$ is
not large. This completes the proof.
\end{proof}

\subsection{Euclidean subproblem with nonsmooth regularizer}

In this section, we discuss the solution of the proximal subproblems for {\dspl} when $d(x) = \frac{1}{2}\|x\|^2$ and $\omega$ takes on one of two forms:
\textbf{1)}. Indicator function of ball $\delta_{\{\|x\| \leq R\}}$. \textbf{2)}. $\ell_1$ regularizer $\|x\|_1$.
We still assume $h(x) = \max\{\alpha_1 x+\beta_1,\alpha_2 x+\beta_2\}$, which covers all the popular applications of {\spl} from \citeapp{davis2022graphicaldup, davis2019proximallydup}. Following the same argument as in the previous section, each {\dspl} iteration is
reduced to
\begin{eqnarray*}
  \min_x & \max \{ \langle a_1, x \rangle + b_1, \langle a_2, x \rangle + b_2
  \} + \omega (x) + \frac{1}{2} \| x - y \|^2 & 
\end{eqnarray*}
and after doing case analysis on \(\langle a_1, x^{\ast} \rangle + b_1\)
and \(\langle a_2, x^{\ast} \rangle + b_2\) as in \textbf{Proposition \ref{prop:subprob}}, it suffices to
solve
\begin{eqnarray*}
  \min_x &\quad \omega (x) + \frac{1}{2} \| x - y \|^2 & \\
  \text{subject to} & \langle a, x \rangle - b = 0 & 
\end{eqnarray*}
efficiently.
\paragraph{Case 1.} \(\omega(x)=\delta_{\{\|x\| \leq R\}}\). 
In this case, we need to solve
\begin{eqnarray*}
  \min_x & \frac{1}{2} \| x - y \|^2 & \\
  \text{subject to} & \langle a, x \rangle = b & \\
  & \| x \|^2 \leq r = R^2, & 
\end{eqnarray*}
whose KKT condition is given by
\begin{align}
  \langle a, x \rangle & = b \nonumber\\
  \lambda & \geq 0 \nonumber\\
  \lambda (\| x \|^2 - r) & = 0 \nonumber\\
  x - y + \nu a + 2 \lambda x & = 0 \nonumber
\end{align}
After simplification, we arrive at

\[x = \frac{y}{1 + 2 \lambda} - \frac{\langle a, y \rangle a - (1 + 2 \lambda) b
   a}{(1 + 2 \lambda) \| a \|^2}.\]

Then either \(\lambda = 0\) or the solution  can be obtained using bisection.

\paragraph{Case 2.}\(\omega(x) = \|x\|_1\). In this case, we need to solve

\begin{eqnarray*}
  \min_x & \quad \| x \|_1 + \frac{1}{2} \| x - y \|^2 & \\
  \text{subject to} & \langle a, x \rangle - b = 0 & 
\end{eqnarray*}

Given the Lagrangian multiplier $\nu$ associated with the equality constraint, we have that $x (\nu) = \mathcal{S} (y - \nu a)$, which is given by the soft-thresholding operator. The residual of the equality constraint is given by:
\[f (\nu) = \langle a, x \rangle - b = \langle a, x (\nu) \rangle - b.\]
This function is univariate in $\nu$ and is semi-smooth. Therefore, semi-smooth Newton method can efficiently find its root. This makes the computation tractable.

\section{Additional experiments} \label{app:exp}

In this section, we present additional experiments incorporating momentum. Our experiments focus on both robust phase retrieval and blind deconvolution problems.
\subsection{Robust phase retrieval}
We refer  readers to Section {\ref{sec:Experiments}} for the detailed formulation of phase retrieval. Most of the experiment settings in this section are consistent with Section {\ref{sec:Experiments}}.
\subsubsection{Experiment setup}
\begin{enumerate}[label=\textbf{\arabic*)}, ref=Ex\arabic*, leftmargin=*]
\item \textbf{Initial point and radius}. For the synthetic data, we generate $x'\sim\mathcal{N}(0,I_{ n})$
and start from $x^{0}=x^{1}=\frac{x'}{\|x'\|}$ and for \tmverbatim{\texttt{zipcode}}
data, we generate $x'\sim\mathcal{N}(\hat{x},I_{n})$ and take $x^{0}=x^{1}=x'$. $M = 1000\|x^0\|$.
\condsep
\item \textbf{Stepsize}. We tune the stepsize parameter setting $\gamma=\sqrt{K/\alpha}$,
where $\alpha\in\{0.1,0.5,1.0\}$ in the asynchronous environment,
$\alpha\in[10^{-2},10^{1}]$ for synthetic data in the simulated environment
and $\alpha\in[10^{0},10^{1}]$ for the \tmverbatim{\texttt{zipcode}} dataset.
\condsep
\item \textbf{Momentum parameter}. In the asynchronous environment, we
allow $\beta\in\{0,0.1,0.3,0.6\}$ and in the simulated environment,
we test $\beta\in\{0.6,0.9\}$ for the synthetic and \tmverbatim{\texttt{zipcode}}
data respectively.
\condsep
\item \textbf{Others}. Other settings are consistent with Section {\ref{sec:Experiments}}.
\condsep
\end{enumerate}
\condsep
\subsubsection{Asynchronous environment}
\begin{figure*}[h]
\centering{}\qquad{}\includegraphics[scale=0.21]{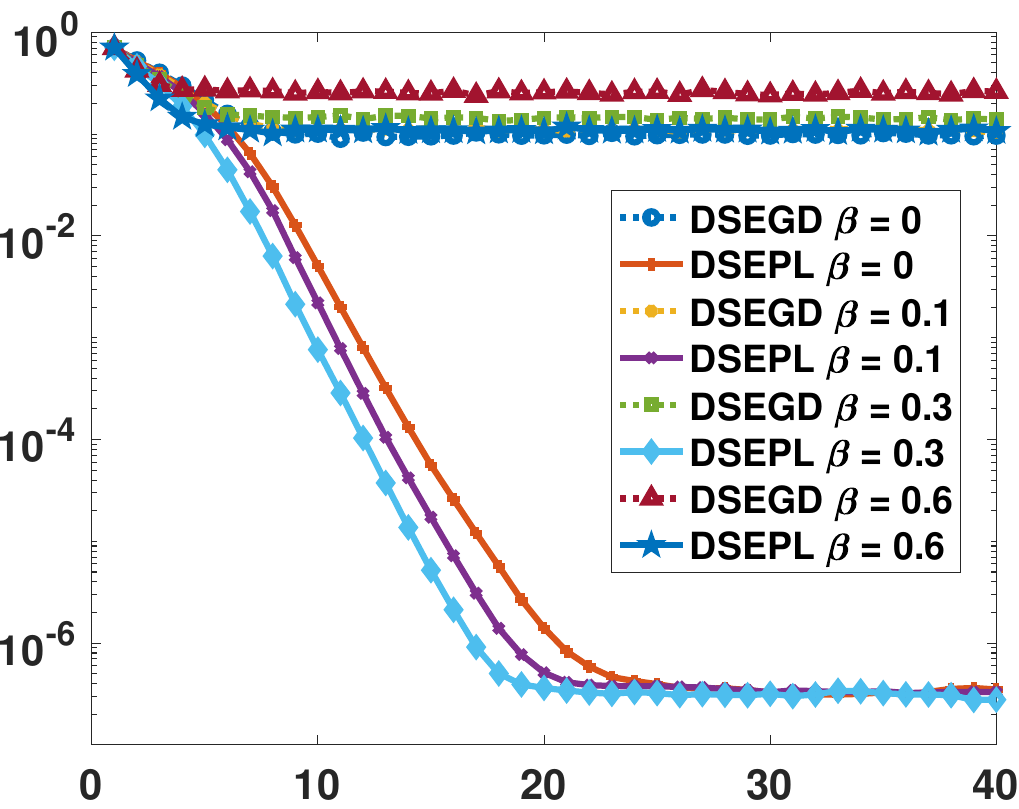}\qquad{}\includegraphics[scale=0.21]{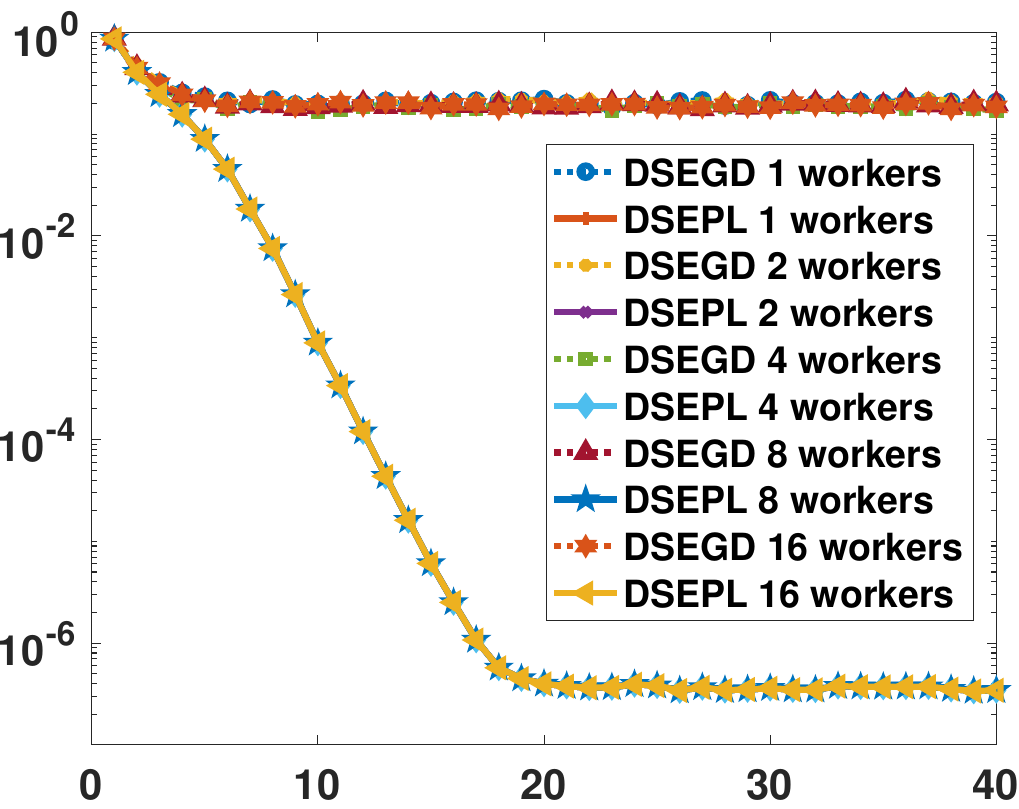}\caption{First: speedup in time and the number
of workers. Second: progress of $\|x^{k}-\hat{x}\|$ in the first 40
epochs given 16 workers and $\alpha=0.5$. Third: progress of $f(x^{k})-f(\hat{x})$
in the first 40 epochs given \textbf{$\beta=0.3$} and different number
of workers.}
\end{figure*}

The first figure plots the effect of different momentum parameters in the distributed environment. It can be seen that both {\dsegd} and {\dsepl} perform better with momentum. Moreover, the performance of {\dsepl} dominate {\dsegd} as we observed in the previous experiments.

\subsubsection{Simulated Environment}
\begin{figure*}[!htb]
\centering
\includegraphics[scale=0.20]{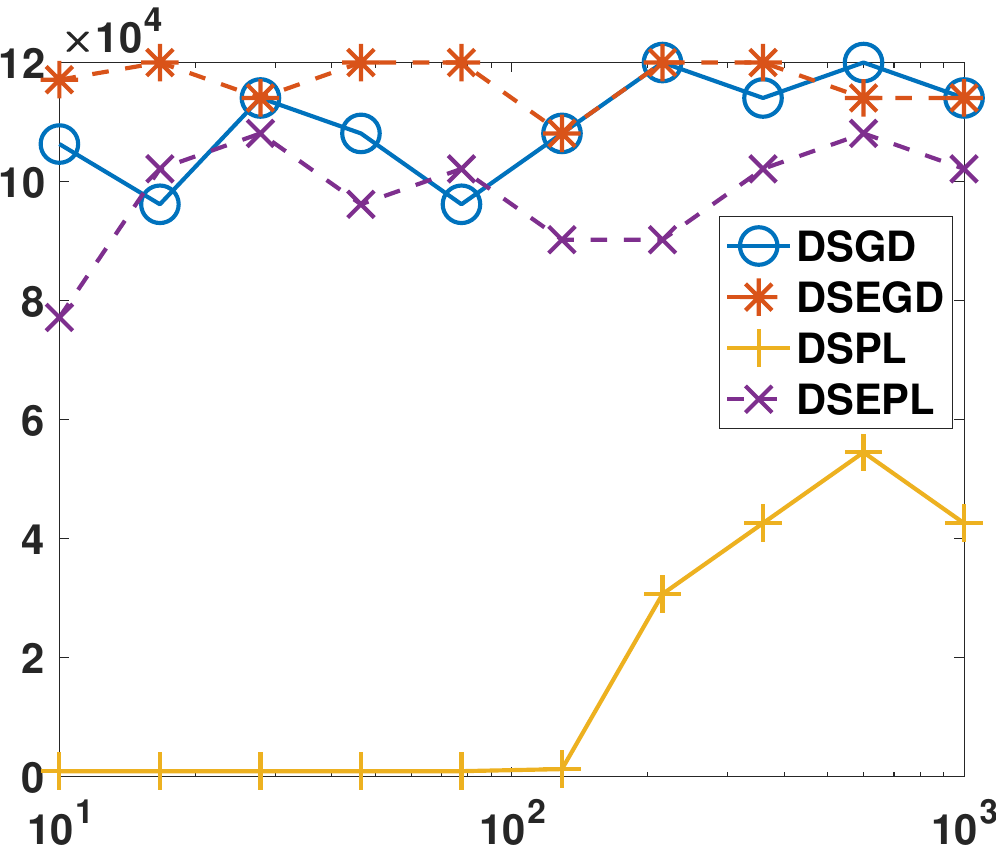}
\includegraphics[scale=0.20]{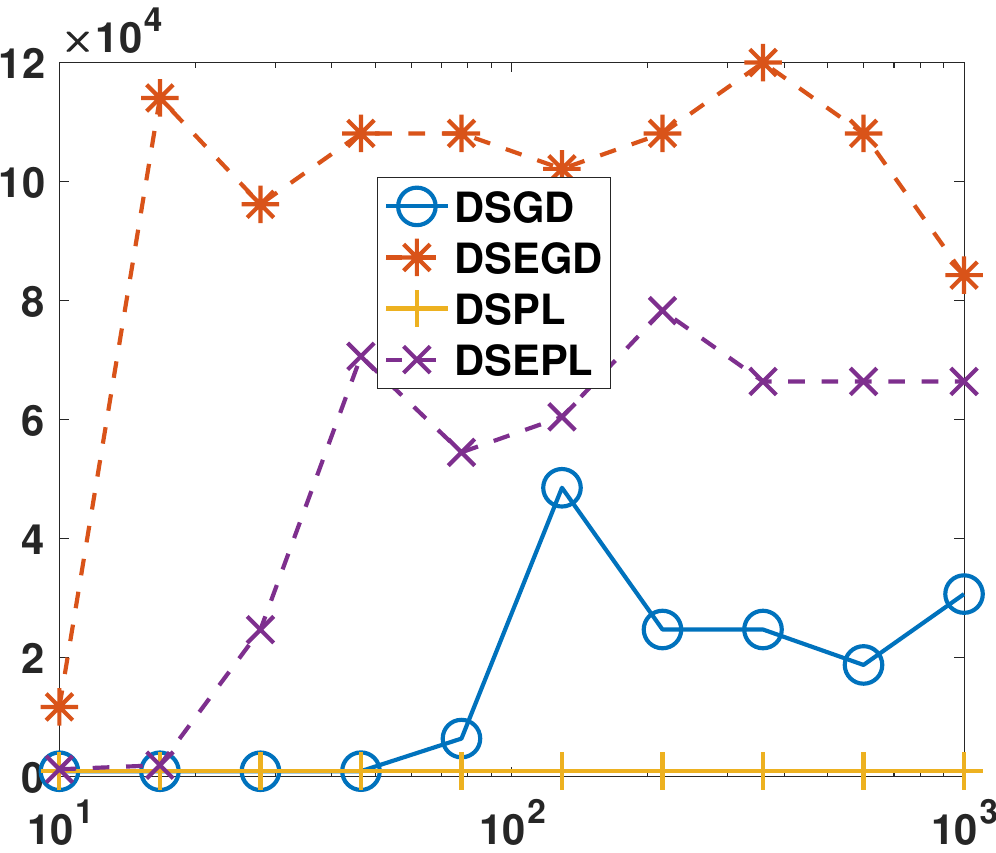}
\includegraphics[scale=0.20]{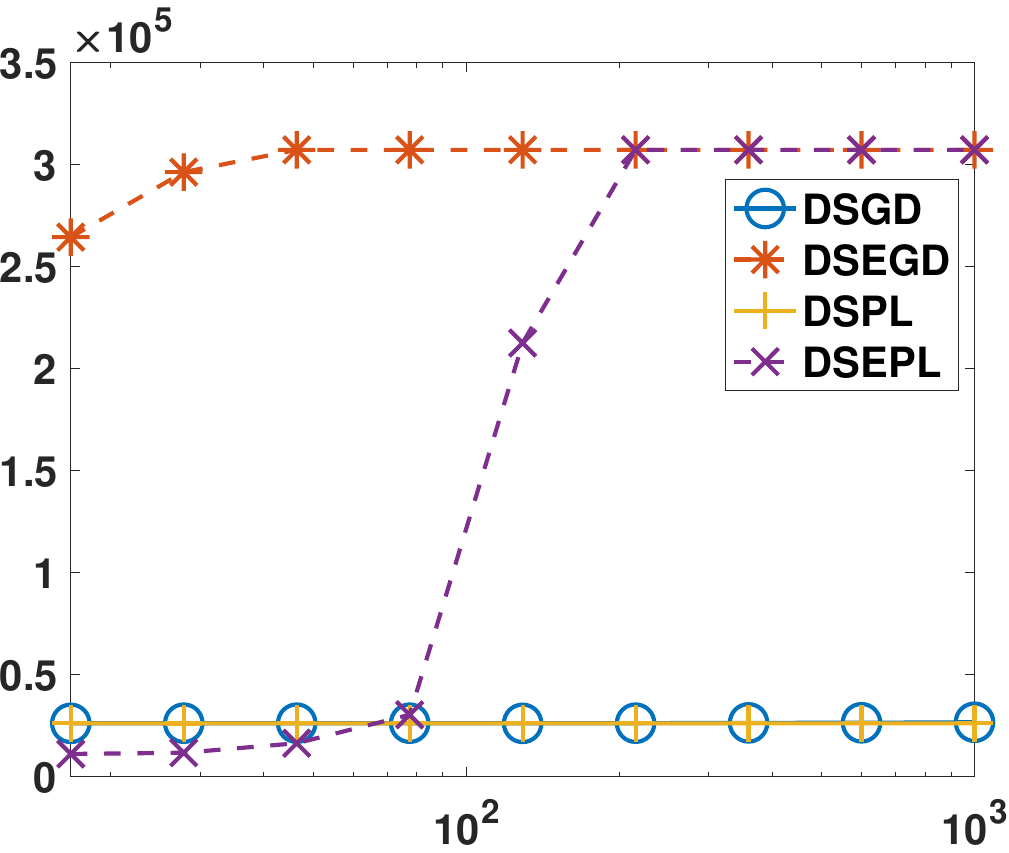}
\includegraphics[scale=0.20]{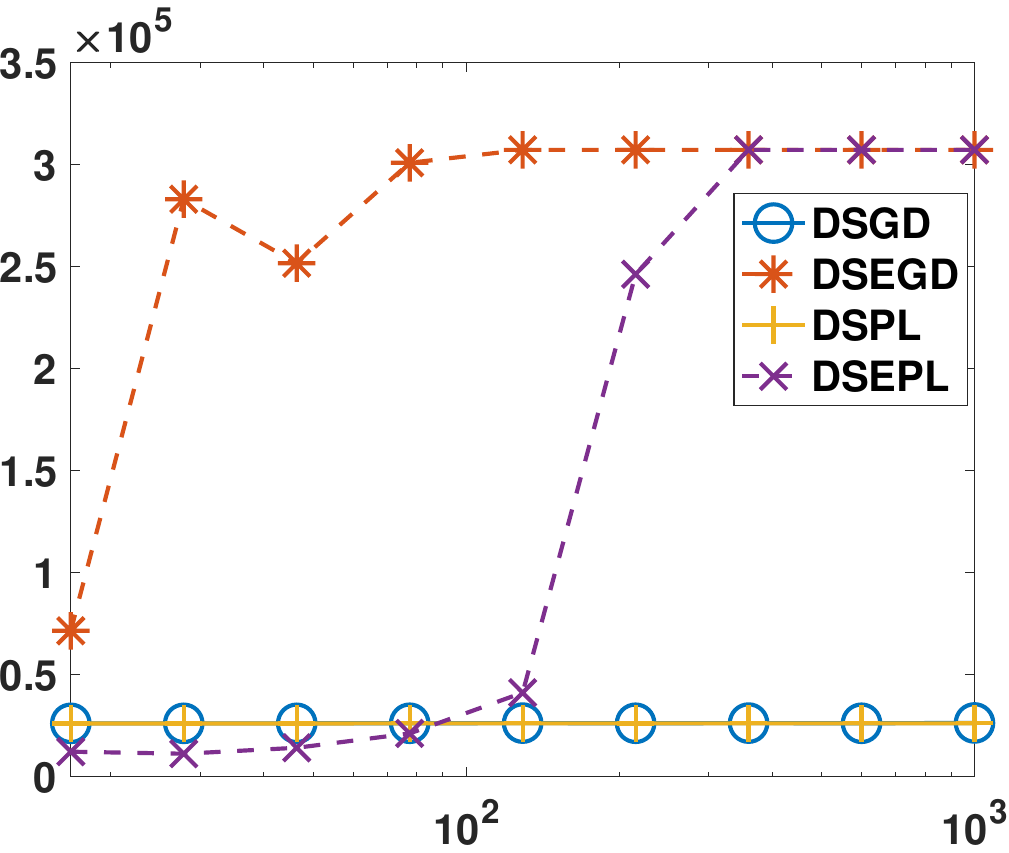}
\caption{\label{app-fig2}Left to right: $(\kappa,p_{\text{{fail}}})=(10,0.3),\alpha=5.0$,
Geometric and Poisson delays; \texttt{zipcode} data of $p_{\text{{fail}}}=0.2$,$\alpha=6.0$,
Geometric and Poisson delays. x-axis represents $\tau_{\text{max}}$ and
y-axis shows average iteration number to reach the stopping criterion over 20 tests.}
\end{figure*}
Figure \ref{app-fig2} plots the impact of staleness on the number of
iterations each algorithm takes to reach the desired accuracy. It
can be seen that with other parameters fixed, {\dspl} tends to be
more robust against delays than the pure subgradient-based methods,
which is consistent with the theoretical results. Moreover, we observe
that when extrapolation is used, the algorithm converges faster at
the cost of less robustness as delay increases.\condspace

\begin{figure*}[!htb]
\begin{centering}
\includegraphics[scale=0.2]{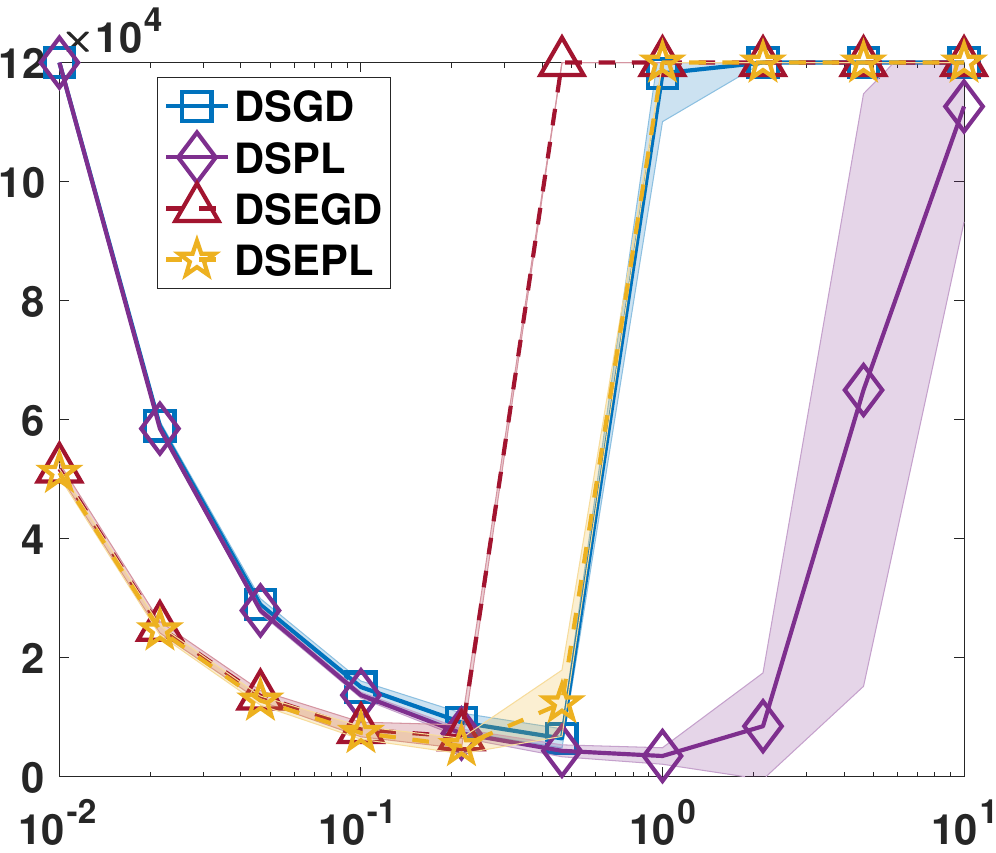}\includegraphics[scale=0.2]{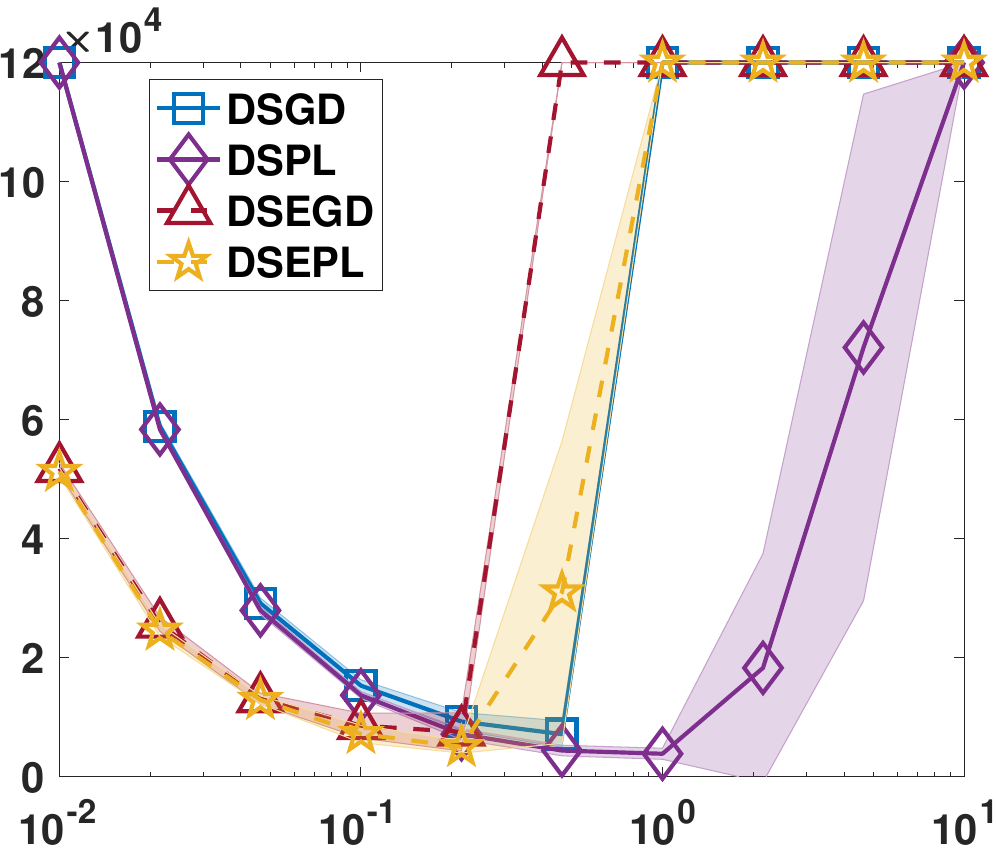}\includegraphics[scale=0.2]{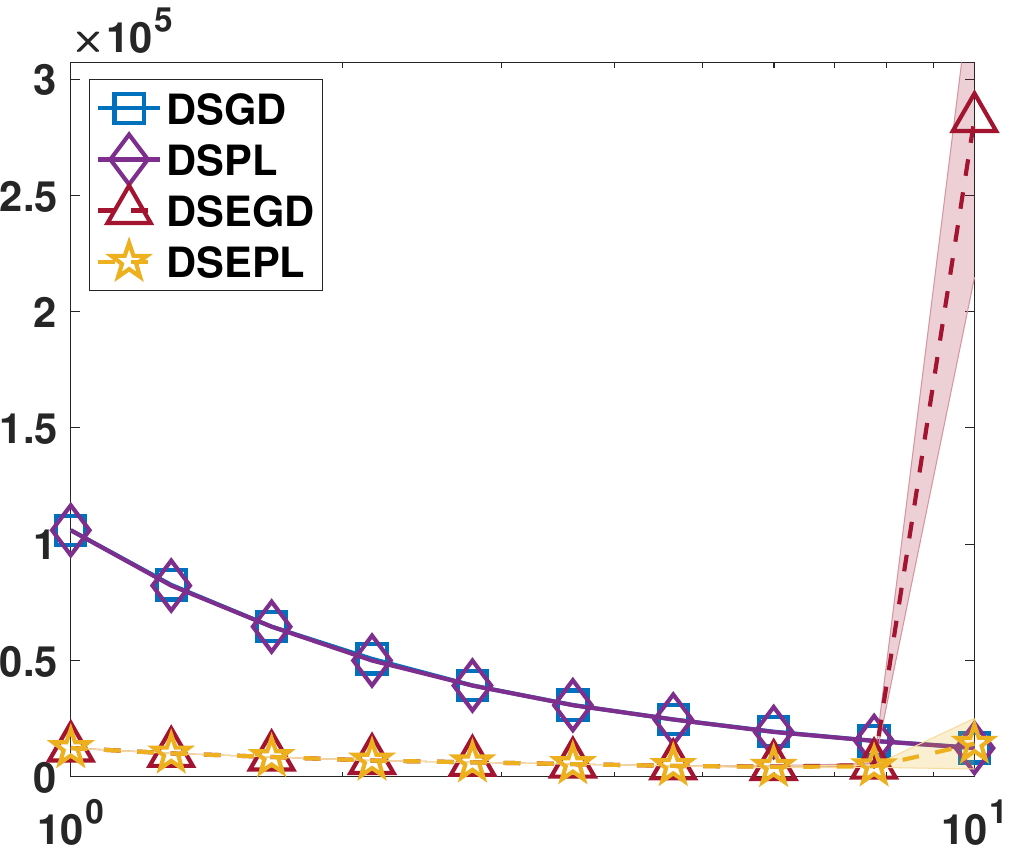}\includegraphics[scale=0.2]{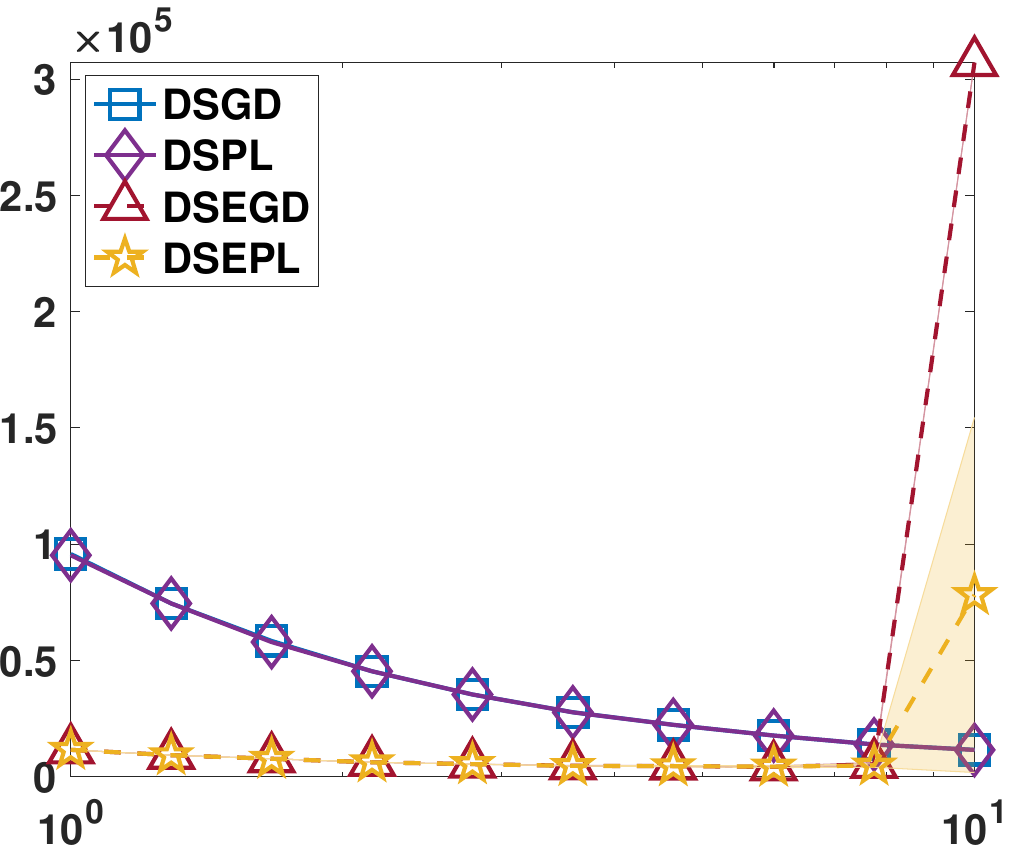}
\par\end{centering}
\begin{centering}
\includegraphics[scale=0.2]{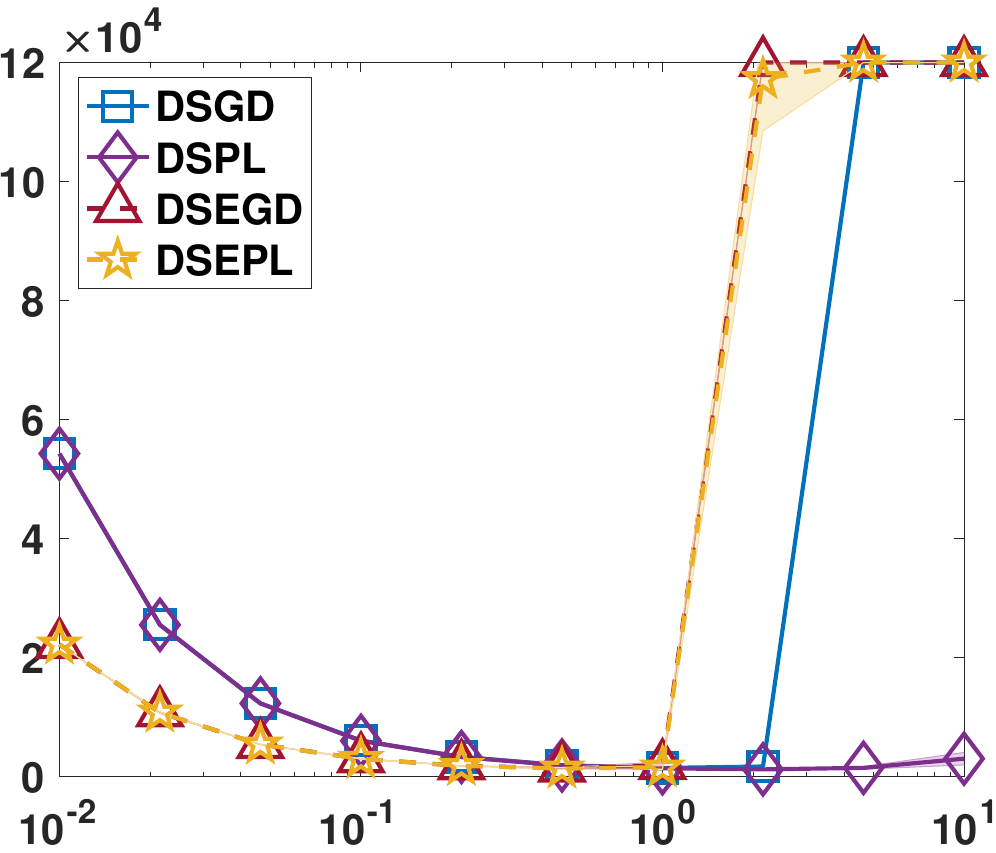}\includegraphics[scale=0.2]{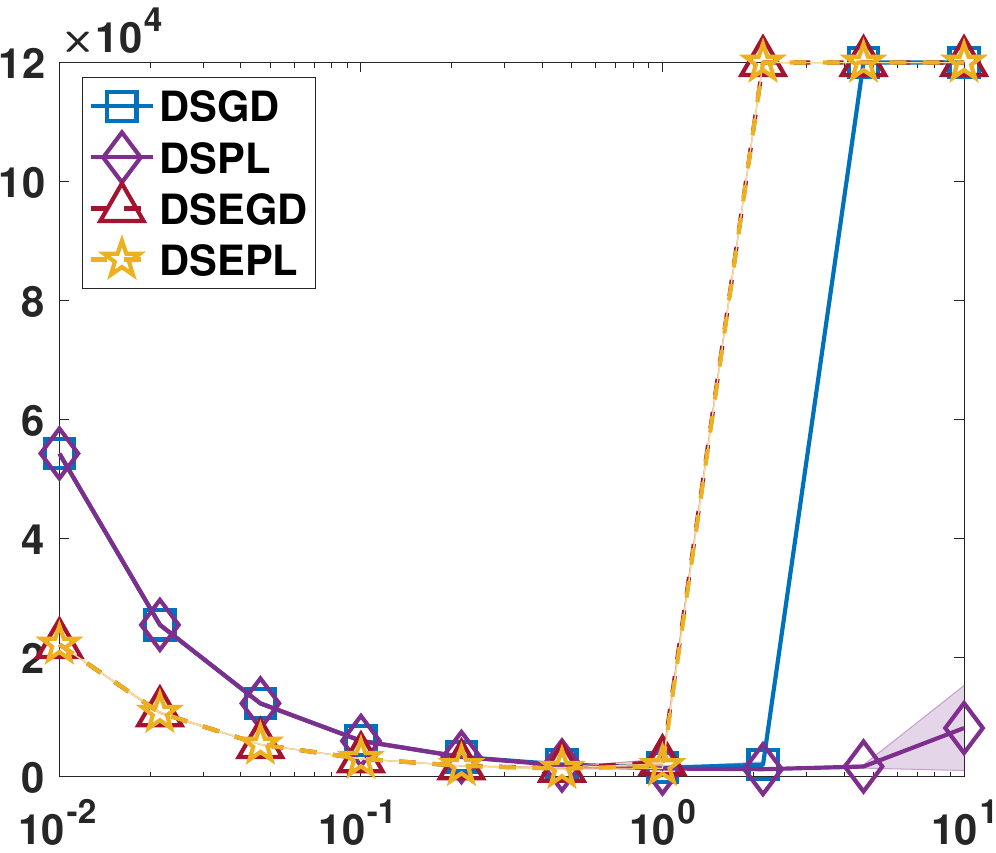}\includegraphics[scale=0.2]{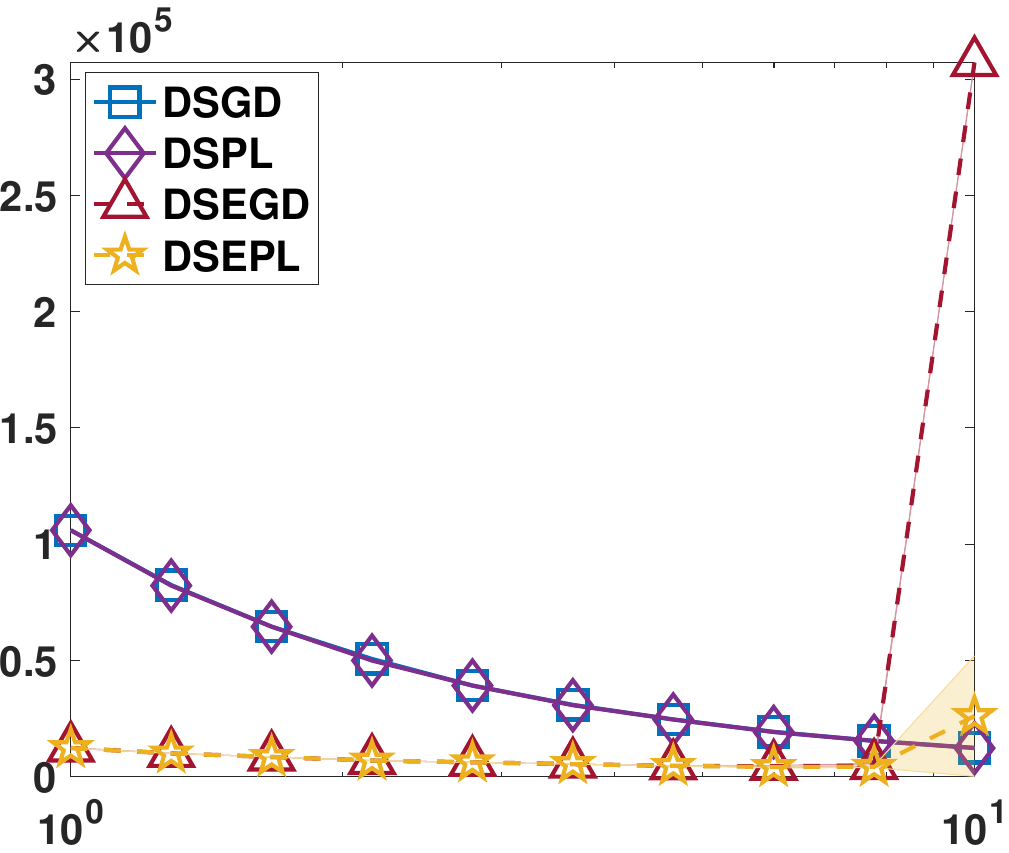}\includegraphics[scale=0.2]{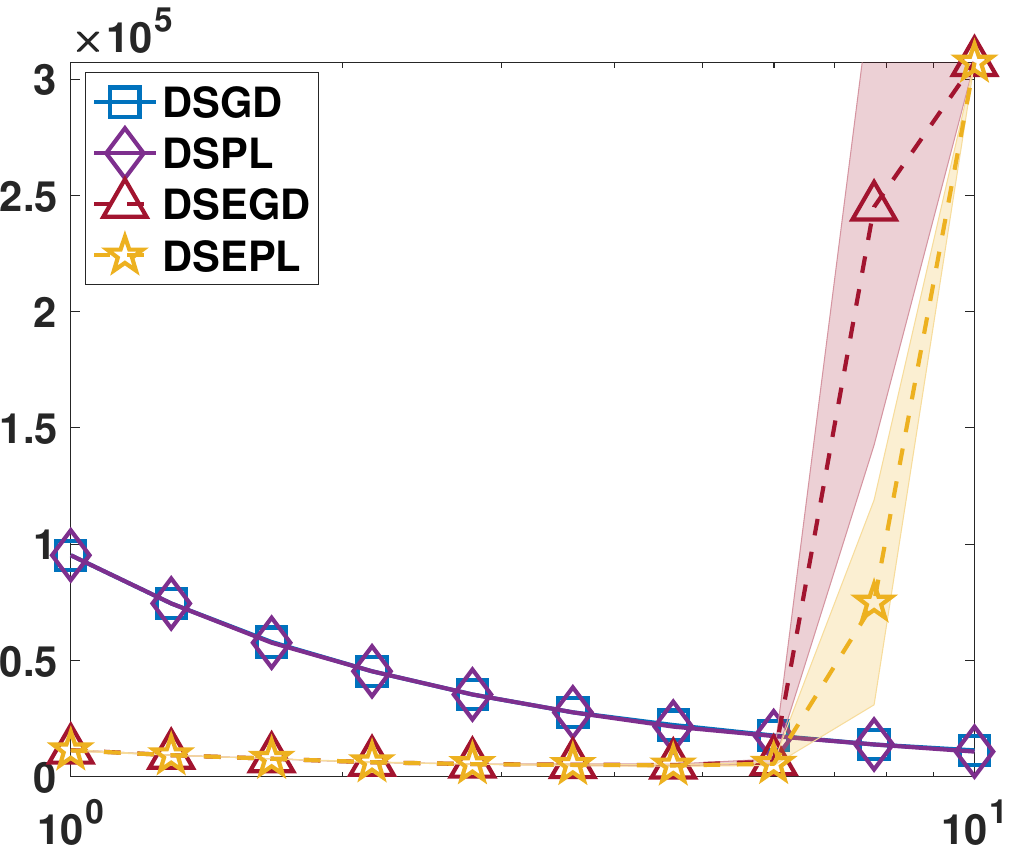}
\par\end{centering}
\caption{\label{app-fig3}First row(Geometric) left: $(\kappa,p_{\text{{fail}}})=(1,0.3),\beta=0.6,\tau\in\left\{ 28,47\right\} $,
right: \texttt{zipcode} $p_{\text{{fail}}}=0.3,\beta=0.9,\tau\in\left\{ 28,600\right\} $.
Second row(Poisson) left: $(\kappa,p_{\text{{fail}}})=(10,0.2),\beta=0.6,\tau\in\left\{ 28,47\right\} $.
right: \texttt{zipcode} $p_{\text{{fail}}}=0.3,\beta=0.9,\tau\in\left\{ 28,600\right\} $.}
\end{figure*}

\myquad Our last experiment investigates the robustness of {\dspl} compared
to {\dsgd} and justifies the use of extrapolation in presence
of delay. Figure~\ref{app-fig3} plots the number of iterations for each
algorithm to converge with different datasets, extrapolation parameters, and delays. In spite of delays, {\dspl} and {\dsepl} still
admit a wider range of stepsize parameters ensuring convergence
than {\dsgd}. Also, when the stepsize is not large, the use
of extrapolation can effectively accelerate the convergence.

\subsection{Blind deconvolution problem}
In this section, we present the additional experiments on blind deconvolution problem to further illustrate the efficiency of {\dspl}. The blind deconvolution problem, unlike robust phase retrieval, aims to recover two signals from their convolution and its mathematical formulation is given by
\[
\min_{x,y\in\mathbb{R}^{n}}\frac{1}{m}\sum_{i=1}^{m}|\langle u_{i},x\rangle\langle v_{i},y\rangle-b_{i}| + \mathbb{I}\{\|x\|, \|y\| \leq \Delta\}, 
\]
where $\{b_i\}$ are the measurements, $\{(u_i, v_i)\}$ are the measuring data and $x,y$ are the optimization variables corresponding to the signals. Similar to phase retrieval, we first present the detailed setup and then inspect the performance of {\dspl} in both real and simulated asynchronous environments.

\subsubsection{Experiment setup}

\textbf{Data generation.} We use synthetic data for blind deconvolution problems.

\textbf{Synthetic data}. We take $m=300,n=100$
in the experiments of simulated delay and $m=1500,n=150$ in the asynchronous
environment. The data is generated similar to phase retrieval,
where, given some conditioning parameter $\kappa\geq1$, we compute
$U=Q_1D_2, V=Q_2D_2,Q\in\mathbb{R}^{m\times n},q_{ij}\sim\mathcal{N}(0,1)$ and
$D=\tmop{\text{{diag}}}(d),d\in\mathbb{R}^{n},d_{i}\in[1/\kappa,1],\forall i$.
Then two true signals are generated like $\hat{x}$ in phase retrieval and we random corruption is applied.

\begin{enumerate}[label=\textbf{\arabic*)}, ref=Ex\arabic*, leftmargin=*]
\item \textbf{Dataset}. In the asynchronous environment, we set $\kappa=1,p_{\text{fail}}=0$
and in the simulated environment, we set $\kappa=1$ and $p_{\text{fail}}\in\{0.2,0.3\}$.
\item \textbf{Initial point and radius}. We generate $x', y'\sim\mathcal{N}(0,I_{n})$ and start from $x^{0}=x^{1}=\frac{x'}{\|x'\|}, y^{0}=y^{1}=\frac{y'}{\|y'\|}$. $\Delta = 1000\|(x^0, y^0)\| $.
\item \textbf{Stepsize}. We tune the stepsize parameter setting $\gamma=\sqrt{K/\alpha}$,
where $\alpha\in\{0.1,0.5,1.0\}$ in the asynchronous environment,
$\alpha\in[10^{-2},10^{1}]$ for synthetic data in the simulated environment.
\end{enumerate}

The rest of the experiment setup are consistent with in phase retrieval.

\subsubsection{Asynchronous environment}

\begin{figure}[h]
\centering{}\includegraphics[scale=0.25]{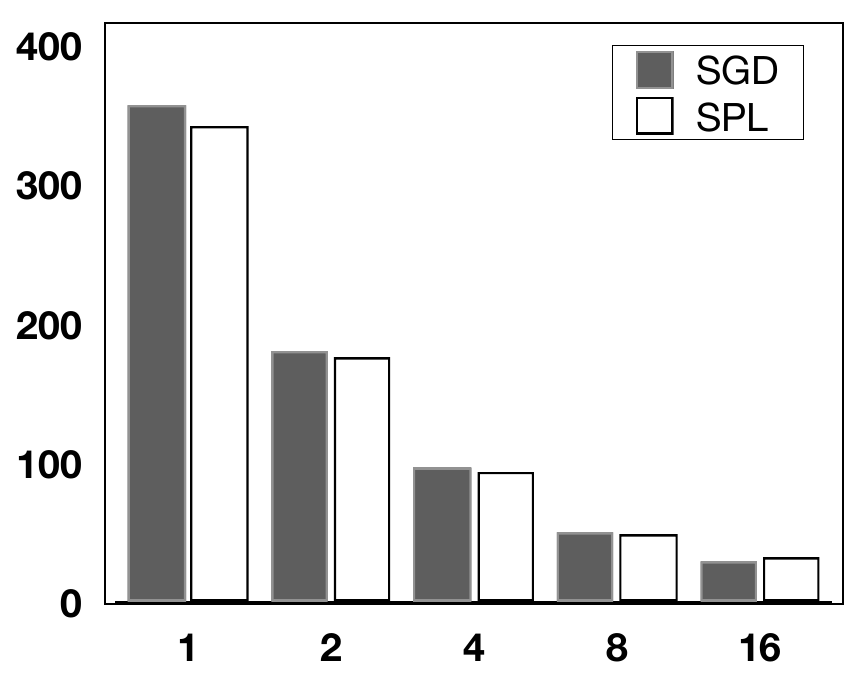}\qquad{}\includegraphics[scale=0.21]{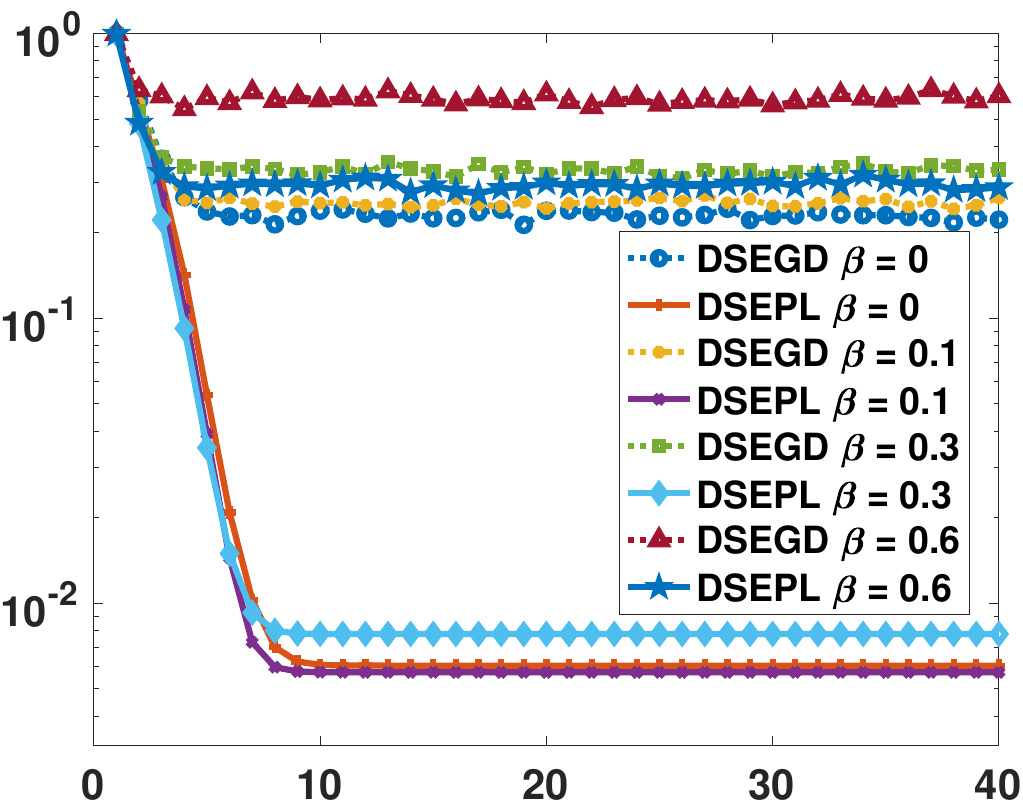}\qquad{}\includegraphics[scale=0.21]{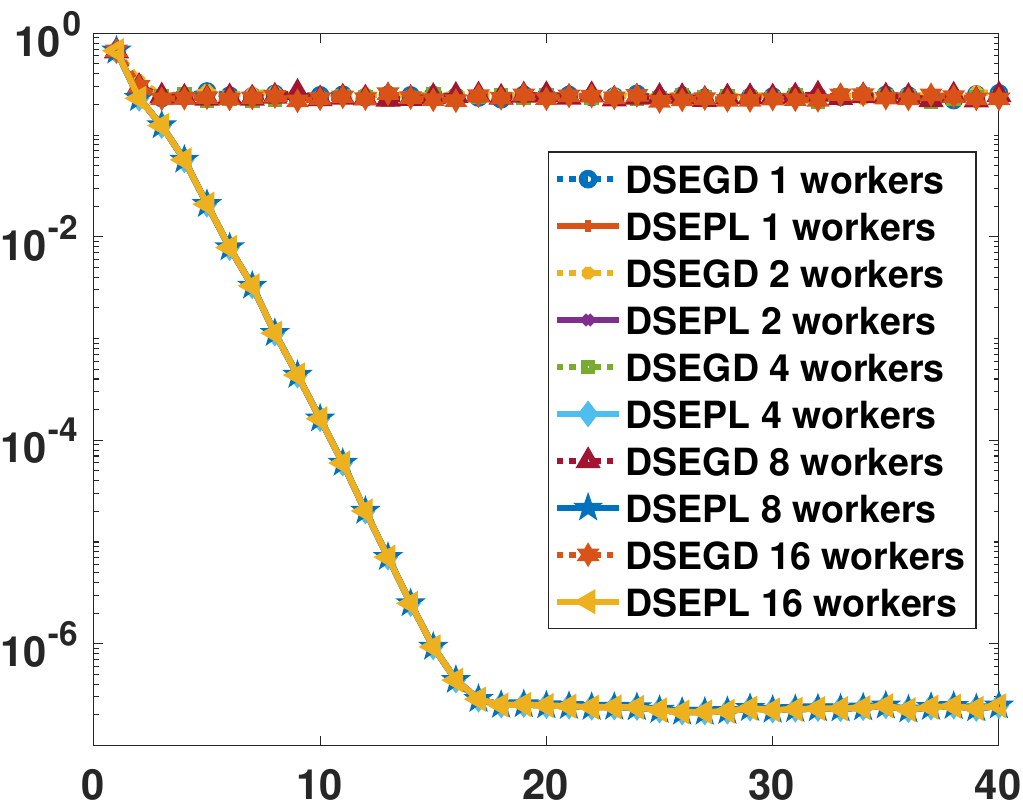}\caption{First: speedup in time and the number
of workers. Second: progress of $\|(x^{k}, y^k)-(\hat{x}, \hat{y})\|$ in the first 40
epochs given 16 workers and $\alpha=0.5$. Third: progress of $f(x^{k}, y^k)-f(\hat{x}, \hat{y})$
in the first 40 epochs given \textbf{$\beta=0.1$} and different number
of workers.}
\end{figure}

The experiments for blind deconvolution again justifies the effect of {\dspl} in its convergence behavior and robustness to the stochastic delays. By retaining the smooth structure of the inner composite function, {\dspl} gives a more accurate approximation of the original objective function only at cost of transmitting one more scaler (i.e., the inner objective value). Hence {\dspl} serves as a competitive alternative to {\dsgd}  when the problem enjoys composite structure \eqref{eq:composite} in the distributed setting.

\subsubsection{Simulated environment}

\begin{figure}[h]
\centering
\includegraphics[scale=0.20]{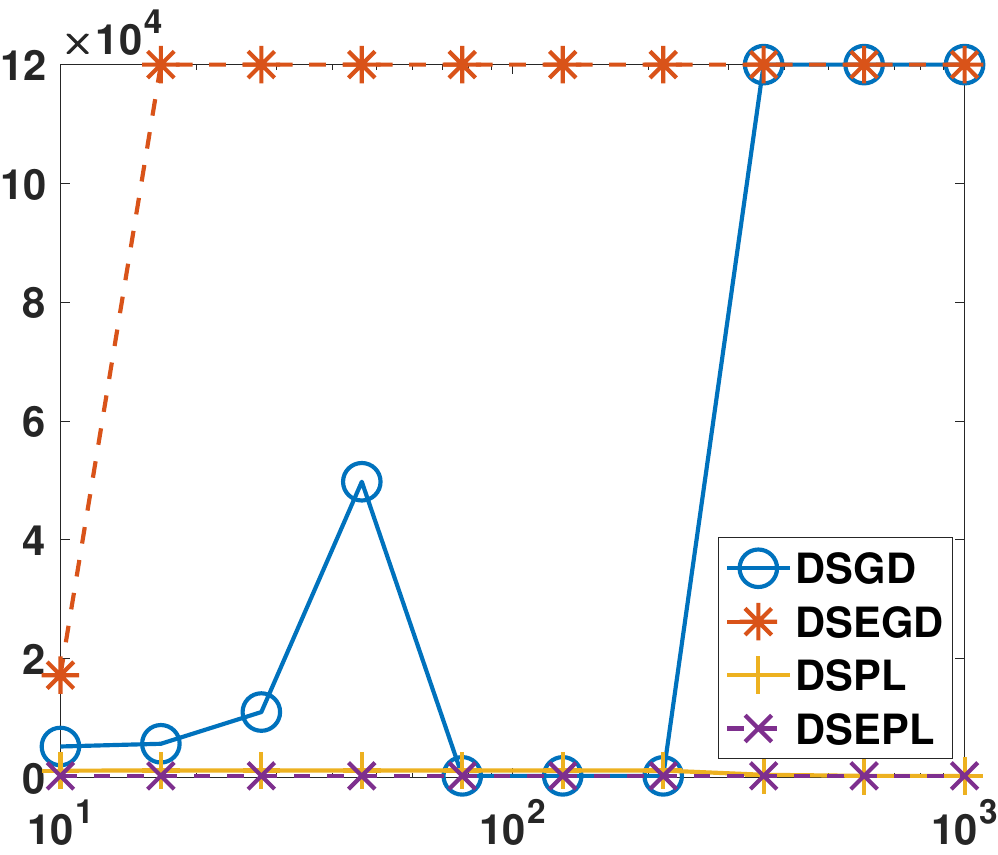}
\includegraphics[scale=0.20]{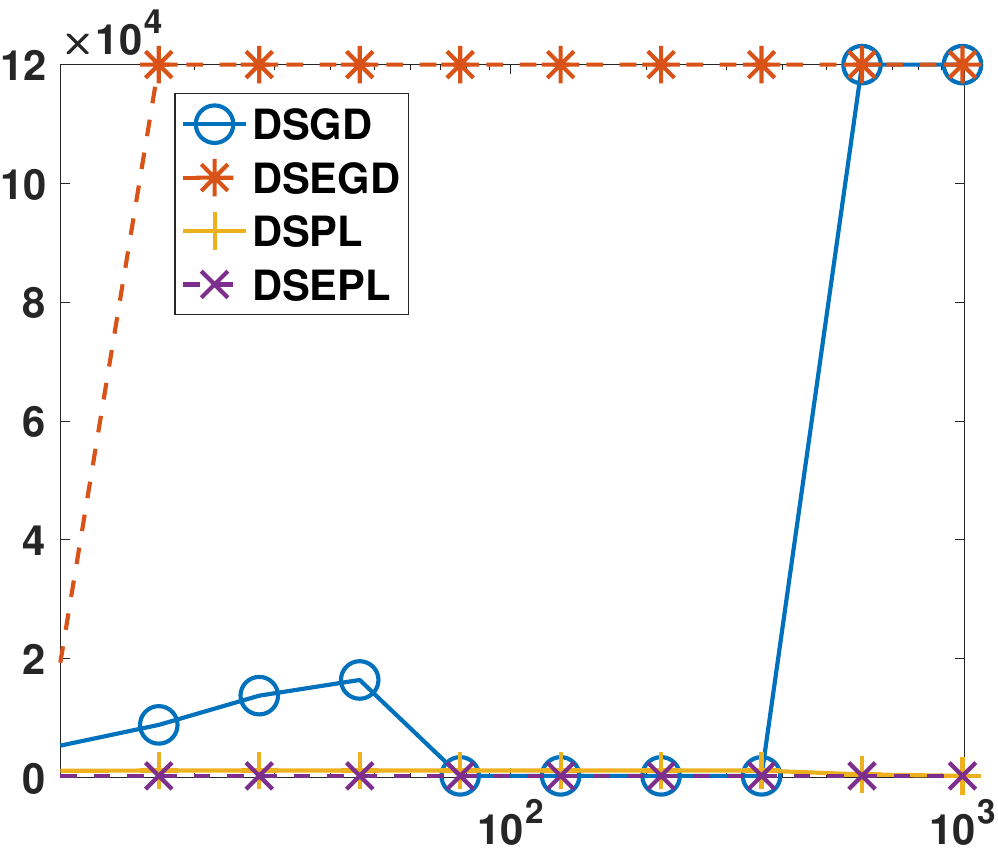}
\includegraphics[scale=0.20]{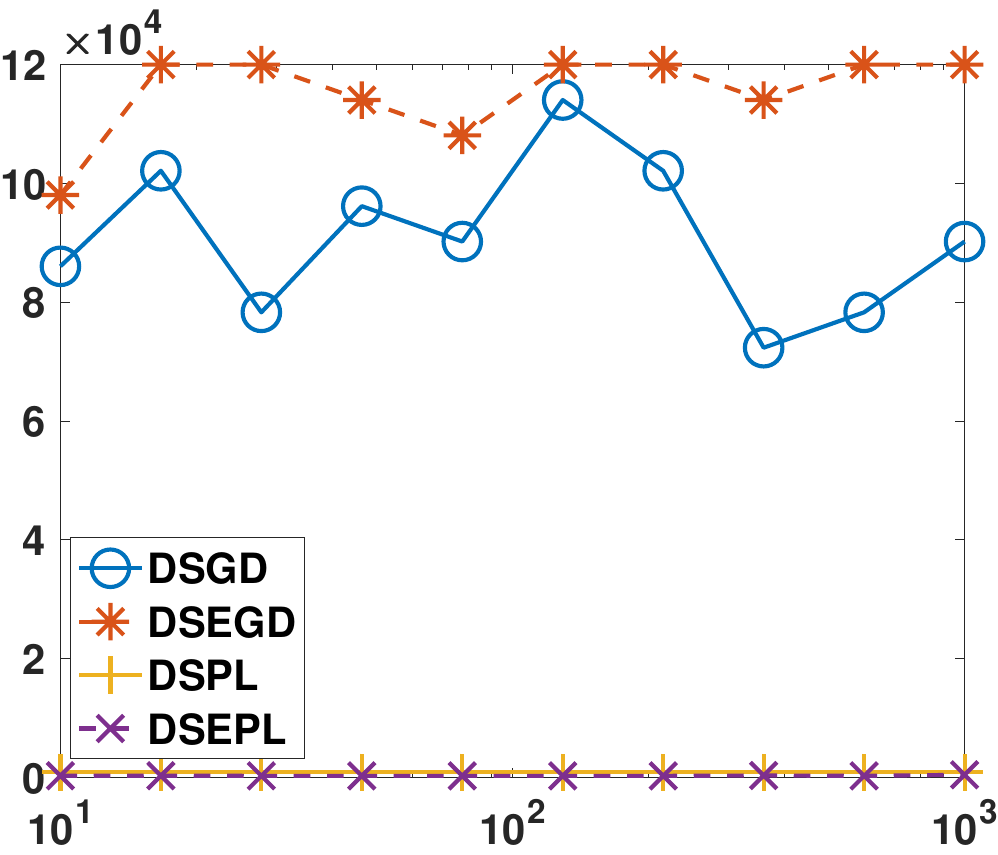}
\includegraphics[scale=0.20]{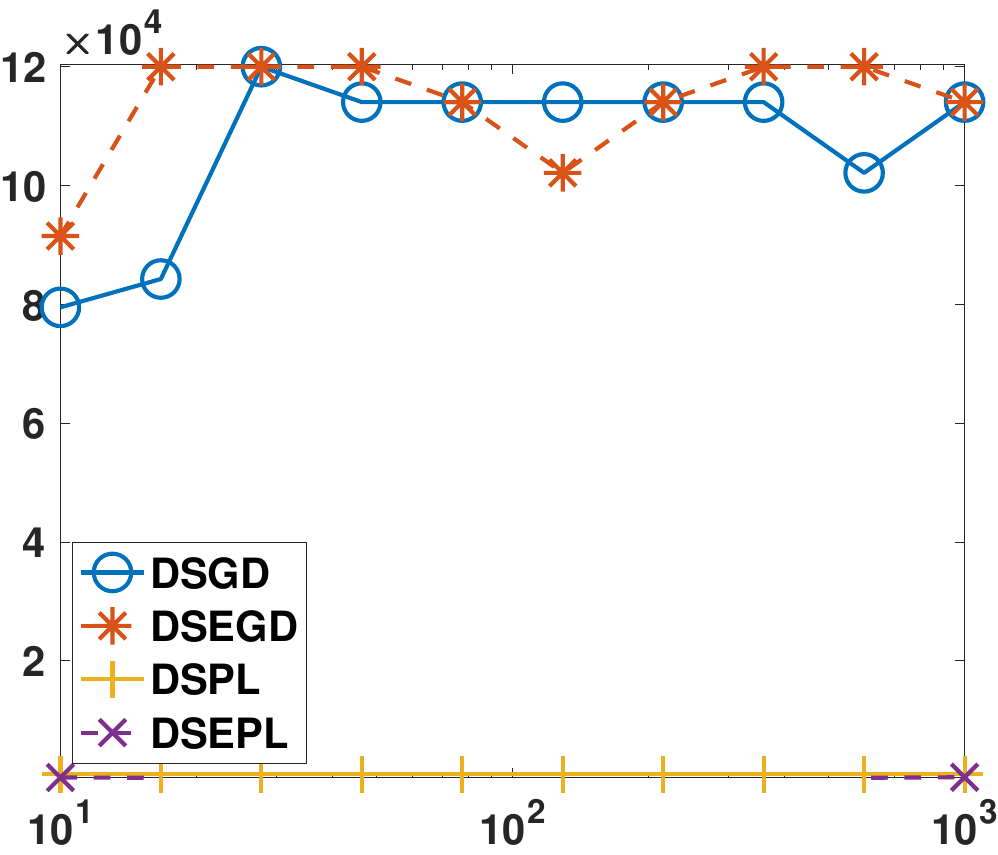}\caption{\label{fig5}From left to right: $(\kappa,p_{\text{{fail}}})=(1,0.2),\alpha=4.6$,
Geometric and Poisson delays; $(\kappa,p_{\text{{fail}}})=(1,0.3)$,$\alpha=2.1$,
Geometric and Poisson delays. The x-axis represents delay $\tau$ and the
y-axis gives the number of iterations to reach the stopping criterion.}
\end{figure}
\begin{figure}[h]
\begin{centering}
\includegraphics[scale=0.20]{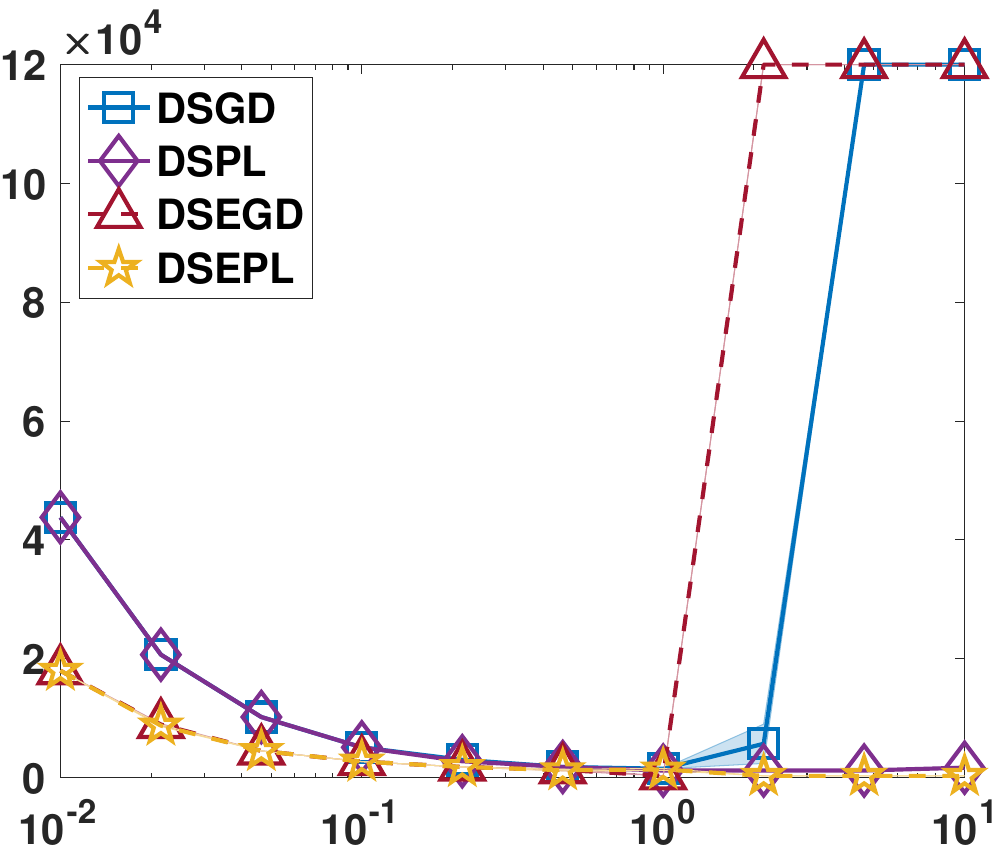}\includegraphics[scale=0.20]{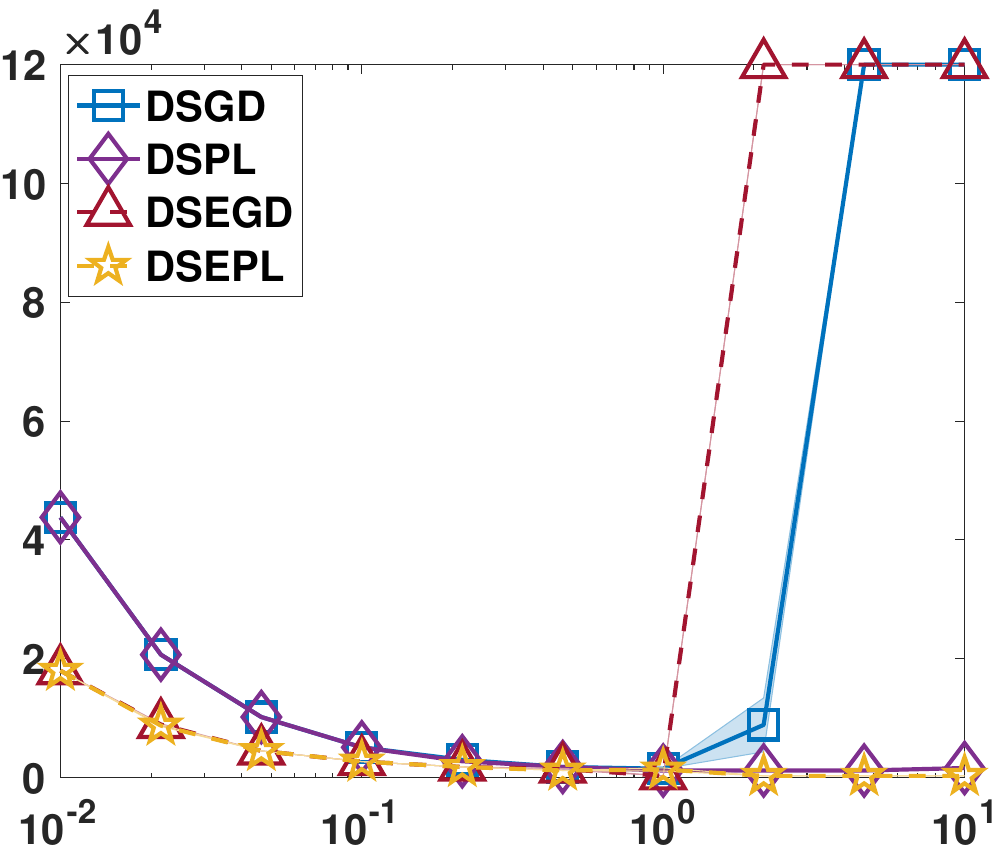}
\includegraphics[scale=0.20]{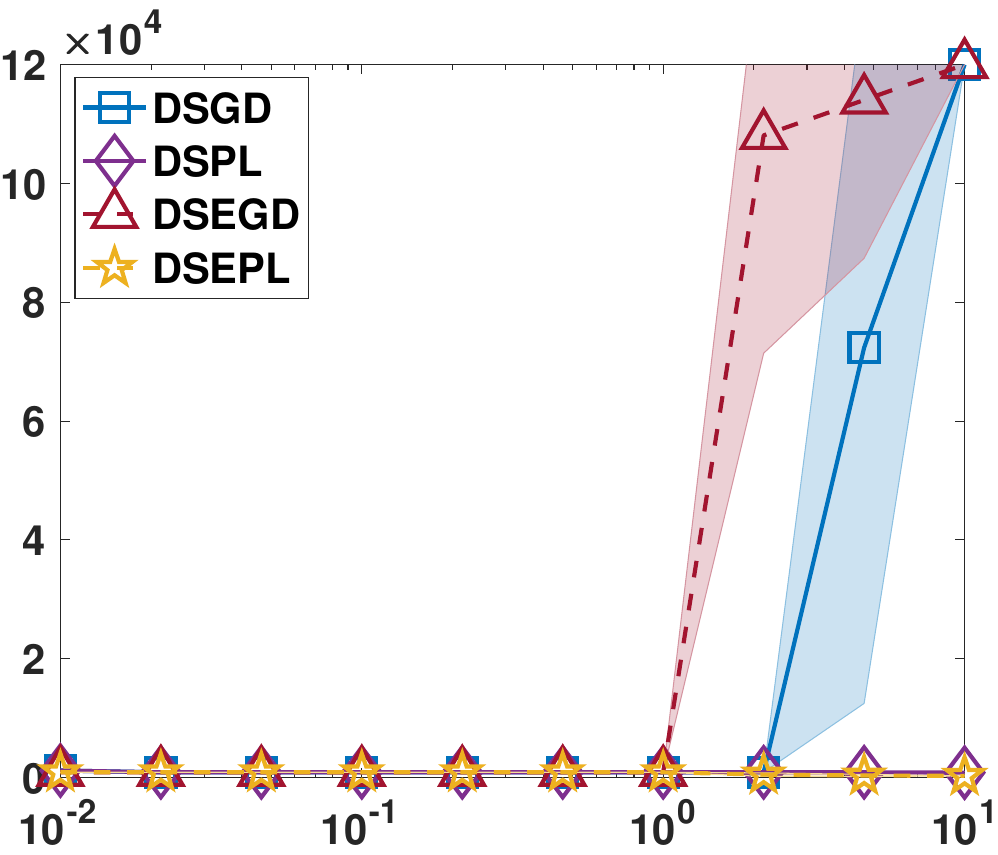}\includegraphics[scale=0.20]{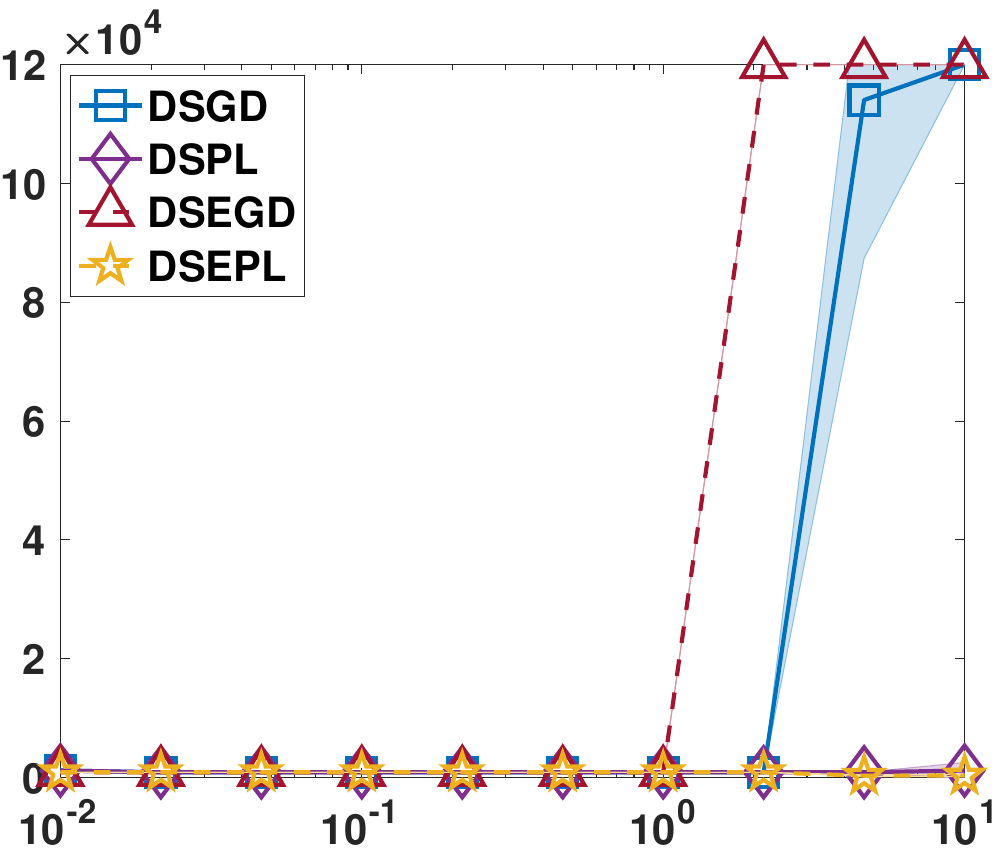}
\par\end{centering}
\caption{\label{fig6} From left to right: $(\kappa,p_{\text{{fail}}})=(1,0.2)$, Geometric and Poisson delays with $\tau=10$; $(\kappa,p_{\text{{fail}}})=(1,0.3)$, Geometric and Poisson delays with $\tau=216$.}
\end{figure}

In the simulated experiments for blind deconvolution, we can see similar robustness of {\dspl} against delay and stepsize selection, which confirms our previous observations.

\subsection{Proximal sub-problems}
This subsection shows how the sub-problems from {\dspl} and {\dspl} are solved for completeness. The results are a direct adaptation from \citeapp{davis2019stochasticdup} and interested readers can check \citeapp{davis2019stochasticdup} for the detailed derivations.
\paragraph{Phase retrieval problem}
In phase retrieval problem, we denote $x$ to be the point where the stochastic model function is constructed and $y$ to be the center of the proximal term. Then the {\dspl} proximal subproblem is given by
\[
\min_x\ \left| \langle a, z \rangle^2 + 2
   \langle a, z \rangle \langle a,  x - z \rangle - b \right| + \frac{\gamma}{2} \| x - y \|^2 
\]

and it admits closed-form solution when away from the boundary

\[ x^{+} = y + \text{Proj}_{[- 1, 1]} ( -
   \tfrac{\delta}{\| \zeta \|^2} ) \zeta,
\]
where $\delta={\gamma}^{-1}(\langle a, z \rangle^2 + 2 \langle a, z \rangle
\langle a,  x - z \rangle - b)$ and 
$ \zeta = 2 \gamma^{-1}\langle a, z \rangle a $ and $\text{Proj}_{[- 1, 1]}(\cdot)$ denotes projection onto the box $[-1, 1]$.

\paragraph{Blind deconvolution problem}

\revised{In the blind deconvolution problem, we use $z=(z_x, z_y)$ to denote the point of model construction and $w=(w_x, w_y)$ to denote the proximal center. Then the subproblem is given by
\[
\min_{ \|x\|, \|y\| \leq \Delta}\  | \langle u, z_x \rangle \langle v,  z_y\rangle + \langle v, z_y \rangle \langle u, x - z_x \rangle + \langle u, z_x \rangle \langle v, y - z_y \rangle - b | + \frac{\gamma}{2} [\| x - w_x \|^2 + \| y - w_y \|^2].
\]
First we assume that the constraints are inactive, then the solution is available in closed form
\[
w^{+} = w + \text{Proj}_{[- 1,
   1]} ( - \tfrac{\delta}{\| \zeta \|^2} ) \zeta,
\]
where
\begin{flalign*}
\delta & = {\gamma}^{-1} \left[\langle u, z_x \rangle \langle v, z_y \rangle + \langle v, z_y \rangle
\langle u, w_x - z_x \rangle + \langle u, z_x \rangle \langle v, w_y - z_y \rangle - b \right]\\
\zeta & = {\gamma}^{-1} (
  \langle v, z_y \rangle u, \langle u, z_x \rangle v ).
\end{flalign*}}

\subsection{Separate figures of Section \ref{sec:Experiments}}
\centering
\begin{figure}[h]
\centering
	\includegraphics[scale=0.185]{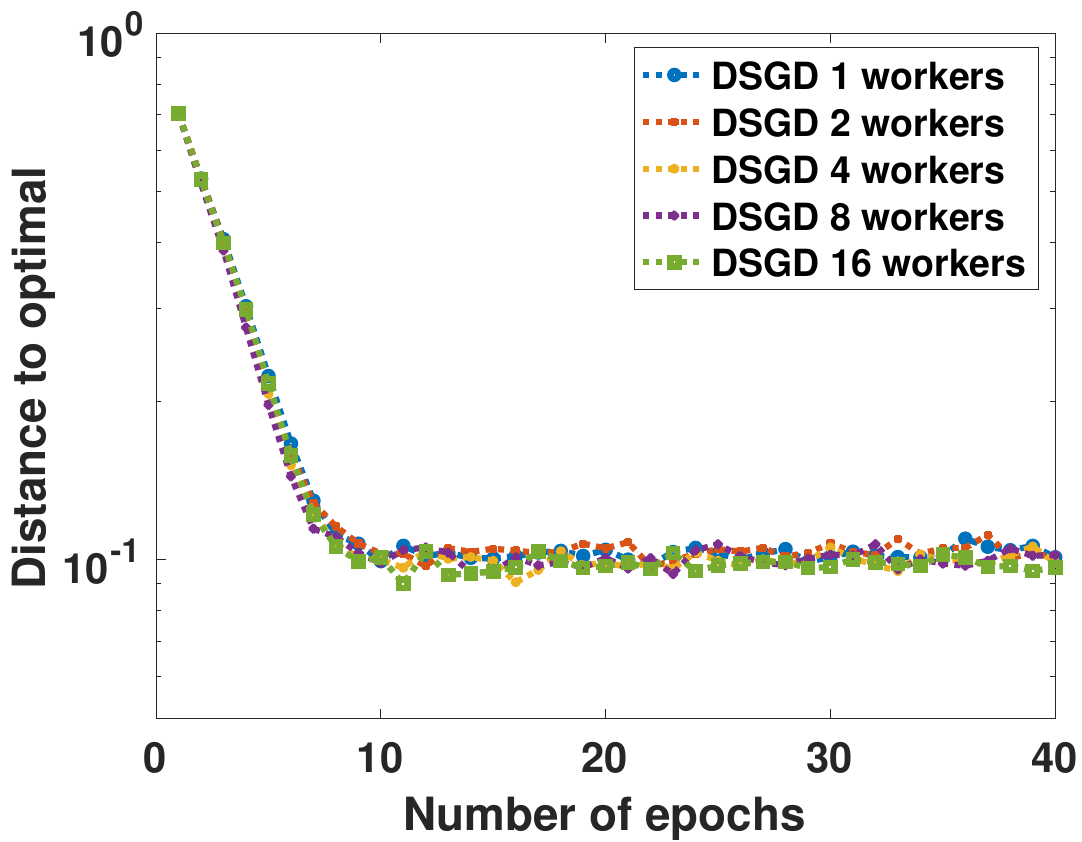}
	\includegraphics[scale=0.185]{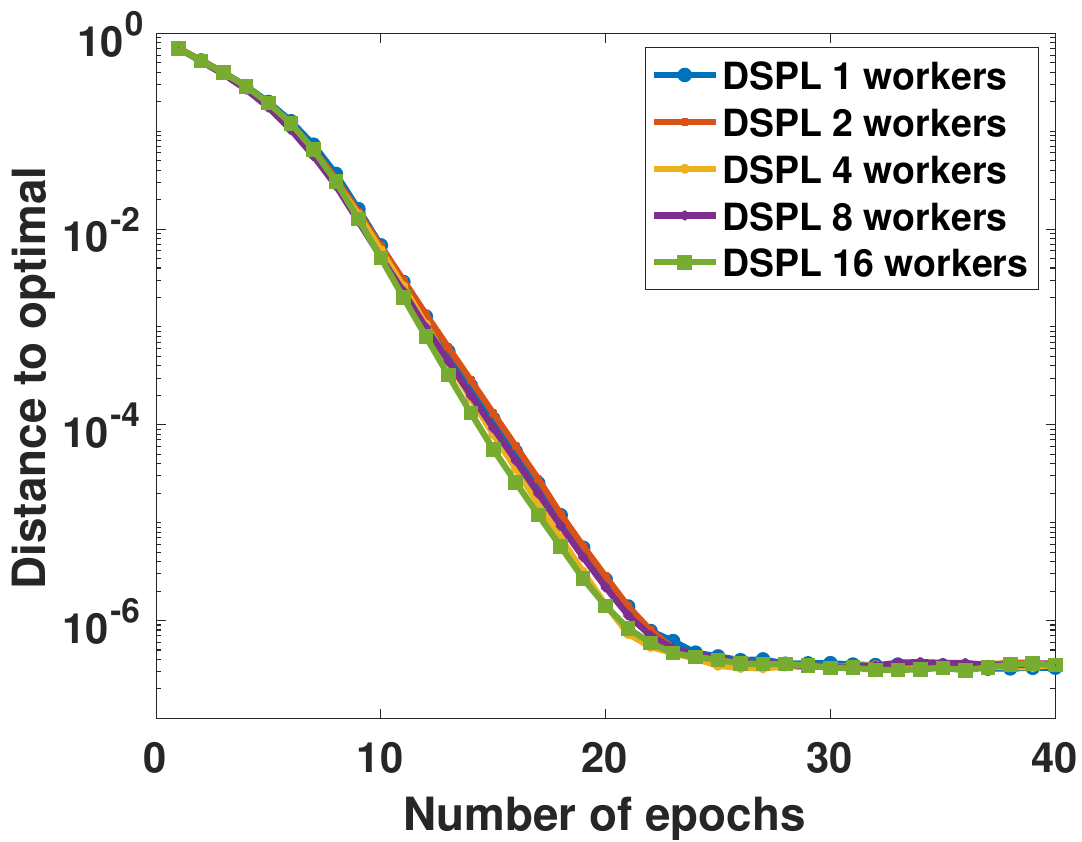}
	\includegraphics[scale=0.185]{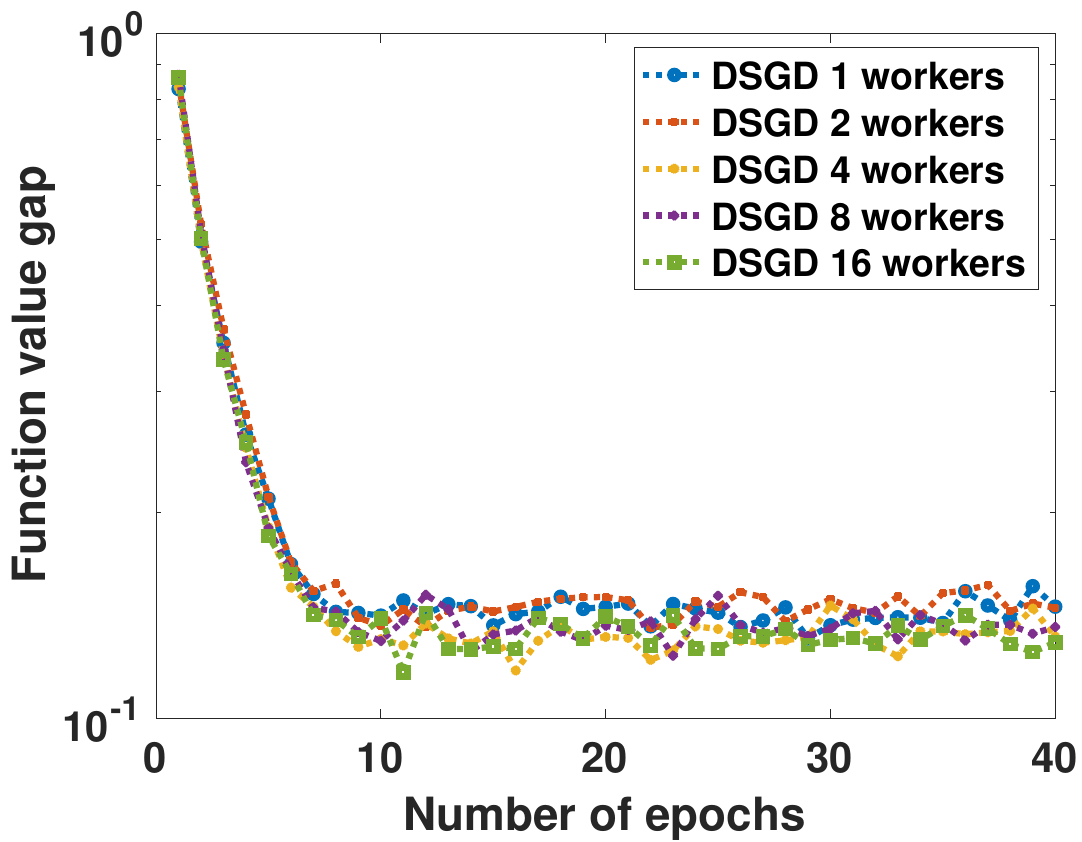}
	\includegraphics[scale=0.185]{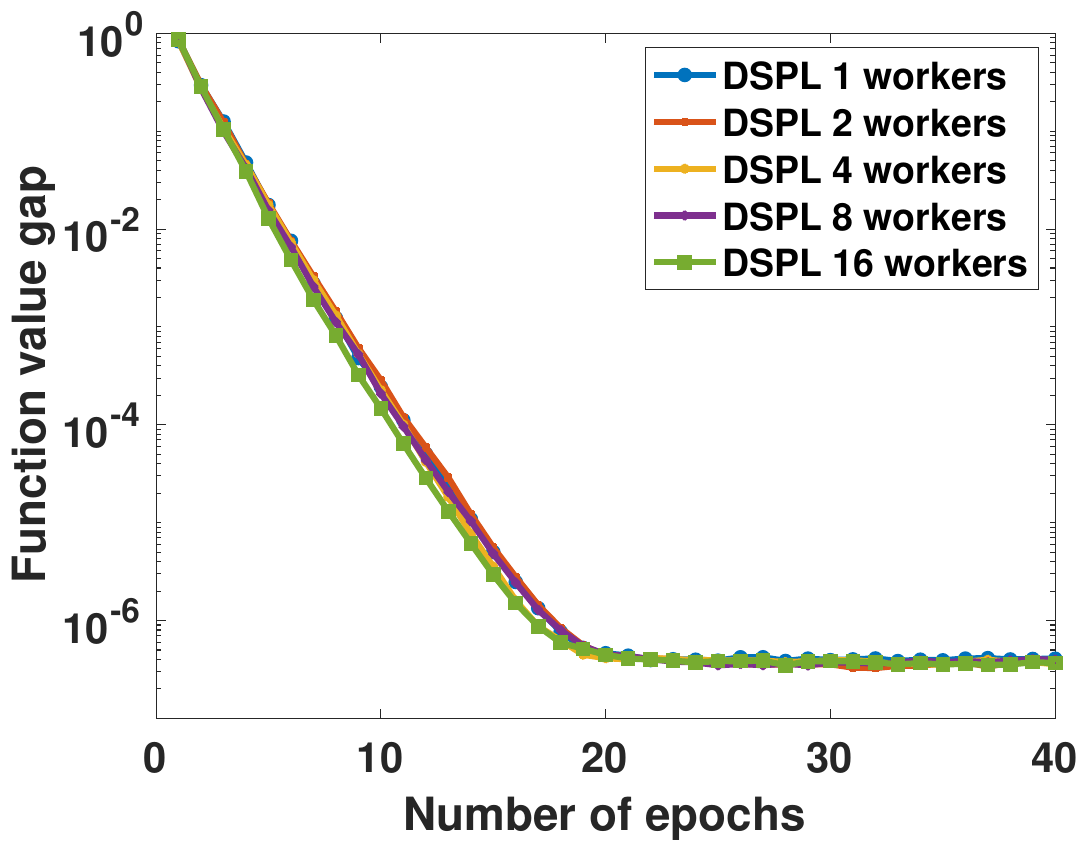}
	\caption{Figure \ref{fig1} after splitting {\dsgd} and {\dspl}. Two plots on the left: $\|x^k - \hat{x}\|$ in the first 40 epochs given $\alpha = 0.1$; two on the right: $f(x^k) - f(\hat{x})$ in the first 40 epochs given $\alpha = 0.5$. \label{fig1-split}}
\end{figure}

 \bibliographystyleapp{plain}
 \bibliographyapp{ref.bib}

\end{document}